\documentclass{amsart}
\usepackage{fullpage}
\usepackage{amsmath,amssymb}
\usepackage[dvipsnames]{xcolor}
\usepackage{tikz}
\usepackage{float}
\usepackage[colorlinks=true, linkcolor=blue, citecolor=green, urlcolor=cyan]{hyperref}

\newtheorem*{theor*}{Theorem}
\newtheorem*{prop*}{Proposition}

\newtheorem{theor}{Theorem}[section]

\newtheorem{lemma}[theor]{Lemma}
\newtheorem{prop}[theor]{Proposition}
\newtheorem{cor}[theor]{Corollary}
\newtheorem{problem}[theor]{Problem}
\newtheorem{conjecture}[theor]{Conjecture}
\newtheorem{definition}[theor]{Definition}
\newtheorem{remark}[theor]{Remark}

\newcommand{\N}{\mathbb{N}}
\newcommand{\R}{\mathbb{R}}

\newcommand{\Exp}{\mathbb{E}}

\newcommand{\smin}{s_{\min}}

\newcommand{\good}[1]{#1_{\mathrm{good}}}
\newcommand{\flowdown}{\vcenter{\hbox{\tikz{\draw[->,double distance=1.1pt,line width=0.35pt] (0,0.58) -- (0,-0.58);}}}}
\newcommand{\keyterm}[1]{\textcolor{PineGreen}{\emph{#1}}}
\newcommand{\lsfci}{\hyperref[def:local-sparse-four-cycle-incidence]{\keyterm{local sparse four-cycle incidence}}}
\newcommand{\lsdoubleoverlap}{\hyperref[ass:local-sparse-double-overlap]{\keyterm{local sparse double-overlap condition}}}

\newcommand{\doubleoverlappartners}{\hyperref[def:double-overlap-partner]{\keyterm{double-overlap partners}}}
\newcommand{\fcp}{\hyperref[def:local-sparse-four-cycle-incidence]{\keyterm{four-cycle partner}}}
\newcommand{\overlapparameter}{\hyperref[def:overlap-parameter]{\keyterm{overlap parameter}}}
\newcommand{\reservoir}{\hyperref[def:c4-witness-block]{\keyterm{reservoir}}}
\newcommand{\witnessblock}{\hyperref[def:c4-witness-block]{\keyterm{witness block}}}
\newcommand{\light}{\hyperref[def:heavy-neighbor-light-root]{\keyterm{light}}}
\newcommand{\mrt}[1]{\hyperref[def:rooted-matching-tree]{\keyterm{$#1$-matching rooted tree}}}
\newcommand{\wcmrt}[1]{\hyperref[def:weight-compatible-rooted-matching-tree]{\keyterm{weight-compatible $#1$-matching rooted tree}}}
\newcommand{\ff}[2]{\hyperref[eq:falling-factorial]{\textcolor{black}{(#1)_{#2}}}}
\newcommand{\tref}[1]{\ifmmode\text{\tiny\ref{#1}}\else{\tiny\ref{#1}}\fi}

\title{Well-invertible column subsets of sparse matrices
are rare}
\author{Han Huang}
\author{Mark Rudelson}
\author{Konstantin Tikhomirov}

\address{Department of Mathematics, University of Missouri, Columbia}\email{hhuang@missouri.edu} 

\address{Department of Mathematics, University of Michigan}\email{rudelson@umich.edu} 

\address{Department of Mathematical Sciences, Carnegie Mellon University}\email{ktikhomi@andrew.cmu.edu}

\begin{document}

\begin{abstract}
A random $n\times k$ matrix $S$ is an \emph{$(r,\alpha)$-oblivious subspace injection} (OSI) if 
$\Exp\|S^\top x\|_2^2=\|x\|_2^2$ for every $x\in\R^n$, and for every fixed $r$-dimensional subspace $V\subset\R^n$, with probability close to one,
one has $\alpha\|x\|_2^2\le\|S^\top x\|_2^2$ for all $x\in V$.
In this work, 
we show that
in the regime $r=\Omega(k)$ and $\alpha=\Omega(1)$,
and under a mild additional structural assumption,
no constant-row-sparsity matrix $S$ is OSI,
thereby answering, in a strong form, a question
raised by Cama\~no, Epperly, Meyer, and Tropp.

We show that the failure of the OSI property for sparse random matrices stems from a general deterministic phenomenon, thereby reducing a probabilistic problem to a non-probabilistic one. This phenomenon is related to the restricted invertibility principle introduced in the seminal work of Bourgain--Tzafriri.
Let $(n_k)_{k\in\N}$ be a sequence of integers
satisfying $\frac{n_k}{k}\to\infty$.
For each $k$, let $S^{(k)}$ be
a $n_k\times k$ non-random matrix with $O(1)$ nonzero entries per row,
whose nonzero entries have average magnitude $O(1)$,
and such that the total number of pairs of rows with
supports overlapping at two or more indices
is $o({n_k}^2/k)$.
We prove that for every constant
$\varepsilon>0$, as $k\to\infty$, the overwhelming majority of 
$k\times \lfloor\varepsilon k\rfloor$ submatrices of $(S^{(k)})^\top$
have the smallest singular value $o(1)$.
Thus, the well-invertible submatrices whose existence is guaranteed by the Bourgain--Tzafriri theorem are rare. 
The proof
is itself based on probabilistic tools.

\end{abstract}

\maketitle
\tableofcontents

\section{Introduction}
\label{sec:introduction}

\subsection{Motivation: oblivious subspace injections}

Randomized sketching has become a fundamental technique in modern numerical linear algebra for lowering the computational cost of large-scale problems; see, for example, \cite{Mahoney2011,
Woodruff2014,MartinssonTropp2020}.
With randomized sketching, a high-dimensional dataset
is replaced
by a much smaller random projection obtained by applying a dimension
reduction via a {\it sketch matrix}. With
an appropriately constructed sketch matrix,
the compressed representation retains the key geometric features of the original data, making it possible to compute fast approximate solutions with rigorous guarantees.
A by-now classical notion in randomized sketching is that
of {\it oblivious subspace embedding} (OSE). A random $n\times k$ sketch matrix $S$ is an oblivious subspace embedding with parameters $r$ and $\varepsilon$,
if for every non-random subspace $V$ of $\R^n$ of dimension $r$, the linear operator $S^\top$ restricted to $V$ acts as a $(1+\varepsilon)$--isometry.
Sparse Euclidean subspace embeddings were developed, in particular, in \cite{ClarksonWoodruff2013,MengMahoney2013,NelsonNguyenOSNAP,NelsonNguyenOSELower,Cohen2016,
ChenakkodEtAl2024,ChenakkodEtAl2025,ChenakkodEtAl2026}; we refer to bibliography overview in \cite{ChenakkodEtAl2026} for an account
of this very active line of research.

Rather than two-sided bounds on distortion considered in the OSE setting, one can restrict attention to only quantifying operator's injectivity.
The motivation for our note comes from this framework, systematically explored by Camaño, Epperly, Meyer, and Tropp under the name of {\it oblivious subspace injection (OSI)} \cite{CamanoEtAl2025}. 
The work \cite{CamanoEtAl2025} shows that several randomized linear algebra algorithms succeed under only the OSI hypothesis,
as long as one tolerates constant multiplicative approximation error
(at the same time, OSI has certain limitations which are not discussed here; see \cite{TownsendWang2026}). 
\begin{definition}[{Oblivious subspace injection property;
\cite[Definition~1.2]{CamanoEtAl2025}}]
\label{def:osi}
Fix integers $r \le k \le n$ and a parameter $\alpha \in (0,1]$. A random  $n\times k$ matrix $S$
is \emph{$(r,\alpha)$-OSI} if it satisfies the following two conditions:
\begin{itemize}
    \item[(1)] \emph{Isotropy:} for every vector $x \in \R^n$,
    \[
        \Exp \|S^\top x\|_2^2 = \|x\|_2^2.
    \]
    \item[(2)] \emph{Injectivity:} for every fixed $r$-dimensional subspace $V \subset \R^n$, with probability at least $9/10$,
    \[
        \alpha \|x\|_2^2 \le \|S^\top x\|_2^2
        \qquad \forall x \in V.
    \]
\end{itemize}
The probability $9/10$ in the second condition
can be replaced with another number close to one.
\end{definition}

Given a design matrix $A\in\R^{n\times r}$ and a response matrix
$B\in\R^{n\times m}$, multivariate regression problem asks to solve
$\min_{X\in\R^{r\times m}}\|AX-B\|_{\mathrm F}^2$.
Instead of solving this problem directly, one can draw a random $n\times k$ sketch matrix $S$ and
compute
$
    \widetilde X
    \in
    \operatorname*{arg\,min}_{X\in\R^{r\times m}}
    \|S^\top(AX-B)\|_{\mathrm F}^2
$
by taking
$\widetilde X:=(S^\top A)^\dagger S^\top B.$
The authors of \cite{CamanoEtAl2025} show that if $S$ is a $(r,\alpha)$-OSI,
then with positive constant probability
\[
    \|A\widetilde X-B\|_{\mathrm F}^2
    \le
    \frac{C}{\alpha}
    \min_X\|AX-B\|_{\mathrm F}^2
\]
for a universal constant $C$ \cite[Theorem~2.7]{CamanoEtAl2025}.
Furthermore, if $S$ is sparse with $\zeta$ nonzeros per row, the sketches $S^\top A$ and
$S^\top B$ can be formed in $O(\zeta n(r+m))$ arithmetic operations for dense inputs,
instead of the $O(k n(r+m))$ operations required by a dense $n\times k$
sketch matrix.

Another example is the single-pass Nystr\"om approximation for a positive
semidefinite matrix. Given a PSD $A\in\R^{n\times n}$ and a sketch matrix
$S\in\R^{n\times k}$, one forms the sketch
$Y=A S$ and returns the rank-$k$ positive semidefinite approximation
$
    \widehat A
    =
    Y(S^\top Y)^\dagger Y^\top .
$
The authors of \cite{CamanoEtAl2025} prove that if $S$ is an
$(r,\alpha)$-OSI with $k\ge r$, then, with probability at least $9/10$,
\[
    \|A-\widehat A\|_*
    \le
    \frac{C}{\alpha}
    \|A-[[A]]_r\|_*
\]
for a universal constant $C$ \cite[Corollary~2.5]{CamanoEtAl2025},
where $\|\cdot\|_*$ denotes the nuclear norm, and
$[[A]]_r$ is the best rank $r$ approximation of $A$ in Frobenius norm.
For a sparse OSI with $\zeta$ nonzeros per row, forming the Nystr\"om sketch
$Y=A S$ for dense matrix $A$ costs $O(\zeta n^2)$.

In view of the above discussion, 
the optimal relation between OSI parameters --- sketch matrix sparsity and dimensions, as well as dimension of the oblivious subspace, and the magnitude of the injectivity parameter $\alpha$ ---
are of central importance. 
As an example of recent developments,
Tropp \cite{Tropp2026Comparison} proved that a $n\times k$ SparseStack map
$S$ with row sparsity
$
\zeta\ge 6\delta^{-1}\log(C\,r)$
and with
$k\ge 16\delta^{-2}r,$
is $(r,1-\delta)$-OSI.
Which sparsity level is sufficient for the OSI property is a compelling open problem. 
In \cite{CamanoEtAl2025}, the authors 
conjectured that constant sparsity 
{\it SparseStack} matrices (to be defined later)
are $(r,1/2)$-OSI for some embedding dimension $k=O(r)$.
As a much more general problem, in this paper we consider
\begin{problem}[Constant sparsity OSI matrices]\label{main problem}
Can an $n\times k$ random sketch matrix $S$
with constant row sparsity have
$(r,\alpha)$-OSI property for
constant $\alpha$ and with $k=O(r)$?
\end{problem}

The main result of this work shows that, under very broad assumptions including, in particular, SparseStack
model, constant sparsity sketch matrices {\it do not} possess OSI property for proportional oblivious subspace dimension and with constant injectivity parameter.
Moreover, we show that the injectivity property (2) in Definition \ref{def:osi} fails even if we restrict the test spaces $V \in \R^n$ to $r$-dimensional {\it coordinate} subspaces.

Problem \ref{main problem} is probabilistic. However, as we explain in the next section, the failure of the OSI property described above is due to a general phenomenon pertaining to deterministic matrices. This phenomenon called {\it restricted invertibility}  is a classical object of study in geometric functional analysis. Roughly speaking, it states that a matrix which is far away from the set of low-rank matrices has a well-invertible submatrix. 
This phenomenon will be discussed in more detail in the next section. Its relevance to Problem \ref{main problem} follows from our main result, which shows that, under mild conditions, a sparse matrix has very few such well-invertible submatrices.

\subsection{Well-invertible column subsets}

The seminal {\it restricted invertibility} theorem of Bourgain--Tzafriri
\cite{BourgainTzafriri1987,BourgainTzafriri1989,BourgainTzafriri1991} states that there are universal constants
$c,\gamma>0$ such that every $n\times n$ matrix $A$ with columns of Euclidean
norm one contains a column index set $\sigma\subset[n]$ with
$|\sigma|\ge c n/\|A\|^2$ and $
\smin(A_\sigma)\ge \gamma,$
where $\|A\|$ is the spectral norm of $A$, and $A_\sigma$ denotes the corresponding column submatrix. Vershynin
\cite{VershyninJohnsDecompositions} later extended this principle to
rectangular matrices, with the number of selected columns governed
by the stable rank $\|A\|_{\mathrm{HS}}^2/\|A\|^2$. The literature on
restricted invertibility is extensive and touches on various problems in local Banach space theory, convex geometry, and operator algebras; see, for example,
\cite{SpielmanSrivastava2012,Youssef2014,MarcusSpielmanSrivastava2015,MarcusSpielmanSrivastava2017,NaorYoussef2017}, and, in particular, recent work \cite{NaorYoussef2017}
for an in-depth literature overview.
The core principle of the theory is that well-invertible column subsets
exist under very general spectral assumptions on the input matrix.

In this paper, we focus on well-invertible column subsets of {\it proportional} size, namely the setting $|\sigma|=\Omega(k)$.
As a specific illustration, let $M$ be a $k\times n$ matrix ($n\geq k$) with unit-length columns.
The theory of restricted invertibility then asserts that,
under an extra assumption on the matrix norm, there exists a set of columns of $M$ of size $\Omega(k)$ such that the smallest singular value of the corresponding submatrix is bounded below by a positive constant.
Conversely, our goal in this work is to provide an {\it upper} bound on the number of well-invertible column subsets of $M$,
under a sparsity assumption on the input matrix.
In this sense, our result can be viewed as a dual to the Bourgain--Tzafriri theory: while restricted invertibility guarantees that good column selections exist, we show that, for sparse matrices, such selections are necessarily rare.
Our main result is the following asymptotic statement
about the relative proportion of well-invertible column subsets in sparse
matrices.
In what follows, by $M_{\cdot j}$ we denote $j$-th column of a matrix $M$.
Further, write
$\operatorname{supp}M_{\cdot j}$ for the support of its $j$-th column.

\begin{theor}[Main result: well-invertible column subsets of sparse matrices are rare]
\label{thm:weighted-local-sparse}
Let $M^{(k)}$, $k\geq 1$, be a deterministic sequence of $k\times n_k$ matrices with
$n_k/k\to\infty$. We assume the following:
\begin{enumerate}
  \item (Column support) The average column support size is of constant order:
  \[
  \frac1{n_k}\sum_{j=1}^{n_k}
  |\operatorname{supp}M^{(k)}_{\cdot j}|
  = O(1).
  \]
  \item (Average magnitude) The average magnitude of {\bf nonzero} entries
  is of constant order:
  \[
  \frac{
  1
  }{
     |\operatorname{supp}M^{(k)}|
    } 
    \sum_{i=1 }^k \sum_{j =1}^{n_k} |M^{(k)}_{ij}|
  =O(1).
  \]
  \item (Column overlaps)
  The total number of pairs of columns of $M^{(k)}$ whose supports overlap
  on two or more indices, is of order
  $$
  o\Big(\frac{n_k^2}{k}\Big).
  $$  
\end{enumerate}
Then for every $\varepsilon>0$, if
$\Omega_k\subset[n_k]$ are chosen uniformly at random with
$|\Omega_k|=\lfloor \varepsilon  k\rfloor$, we have \[
\smin\bigl(M^{(k)}_{\Omega_k}\bigr)=o(1)
\]
with probability $1-o(1)$. Equivalently, a uniformly chosen proportional-size
column subset is well-invertible with asymptotically vanishing probability.
\end{theor}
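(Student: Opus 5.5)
The plan is to certify that $\smin(M^{(k)}_{\Omega_k})$ is small by exhibiting, with high probability, a test vector $x\in\R^{\Omega_k}$ with $\|M^{(k)}_{\Omega_k}x\|_2\le \delta\|x\|_2$ for an arbitrary fixed $\delta>0$. The vector $x$ will be supported on a bounded, tree-like configuration of columns in which most rows are cancelled exactly. Concretely, I would fix $\delta$, pick a depth $L=L(\delta)$, and show that $\Omega_k$ contains with probability $1-o(1)$ a "witness" configuration of depth $L$. Its residual $M^{(k)}_{\Omega_k}x$ lives only on a boundary layer, and its mass is at most $C/L$ times $\|x\|_2^2$. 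Letting $\delta\to0$ slowly (a diagonal argument over $k$) then gives $\smin=o(1)$.

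\emph{Step 1: pruning via Markov's inequality.} Assumptions (1) and (2) only control averages, so I would first discard bad columns. These are columns with support size larger than some constant $D$, and columns whose nonzero entries are very large or very small in ratio to each other. I would also discard heavy rows, i.e.\ rows lying in the support of more than $K n_k/k$ columns. By Markov's inequality, for suitable constants $D,K$ (depending on $\varepsilon$) a positive proportion of columns is good, and still a positive proportion after restricting to light rows. Assumption (3) says a typical column has $o(n_k/k)$ double-overlap partners. On the other hand, a column meets about $n_k/k$ columns through each of its rows. Therefore after removing a vanishing fraction of columns, every surviving column's neighbourhood in the column-overlap graph looks like a disjoint union, over its rows, of "stars". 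These stars do not interact at distance $\le 2L$, except for a set of columns that is $o(1)$-negligible.

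\emph{Step 2: the local construction.} For a root column $j$, I would build $x$ recursively, in the manner of a matching rooted tree. I set $x_j=1$. For each row $i$ of $j$ I pick a child column $j'\in\Omega_k$ with $i\in\operatorname{supp}M_{\cdot j'}$ and set $x_{j'}=-M_{ij}x_j/M_{ij'}$, so that row $i$ is cancelled exactly. I then recurse from $j'$ on its remaining rows. To keep $\|x\|$ from being dominated by the boundary, I will not use a single tree. Instead I take $L$ such layered structures, or equivalently average the cancelling vectors over many choices of children with weights chosen so that the coefficients stay comparable (the weight-compatibility condition). The residual then consists only of the uncancelled rows at depth $L$. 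Its energy is $O(1)$ per unit of boundary weight, while $\|x\|^2$ accumulates $\Omega(1)$ per layer. This is where the Markov-pruned magnitude bounds from Step 1 are needed, so that the ratios $M_{ij}/M_{ij'}$ stay bounded.

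\emph{Step 3: existence in $\Omega_k$ with high probability.} Finally I would use a first and second moment computation to show that the number of such witness configurations fully contained in $\Omega_k$ is concentrated and positive. Each configuration has $O_L(1)$ columns, so the first moment is a sum over configurations of $(\varepsilon k/n_k)^{\#\text{columns}}$. It is controlled from below using the $\approx n_k/k$ choices of child at each row. The second moment is controlled using assumption (3): overlapping pairs of configurations must share a column or a four-cycle, and the local sparsity of double overlaps makes these pairs negligible.

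I expect the main obstacle to be the branching in Step 2–3. Inside $\Omega_k$ each row is hit by only $O(\varepsilon)$ columns on average, so a naive exploration dies out and the tree cannot be completed. This is why I would exploit $n_k/k\to\infty$ and the freedom to choose the root among many candidates. Conditional on a row containing a root, the number of other $\Omega_k$-columns through it is not small on the exceptional event we select for. The delicate point is to set up the witness block, meaning which rows must be cancelled and which may be left in a "reservoir", so that both the ratio $\|Mx\|/\|x\|\lesssim L^{-1/2}$ and a nonvanishing first moment hold simultaneously.
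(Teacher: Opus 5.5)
\paragraph{The cancellation scheme in Step~2.} Your frame is sound: a test vector on a tree-like configuration inside $\Omega_k$, a diagonal argument in $\delta$, and your closing remark that the root's rows must carry unusually many sampled columns. But the scheme itself does not work as stated.

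Suppose every row of the configuration carries one parent and one child. Take $\pm1$ entries and columns with exactly $d$ nonzero entries. Then $\|Mx\|_2^2=d\|x\|_2^2+x^\top A_\sigma x$ for every $x$ supported on the configuration. Here $A_\sigma$ is a signed adjacency matrix of a finite tree of maximum degree $d$. It is switching-equivalent to an unsigned one of norm at most $2\sqrt{d-1}$. Hence $\|Mx\|_2\ge(\sqrt{d-1}-1)\|x\|_2$ for $d\ge3$, however deep the tree and however the coefficients are chosen. Disjoint unions of such trees do no better.

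Depth is therefore not where smallness comes from. With $w$ children per internal row, the $\ell$-th layer has mass of order $((d-1)/w)^\ell$. So your accounting ($\Omega(1)$ per layer, $O(1)$ on the boundary) forces exactly $w=d-1$. That version does give about $(d-1)/L$ for $\pm1$ entries, but it needs two-sided control of the ratios $M_{\ell j'}/M_{ij'}$ along every path. If instead $w\to\infty$, depth becomes useless.

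The paper takes the second option at depth one. It uses a root $j_\star$ and $m$ children at each $i\in N(j_\star)$, with coefficients $-M_{ij_\star}/(mM_{ij})$ and pairwise disjoint outer neighbourhoods. This gives $\|Mv\|_2^2\le A_{\rm out}^2|\mathcal L_{\rm out}|/m^2=O(d^2A_{\rm out}^2/m)$ with $\|v\|_2\ge1$. All the work then goes into finding such a root in the sample.

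\paragraph{Step~1 and the concentration argument.} Several of your pruning claims are not supported by the hypotheses.

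\emph{Entry ratios.} Assumption (2) bounds the average magnitude only from above. Markov can remove columns with a large entry, but cannot give a positive proportion of columns with comparable entries: let column $j$ have entries $1$ and $2^{-j}$. The paper needs only the one-sided comparison $|M_{ij}|\ge|M_{ij_\star}|$ at the root rows, plus the upper bound $10d_{\rm av}$. It gets the comparison by taking a light root and children from the top $1/(32d)$-fraction $N^\dagger_G(i)$ of each row.

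\emph{Heavy rows.} You cannot delete rows, since the residual is measured on all of them. Deleting every column that meets a row of degree $>Kn_k/k$ can remove everything. Assumption (3) is compatible with every column having one entry in a fixed set of $k/\log k$ rows.

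\emph{Tree-likeness.} Assumption (3) controls only four-cycles, so it does not make neighbourhoods tree-like at distance $2L$. In random models with $n_k/k\gg\sqrt k$, a typical column already lies on many six-cycles. The paper first thins to about $c\,k/\varepsilon_k$ columns and deletes a small fraction to make the graph $C_4$-free. It then uses reservoirs of size $m(1+(d-1)^2)$ with greedy pruning to make the outer neighbourhoods disjoint.

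\emph{Second moment.} The pairs that matter in your variance bound are those sharing columns, since disjoint pairs are negatively correlated under sampling without replacement. Their number depends on the row-degree profile, not on (3), and you give no bound for it. The paper avoids the second moment entirely. Using Hall's theorem, it builds $K$ pairwise disjoint witness blocks of size at most $B$ inside an intermediate set of size $\lceil\lambda r\rceil$. It then exposes them one by one, so the failure probability is at most $(1-(8\lambda)^{-B})^K$.
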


The ``columns overlap'' assumption in the above theorem
should not be thought of as a pseudorandomness condition on the
input matrix. Indeed, it is not difficult to verify
that for classical random matrix models of constant sparsity (including
combinatorial random matrices and SparseStack$^\top$),
the typical number of pairs of columns intersecting on two or more rows,
is of order $O(n_k^2/k^2)$, i.e almost $k$ times smaller than
the threshold we impose in the theorem.

We emphasize that our result is {\it universal}
rather than existential.
Of course, one can easily construct sparse matrices
satisfying the assumptions of the restricted invertibility theory,
and yet containing very few well invertible column subsets.
As an example, one can take a $k \times k$ identity matrix extended by the $k \times (n_k-k)$ zero matrix, or
concatenate $n_k/k$ identity matrices of size $k \times k$ to form a $k \times n_k$ matrix. 
In the first example, a random column subset of size $\Theta(k)$
is singular with high probability due to presence of zero columns.
In the second example, the obstruction to invertibility
is existence of two identical columns within the selected subset.

There are, however, sparse matrices for which counting well invertible column subsets is far less obvious. Such are, for example, random matrices with independent columns and $d$ uniformly chosen ones in each column, or random SparseStack$^\top$ matrices. As we will show, these matrices satisfy the assumptions of Theorem   \ref{thm:weighted-local-sparse} with high probability.

The proof of our theorem shows that, apart from the selected submatrix having a zero column or having two identical columns,
there is the third less obvious reason for the scarcity of well-invertible submatrices in the sparse setting. It is the abundance of $k \times m$ rectangular submatrices having very small least singular values,
and comprising certain combinatorial {\it tree} structure.
Here, $m =o(k)$ and $m \to \infty$ as $n_k/k \to \infty$. These matrices are defined in Section \ref{sec: tech overview} below. If a
selected
submatrix $M^{(k)}_{\Omega_k}$ of $M^{(k)}$ contains one of these special submatrices, then the least singular value of $M^{(k)}_{\Omega_k}$ will be small as well, and their profusion guarantees that the number of 
submatrices not containing any special $k \times m$ one is small.

Theorem  \ref{thm:weighted-local-sparse} directly addresses Problem~\ref{main problem}
for sparse matrices satisfying the local column-overlap and growth assumptions.
As a specific illustration of the theorem,
consider the SparseStack model of sketch matrices.
\begin{definition}[SparseStack]
\label{def:sparsestack}
Fix integers $n,\zeta,b \in \N$, and set
\[
    k:=\zeta b.
\]
Partition the column set $[k]$ into $\zeta$ consecutive blocks of size $b$. 
Independently choose a uniformly distributed random position in each block and populate these positions with independent $\pm 1$ entries.

More formally, the SparseStack matrix is a random $n\times k$ matrix
$S=(s_{ij})$
constructed as follows: for every row index $i \in [n]$ and every block index $t \in [\zeta]$, choose
$p_{i,t} \in [b],$
$\varrho_{i,t} \in \{-1,1\},$
independently and uniformly at random, and place the value $\varrho_{i,t}/\sqrt{\zeta}$ in the matrix coordinate of $S$
\[
    (i,(t-1)b+p_{i,t}).
\]
After the construction is complete, set all unassigned entries of $S$ to zero.
\end{definition}

The authors of \cite{CamanoEtAl2025} formulated the following conjecture:
\begin{conjecture}[{SparseStack: constant row sparsity;
see \cite[Conjecture~1.9]{CamanoEtAl2025}}]\label{main:conj}
A SparseStack test matrix is an $(r,1/2)$-OSI for some embedding dimension $k=O(r)$ and some sparsity level $\zeta=O(1)$.
\end{conjecture}

As an immediate corollary of the main theorem, we obtain
\begin{cor}[No OSI for constant-sparsity SparseStack model]
\label{cor:sparsestack-no-osi}
For every integer $\zeta\ge2$ and every choice of
$\varepsilon,\alpha>0$ there are constants $B_0,C>0$
depending only on $\zeta, \varepsilon,\alpha$
with the following property.
Let $b\geq B_0$, let $k=\zeta b$ and $n\geq C\,k$,
and assume that the $n\times k$ matrix $S$
is SparseStack with row
sparsity $\zeta$ and with block size $b$.
Let $\Omega$ be any non-random subset of $[n]$ of size at least $\varepsilon\,k$.
Then with probability at least $0.99$,
\[
\smin\left((S^\top)_{\Omega}\right)< \alpha,
\]
where $\smin\left((S^\top)_{\Omega}\right)$ is the 
smallest singular value of the $k\times |\Omega|$ submatrix $(S^\top)_\Omega$.
In particular, $S$ is not $(\varepsilon\,k,\alpha)$--OSI.
\end{cor}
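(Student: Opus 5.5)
The plan is to deduce the corollary from Theorem~\ref{thm:weighted-local-sparse}, applied to $M=S^\top$, in three steps: reduce to a fixed subset size, replace the fixed $\Omega$ by a uniformly random one using exchangeability of rows, and then argue by contradiction using a deterministic sequence of near-worst-case matrices.

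\textbf{Step 1: reductions.} Interpret $\smin(A)=\inf_{\|x\|_2=1}\|Ax\|_2$ over $x\in\R^{|\Omega|}$.
\begin{itemize}
\item Enlarging the column set only enlarges the domain of the infimum. Hence $\smin((S^\top)_\Omega)\le \smin((S^\top)_{\Omega'})$ for every $\Omega'\subset\Omega$.
\item It therefore suffices to treat $|\Omega|=\lfloor\varepsilon k\rfloor$, taking $\varepsilon<1$ without loss of generality. If $|\Omega|>k$, the $k\times|\Omega|$ matrix has a nontrivial kernel, so $\smin=0$ trivially.
\item The rows of $S$ are i.i.d. Thus the law of $(S^\top)_\Omega$ is the same for every fixed $\Omega$ of a given size. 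It also coincides with the law of $(S^\top)_{\Omega}$ when $\Omega$ is drawn uniformly at random and independently of $S$.
\item Consequently it is enough to show the following: with $\Omega$ uniform of size $\lfloor\varepsilon k\rfloor$ and independent of $S$, one has $\mathbb P(\smin\ge\alpha)\to0$ as $b\to\infty$ and $n/k\to\infty$.
\item The final claim follows because a coordinate subspace $V=\mathrm{span}\{e_i:i\in\Omega\}$ witnesses failure of injectivity with probability $0.99>1/10$.
\end{itemize}

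\textbf{Step 2: the hypotheses hold with high probability.} We check the three conditions of Theorem~\ref{thm:weighted-local-sparse} for $M=S^\top$.
\begin{itemize}
\item Every column of $M$ (row of $S$) has exactly $\zeta$ nonzeros.
\item All nonzero entries have magnitude $1/\sqrt\zeta$.
\item For the overlap condition, two distinct rows coincide in block $t$ with probability $1/b$, independently across blocks. So they overlap in at least two positions with probability at most $\binom{\zeta}{2}b^{-2}$.
\item Hence the expected number of such pairs is at most $n^2\zeta^2 b^{-2}=\zeta^4 n^2/k^2$.
\item Fix $h_k\to\infty$ with $h_k=o(k)$ and let $\mathcal G_k$ be the event that the number of such pairs is at most $h_k\zeta^4n^2/k^2=o(n^2/k)$. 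By Markov's inequality, $\mathbb P(\mathcal G_k^c)\le 1/h_k\to0$.
\end{itemize}

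\textbf{Step 3: contradiction argument (the main obstacle).} The difficulty is converting the asymptotic, deterministic-sequence statement of the theorem into uniform constants $B_0,C$ for a random matrix.
\begin{itemize}
\item Suppose no $B_0,C$ work. Then for each $j$ there are $b_j\ge j$ and $n_j\ge j k_j$, where $k_j=\zeta b_j$, such that $\mathbb P(\smin\ge\alpha)\ge 0.01$.
\item Pass to a subsequence with $k_j$ strictly increasing. The theorem applies along any such subsequence, for instance by filling the other indices with trivial admissible matrices.
\item For each $j$, let $\mathcal C_j$ be the class of deterministic $k_j\times n_j$ matrices with exactly $\zeta$ nonzeros per column, all of magnitude $1/\sqrt\zeta$, and at most $h_{k_j}\zeta^4n_j^2/k_j^2$ pairs of columns overlapping in two or more rows.
\item Choose $M^{(j)}\in\mathcal C_j$ that maximizes $p(M)=\mathbb P_\Omega(\smin(M_\Omega)\ge\alpha)$. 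The class is finite up to signs and positions, so a maximizer exists.
\item The sequence $(M^{(j)})$ satisfies all three hypotheses of the theorem, so $\sup_{\mathcal C_j}p\to0$.
\item Conditioning on $S$, we get $\mathbb P(\smin\ge\alpha)\le \mathbb P(\mathcal G_{k_j}^c)+\sup_{\mathcal C_j}p\to0$. This contradicts the lower bound $0.01$.
\end{itemize}
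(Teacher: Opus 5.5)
Your proposal is correct and follows essentially the paper's route: the paper's one-line proof notes that SparseStack$^\top$ satisfies the hypotheses of Theorem~\ref{thm:weighted-local-sparse} with high probability, and that exchangeability of the rows lets a fixed $\Omega$ replace the uniform sample. The paper checks the hypotheses via Proposition~\ref{prop:sparsestack-double-overlap} (the local double-overlap form), where you instead verify the global pair count with a first-moment bound; your monotonicity reduction to $|\Omega|=\lfloor\varepsilon k\rfloor$ and your worst-case-maximizer contradiction argument, which yields uniform constants $B_0,C$ from the asymptotic deterministic statement, are sound and supply details the paper leaves implicit.
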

\begin{remark}
The sign distribution of the nonzero entries in the SparseStack model can be replaced by an arbitrary distribution with finite first moment; our theorem then yields the same conclusion for $(S^\top)_\Omega$. 
We leave the details to the interested reader,
and refer to Sections~\ref{sec:matrix-theorem} and~\ref{sec:sparsestack}
for a discussion of quantitative aspects of our results.
\end{remark}

\begin{remark}
Our main result extends well beyond the SparseStack model. In particular, our theorem covers ``combinatorial'' random sketch matrices with i.i.d. rows, where the row supports are chosen uniformly at random among all $d$-element subsets of $[k]$ and the nonzero entries are $\pm 1$, as well as  random matrices with prescribed number of non-zero entries in every row and column.
\end{remark}

\subsection{Technical overview} \label{sec: tech overview} \,\\

We now explain the graph mechanism behind the main result. 
To illustrate the idea, in this discussion we consider matrices whose entries only take values in $\{0,1,-1\}$ and every column has exactly $d$ nonzero entries for some fixed $d\ge 2$.

\subsubsection{The obstruction pattern}
Given a rectangular matrix $M$, we view it as the adjacency matrix of a bipartite graph: rows are left vertices,
columns are right vertices, and each nonzero entry corresponds to an edge. 
Sampling column subsets
of $M$ is equivalent to constructing
a ``right-induced subgraph'', obtained as one-neighborhood of the selected right vertices. Notation-wise, if
$G=(I,J,E)$ is a bipartite graph on $I\sqcup J$ and $\Omega\subset J$, we write
\[
G[\Omega]:=(I,\Omega,E\cap(I\times\Omega)).
\]
The {\it local obstruction} to strong quantitative invertibility
of the random column subset, which we exploit in the proof, is an \mrt{m}.
Let $G[\Omega]$ be the support graph of
the sampled matrix $M_{\Omega}$. Suppose further that
$G[\Omega]$ contains an
induced depth-three rooted bipartite tree $T$ with right root $j_\star$. Thus
$j_\star$ has left neighbors $i_1,\dots,i_d$; for each $i_\ell$, there are
$m$ secondary (auxiliary) vertices ${\mathcal K}_{i_\ell}=\{c_{\ell,1},\dots,c_{\ell,m}\}$ adjacent to
$i_\ell$; and each auxiliary right vertex has its remaining $d-1$ left
neighbors in a third layer, with all these outer left neighbors distinct. 
Let us emphasize that the tree contains all graph neighbors of each of its right vertices. 

\begin{figure}[H]
\centering
\begin{tikzpicture}[
  scale=0.84,
  transform shape,
  x=1cm,
  y=1cm,
  rightv/.style={circle, draw=blue!65, fill=blue!10, thick, minimum size=6.5mm, inner sep=0pt},
  auxv/.style={circle, draw=blue!55, fill=blue!8, thick, minimum size=4.8mm, inner sep=0pt},
  leftv/.style={circle, draw=teal!70!black, fill=teal!10, thick, minimum size=6mm, inner sep=0pt},
  tailv/.style={circle, draw=gray!70, fill=gray!12, thick, minimum size=3.4mm, inner sep=0pt},
  edge/.style={draw=gray!70, line width=0.55pt},
  label/.style={font=\scriptsize}
]

\node[rightv] (root) at (0,0) {$j_\star$};

\node[leftv] (i1) at (-4.8,-1.15) {$i_1$};
\node[leftv] (i2) at (0,-1.15) {$i_2$};
\node[leftv] (i3) at (4.8,-1.15) {$i_3$};

\foreach \i in {1,2,3} {
  \draw[edge] (root) -- (i\i);
}

\node[auxv] (h11) at (-6.0,-2.35) {};
\node[auxv] (h12) at (-5.4,-2.35) {};
\node[auxv] (h13) at (-4.8,-2.35) {};
\node[auxv] (h14) at (-4.2,-2.35) {};
\node[auxv] (h15) at (-3.6,-2.35) {};

\node[auxv] (h21) at (-1.2,-2.35) {};
\node[auxv] (h22) at (-0.6,-2.35) {};
\node[auxv] (h23) at (0,-2.35) {};
\node[auxv] (h24) at (0.6,-2.35) {};
\node[auxv] (h25) at (1.2,-2.35) {};

\node[auxv] (h31) at (3.6,-2.35) {};
\node[auxv] (h32) at (4.2,-2.35) {};
\node[auxv] (h33) at (4.8,-2.35) {};
\node[auxv] (h34) at (5.4,-2.35) {};
\node[auxv] (h35) at (6.0,-2.35) {};

\foreach \aux in {h11,h12,h13,h14,h15} {\draw[edge] (i1) -- (\aux);}
\foreach \aux in {h21,h22,h23,h24,h25} {\draw[edge] (i2) -- (\aux);}
\foreach \aux in {h31,h32,h33,h34,h35} {\draw[edge] (i3) -- (\aux);}

\foreach \aux/\x in {
  h11/-6.0,h12/-5.4,h13/-4.8,h14/-4.2,h15/-3.6,
  h21/-1.2,h22/-0.6,h23/0,h24/0.6,h25/1.2,
  h31/3.6,h32/4.2,h33/4.8,h34/5.4,h35/6.0
} {
  \node[tailv] (\aux-L) at ({\x-0.18},-3.35) {};
  \node[tailv] (\aux-R) at ({\x+0.18},-3.35) {};
  \draw[edge] (\aux) -- (\aux-L);
  \draw[edge] (\aux) -- (\aux-R);
}

\node[label, above] at (0,0.35) {root right vertex};
\node[label, left] at (-6.8,-1.15) {$d=3$ left neighbors};
\node[label, left] at (-6.8,-2.35) {$m=5$ chosen right vertices per branch};
\node[label, left] at (-6.8,-3.35) {disjoint outer left neighborhoods};

\end{tikzpicture}

\vspace{0.6em}
\caption{A $m$-matching rooted tree, shown for right degree $d=3$ and width $m=5$. The four tree layers have $1$, $3$, $15$, and $30$ vertices, respectively.}
\label{fig:rooted-matching-tree}
\end{figure}

This pattern gives an explicit test vector $v$ supported on the right vertices
of $T$. Put $v_{j_\star}=1$, and choose the coefficients $v_{c_{\ell,a}}$
with $|v_{c_{\ell,a}}|=1/m$ so that the contribution of the root column
cancels on the rows $i_1,\dots,i_d$. The only remaining nonzero coordinates of
$M_{\Omega}v$ lie on the outer left layer. They are disjointly
supported, there are $dm(d-1)$ of them, and each has magnitude $1/m$. Hence
\[
\|M_{\Omega}v\|_2
\le
\frac{\sqrt{dm(d-1)}}{m}
\le
\frac d{\sqrt m}.
\]
Since $\|v\|_2\ge1$, this forces
$\smin(M_{\Omega})\le d/\sqrt m$.
Thus, whenever we are able to find the described pattern in $M_\Omega$
with parameter $m$ large, we immediately certify
a strong upper bound on the smallest singular value of $M_{\Omega}$.

\begin{figure}[H]
\centering
\resizebox{0.78\textwidth}{!}{%
\begin{tikzpicture}[
  x=0.336cm,
  y=0.288cm,
  font=\scriptsize,
  grid/.style={draw=gray!45,line width=0.25pt},
  rootcell/.style={fill=blue!55,opacity=0.28},
  levelonecell/.style={fill=teal!10},
  outercell/.style={fill=blue!55,opacity=0.28}
]
\node[anchor=east,font=\scriptsize] at (-2.7,-5) {$M_T=$};

\foreach \r/\lab in {
  0/{$i_1$},
  1/{$i_2$},
  2/{$i_3$},
  3/{$o_{1,1,1}$},
  4/{$o_{1,1,2}$},
  5/{$o_{1,2,1}$},
  6/{$o_{1,2,2}$},
  8/{$o_{3,5,1}$},
  9/{$o_{3,5,2}$}
} {
  \node[anchor=east] at (-0.25,-\r-0.5) {\lab};
}

\node at (0.5,0.55) {$j_\star$};
\node at (3.5,0.55) {$c_{1,\cdot}$};
\node at (8.5,0.55) {$c_{2,\cdot}$};
\node at (13.5,0.55) {$c_{3,\cdot}$};

\foreach \r in {0,1,2} {
  \fill[rootcell] (0,-\r) rectangle +(1,-1);
}
\foreach \c in {1,...,5} {
  \fill[levelonecell] (\c,0) rectangle +(1,-1);
}
\foreach \c in {6,...,10} {
  \fill[levelonecell] (\c,-1) rectangle +(1,-1);
}
\foreach \c in {11,...,15} {
  \fill[levelonecell] (\c,-2) rectangle +(1,-1);
}
\foreach \r/\c in {3/1,4/1,5/2,6/2,8/15,9/15} {
  \fill[outercell] (\c,-\r) rectangle +(1,-1);
}

\foreach \r in {0,...,9} {
  \foreach \c in {0,...,15} {
    \draw[grid] (\c,-\r) rectangle +(1,-1);
  }
}
\foreach \x in {0,1,6,11,16} {
  \draw[black!65,line width=0.5pt] (\x,0) -- (\x,-10);
}
\draw[black!65,line width=0.5pt] (0,0) -- (16,0);
\draw[black!65,line width=0.5pt] (0,-3) -- (16,-3);
\draw[black!65,line width=0.5pt] (0,-10) -- (16,-10);

\foreach \r in {0,1,2} {
  \node at (0.5,-\r-0.5) {$1$};
}
\foreach \c in {1,...,5} {
  \node at (\c+0.5,-0.5) {$1$};
}
\foreach \c in {6,...,10} {
  \node at (\c+0.5,-1.5) {$1$};
}
\foreach \c in {11,...,15} {
  \node at (\c+0.5,-2.5) {$1$};
}
\foreach \r/\c in {3/1,4/1,5/2,6/2,8/15,9/15} {
  \node at (\c+0.5,-\r-0.5) {$1$};
}
\node at (8.5,-7.5) {$\vdots$};
\end{tikzpicture}%
}
\vspace{0.6em}
\caption{The support-pattern matrix associated with the tree in Figure~\ref{fig:rooted-matching-tree}. Blank entries are zero; the displayed $1$'s record support entries. The first three rows are the root-neighbor rows where the test vector cancels, while the lower rows are disjoint outer-neighborhood rows.}
\label{fig:rooted-matching-tree-matrix}
\end{figure}

For a formal definition of the $m$--matching tree, see
Definition~\ref{def:rooted-matching-tree} below.
A generalized version of the deterministic singular-value statement sketched above is Proposition~\ref{prop:weighted-tree-small-smin}. 
The simplified description above only operates with the matrix support.
For a general {\it weighted}
matrix \(M\), the construction of the target tree pattern is more complicated: we require each edge connecting 
second and third layers of the tree,
to have a magnitude at least as large as 
that of the corresponding ``parent'' edge connecting the second layer to the root; this is the
weight-compatible condition in Definition~\ref{def:weight-compatible-rooted-matching-tree}.

\subsubsection{Reduction to $C_4$-free support graphs}
To extract an $m$--matching tree from a sampled subgraph, we repeatedly use a
two-stage sampling strategy.  Rather than looking only at the final sample, we
first expose a larger intermediate right vertex set.  More precisely, we may realize the
final uniform sample in two stages: first choose an intermediate set
\(\Omega'_k\subset[n_k]\) uniformly, with \(|\Omega'_k|\) larger than the final
sample size, and then choose the final set \(\Omega_k\subset\Omega'_k\)
uniformly with \(|\Omega_k|=r_k=\Theta(k)\). This intermediate set has enough room to reveal the desired structure, and after trimming a small number of ``bad'' right vertices it may have better deterministic properties than the original graph. In fact, we use multiple rounds of intermediate sampling in the proof. 

One of the key properties we obtained from an intermediate support graph is {\it \(C_4\)-freeness}. Here we illustrate that in detail. 
For distinct columns \(j,j'\), the overlap condition appearing in
Theorem~\ref{thm:weighted-local-sparse} has the following graph-theoretic
interpretation, where \(G^{(k)}\) is the support graph of \(M^{(k)}\):
\[
\bigl|\operatorname{supp}M^{(k)}_{\cdot j}
\cap
\operatorname{supp}M^{(k)}_{\cdot j'}\bigr|\ge2
\quad \Leftrightarrow \quad
\begin{gathered}
\text{there is a \(4\)-cycle \(C_4\) in the support graph \(G^{(k)}\)}\\
\text{containing \(j\) and \(j'\).}
\end{gathered}
\]
Specifically, if \(i,i'\) are two distinct common left neighbors of \(j\) and
\(j'\), then \(i-j-i'-j'-i\) is a four-cycle in the support graph. Thus the
third condition in the main theorem controls the number of pairs of right
vertices that lie together on a four-cycle.

Our construction relies on a somewhat technical \lsfci{} assumption on the graph, see Definition \ref{def:local-sparse-four-cycle-incidence}.
A cleaner, albeit slightly more restrictive, way to to ensure that it is satisfied is to assume that the
support graph has \(o(n^2/k)\) four-cycles in total:
\begin{equation}  \label{eq: few cycles}
\mbox{the number of }4\text{-cycles in }G^{(k)}
=
o\left(\frac{n_k^2}{k}\right).
\end{equation}
Throughout the introduction, the reader can substitute   condition \eqref{eq: few cycles} for \lsfci{}.
This easier assumption holds for a number of random matrix families,
and is used in the (non-technical) statement of the main result
above.
For example, in fixed-sparsity models such as SparseStack, the expected number 
of four-cycles is \(O_d(n_k^2/k^2)\), so this global condition holds with high
probability by Markov's inequality.

Under \lsfci{}, together with \(n_k/k\to\infty\) and average right degree
\(O(1)\), the intermediate sampled set can be trimmed to a large \(C_4\)-free
bounded-degree subgraph with high probability. From that point, the task is to
extract an \mrt{m} from the \(C_4\)-free subgraph.

The proof runs through the chain
\vspace{0.3em}
\[
\begin{array}{r@{\;}c@{\;}l}
 & \boxed{\text{\lsfci{} + average sparsity}} & \\[0.35em]
\text{Section~\ref{sec:bridge-sampling}} & \flowdown &
\text{average-degree trichotomy}\\[0.35em]
 & \boxed{\begin{array}{c}
\text{low-degree singularity}\\
\text{or weight-compatible tree}\\
\text{in a }C_4\text{-free subgraph} \\
\text{Theorem~\ref{thm:c4-finite-subgraph}}
\end{array}} & \\[1.3em]
\text{Section~\ref{sec:tree-test-vectors}} & \flowdown &
\text{Proposition~\ref{prop:weighted-tree-small-smin}}\\[0.35em]
 & \boxed{\smin=o(1)}. &
\end{array}
\]
\vspace{0.3em}

The key technical result of our work is the sampling theorem for \(C_4\)-free
graphs in Section~\ref{sec:c4-sampling-subgraph}; here we state an informal
version:
\begin{theor*}[Informal statement of Theorem~\ref{thm:c4-finite-subgraph}]
Fix \(d\ge2\) and $\rho >0$. Consider three positive integers $r,k,n$ satisfying 
\( \rho k \le r\le k\le n.\) Let \(G=(I,J,M)\) be a \(C_4\)-free weighted bipartite graph with
\(
|I|=k,\qquad |J|=n,
\)
and right support degrees between \(2\) and \(d\).
Set
\[
\Lambda:=\min\left( \frac r2,\frac nk\right)
\quad\text{and}\quad
 m = \Theta\left( \frac{1}{d^3}\frac{\log\Lambda}{\log(\frac{d}{\rho}\log\Lambda)} \right).
\]
Then for a uniform right sample $\Omega$ of size $r$, we have
\[
\mathbb P\left(
\text{$G[\Omega]$ contains a \wcmrt{m}}
\right) \ge 1- O( \tfrac{d}{\log(\Lambda)}).
\]
\end{theor*}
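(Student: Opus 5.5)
The plan is to find the tree by a \emph{Poissonized first/second moment count} of candidate roots. Weight compatibility is encoded by an assignment of right vertices to left vertices, and the $C_4$-freeness of $G$ supplies the near-independence needed for the second moment.

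\emph{Step 1 (assignment).} For every right vertex $c$, let $\pi(c)$ be a left neighbour of $c$ minimising $|M_{ic}|$ over $i\in N(c)$, with ties broken arbitrarily. If $c$ is used as a child of $\pi(c)$ in the tree, then every outer edge of $c$ has magnitude at least that of the parent edge. This is exactly the weight-compatible requirement of Definition~\ref{def:weight-compatible-rooted-matching-tree}. Set $A_i:=|\pi^{-1}(i)|$; then $\sum_i A_i=n$, so the average of $A_i$ is $n/k\ge\Lambda$.

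\emph{Step 2 (one root).} Call $j$ a \emph{good root} for $\Omega$ if $j\in\Omega$ and each $i\in N(j)$ has at least $m$ sampled vertices in $\pi^{-1}(i)\setminus\{j\}$. Because $G$ is $C_4$-free, two children of the same $i$ share no other left neighbour. Children in different branches share at most one outer vertex, and only through a bounded-degree configuration, which I would remove by discarding a few vertices. Hence a good root yields a \wcmrt{m} in $G[\Omega]$, after checking that the tree is induced, which again uses $C_4$-freeness and degrees $\le d$.

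Write $p=r/n$. For a fixed $j$ whose neighbours all satisfy $A_i\ge n/k$, each sampled count is essentially $\mathrm{Bin}(A_i,p)$ with mean at least $r/k\ge\rho$. The hypergeometric counts are comparable to independent binomials. So
\[
\mathbb P(j\text{ good}\mid j\in\Omega)\ \gtrsim\ \Bigl(\tfrac{(\rho/2)^m}{m!}\Bigr)^{d}\ \ge\ \Lambda^{-1/2},
\]
by the choice $m\asymp d^{-3}\log\Lambda/\log(\tfrac d\rho\log\Lambda)$. Indeed $dm\log(m d/\rho)\le\tfrac12\log\Lambda$, and this is precisely where the formula for $m$ comes from.

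\emph{Step 3 (many disjoint roots and concentration).} Let $X$ be the number of good roots. There are about $\Lambda$ admissible roots available in $\Omega$ (this needs Step 4), and each is good with probability at least $\Lambda^{-1/2}$, so $\mathbb E X\gtrsim\Lambda^{1/2}$. For the variance, two roots at graph distance greater than $4$ have disjoint candidate sets, and their indicators are negatively correlated under sampling without replacement. Roots at distance at most $4$ contribute covariance controlled by the number of such pairs, which is bounded via the degrees and the $C_4$-free structure. Chebyshev then gives $\mathbb P(X=0)=O(1/\mathbb E X)$, which is well within $O(d/\log\Lambda)$.

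\emph{Step 4 (main obstacle: roots with heavy neighbourhoods).} Only the \emph{average} of $A_i$ is large. A root needs \emph{all} $d$ of its neighbours to be heavy, and a priori most right vertices may touch some light left vertex. I would handle this by a dyadic pigeonhole over scales of $A_i$.

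Split $[1,n]$ into $L\asymp\log\Lambda$ scales. Call $i$ heavy at threshold $t$ if $A_i\ge t$, and pass to the root threshold where the proportion of right vertices with all neighbours heavy does not collapse between consecutive scales. A vertex that fails has one of at most $d$ neighbours light. Summing over the $d$ neighbour slots and the $L$ scales, only an $O(d/L)=O(d/\log\Lambda)$ fraction of the mass $\sum_iA_i=n$ can be lost before some scale supplies at least $\Lambda$ admissible roots with all neighbours at threshold about $n/k$.

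If this averaging fails, the fallback is to replace $\pi$ by a reassignment that sends each light left vertex's children to heavier neighbours. That is allowed only when weight compatibility still holds, which is plausibly the role of the ``light root'' notion. I expect this step, not the moment computation, to be the real difficulty, and it is the source of the $d/\log\Lambda$ failure probability.
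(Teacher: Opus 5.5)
There is a genuine gap. It starts in Step~1, and it is also what creates the obstacle you leave open in Step~4.

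\textbf{Step 1 encodes the wrong condition.} Weight compatibility (Definition~\ref{def:weight-compatible-rooted-matching-tree}) compares two edges at the same \emph{left} vertex: $|M_{ic}|\ge |M_{ij_\star}|$ for $i\in I_\star$ and $c\in\mathcal K_i$, i.e.\ child edge versus root edge at $i$. This is exactly what gives $|v_c|\le 1/m$ in Proposition~\ref{prop:weighted-tree-small-smin}. Your choice $\pi(c)=\arg\min_{i\in N(c)}|M_{ic}|$ compares edges at the same \emph{right} vertex $c$ and carries no information about $|M_{ij_\star}|$. So a good root in your sense need not yield a weight-compatible tree. The inequality it does give, $|M_{\ell c}|\ge |M_{\pi(c)c}|$, even points the wrong way for bounding the outer entries $|M_{\ell c}v_c|$.

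\textbf{Step 4 cannot be repaired.} A fixed partition $\pi$ can leave no usable root at all, so no dyadic pigeonhole helps. Take $I=X\sqcup Y$ with $|X|=|Y|=k/2$. Let the right vertices be all pairs $c=(x,y)$ with $N(c)=\{x,y\}$; this is $C_4$-free with $n=k^2/4$. Put weight $2$ on every edge at $X$ and weight $1$ on every edge at $Y$. Then $\pi(c)\in Y$ for all $c$, so $A_x=0$ for every $x\in X$, while every right vertex has a neighbour in $X$. Hence no right vertex has all neighbours heavy at any positive threshold. Reassigning children to $X$ would violate your own compatibility rule. Yet the theorem holds trivially here, since the weights are constant at each left vertex.

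\textbf{What the paper does instead.} The missing idea is to make the weight condition local at left vertices and proportional to degrees. The paper lets $N^\dagger_G(i)$ be the $\lceil D_i/(32d)\rceil$ neighbours of $i$ with largest $|M_{ic}|$. It takes reservoirs inside $N^\dagger_G(i)$ and requires the root to be \light{}, i.e.\ in no $N^\dagger_G(i)$. This excludes at most $n/16$ right vertices and makes compatibility automatic (Corollary~\ref{cor:heavy-reservoirs-weight-compatible}). Since $|N^\dagger_G(i)|\ge D_i/(32d)$, admissibility of a root only involves true degrees, and your Step~4 becomes one line: the right vertices touching some $i$ with $D_i<n/(8k)$ number at most $k\cdot n/(8k)=n/8$ (Lemma~\ref{lem:many-candidate-roots}).

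The paper then makes the reservoirs disjoint by a Hall-type $b$-matching and captures one of $K$ disjoint witness blocks by sequential exposure. The $O(d/\log\Lambda)$ term does not come from any degree pigeonhole. It comes from thinning to an intermediate set of size about $\lambda_\star r$ and applying Markov's inequality to the overlap parameter, using $\eta(G)\le 2dk/n$ (Lemma~\ref{lem:c4-sparse-overlap}).

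\textbf{Smaller points.} Your second-moment count could replace the capture lemma. But once built on the heavy sets, the candidate sets of roots at distance $4$ genuinely overlap, and in the example above every pair of right vertices is at distance at most $4$. So the covariance needs an actual estimate of the effect of small overlaps, not a count of nearby pairs. Also, $m$ sampled children per branch is not enough for your pruning; you need about $m(1+(d-1)^2)$, as in Lemma~\ref{lem:reservoirs-produce-tree}. That requirement is the source of the factor $d^{-3}$ in $m$, not your inequality $dm\log(md/\rho)\le\frac12\log\Lambda$, which would only give $d^{-1}$.
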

The notion of weight-compatible above was a technical requirement to ensure that the test vector construction works in the weighted case.  
Further, we have the following reduction from \lsfci{} to the $C_4$-free support subgraph.

\begin{prop*}[Average-degree trichotomy, asymptotic form of Proposition~\ref{prop:average-degree-bridge-alternatives}]
\label{prop:intro-bridge}
Let $G_k$ be a sequence of weighted bipartite graphs with $k$ left vertices,
$n_k$ right vertices, average right support-degree $O(1)$, average nonzero
weight magnitude at most $1$, $n_k/k\to\infty$, and \lsfci{}. Let $r_k$
satisfy $k/r_k=O(1)$. Then, for a uniform right sample $\Omega_k$ of size $r_k$, with
probability $1-o(1)$ one of the following holds:
\begin{enumerate}
\item $G_k[\Omega_k]$ has an isolated sampled right vertex;
\item $G_k[\Omega_k]$ has two degree-one sampled right vertices with the same
left neighbor;
\item $G_k[\Omega_k]$ contains a \wcmrt{m_k} inside a
$C_4$-free subgraph of bounded maximum right degree, with nonzero weights
bounded by a constant depending only on the average-degree bound, where
\[
m_k\to\infty.
\]
\end{enumerate}
\end{prop*}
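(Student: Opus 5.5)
The statement just above is the asymptotic Average-degree trichotomy (the informal version of Proposition~\ref{prop:average-degree-bridge-alternatives}). I would prove it by a two-stage sampling reduction to the $C_4$-free sampling theorem, Theorem~\ref{thm:c4-finite-subgraph}.

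\textbf{Step 1: discard bad columns.} Fix a large constant $D$, depending on the average-degree bound and the average-weight bound. Call a right vertex bad if any of the following holds: its support degree exceeds $D$; it has a nonzero weight exceeding $D$; or it lies on a four-cycle of $G_k$. By Markov's inequality, the first two classes have proportion $O(1/D)$ among the $n_k$ right vertices.

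\textbf{Step 2: a $C_4$-free intermediate sample.} Realize $\Omega_k$ in two stages. First draw an intermediate set $\Omega'_k$ of size $r'_k=\lceil r_k/\theta\rceil$ for a small constant $\theta$. Then draw $\Omega_k\subset\Omega'_k$ uniformly. A four-cycle survives into $G_k[\Omega'_k]$ only if both of its right vertices are sampled, which happens with probability about $(r'_k/n_k)^2$. By the \lsfci{} assumption (or the cleaner condition \eqref{eq: few cycles}), the expected number of surviving cycles is therefore $o(n_k^2/k)\cdot k^2/n_k^2=o(k)$. So with probability $1-o(1)$ we can delete $o(k)$ vertices of $\Omega'_k$ and obtain a $C_4$-free graph. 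The heavy vertices number $O(r'_k/D)$; I would delete them too. Write $\good{\Omega'}$ for the remaining sampled vertices. Its induced graph is $C_4$-free, has right degree at most $D$, and has weights at most $D$.

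\textbf{Step 3: the three alternatives.} If $\Omega_k$ contains an isolated right vertex, alternative (1) holds. If it contains two degree-one vertices sharing a left neighbor, alternative (2) holds. Otherwise, I restrict attention to $\Omega_k\cap\good{\Omega'}$, keeping only its right vertices of degree between $2$ and $D$, which is the setting of Theorem~\ref{thm:c4-finite-subgraph}.

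The main obstacle is the degree-one vertices. A degree-one vertex cannot be fed into the theorem, but it can still be used as a leaf. I would handle this by a dichotomy on their number.
\begin{enumerate}
\item If a constant fraction of the sampled vertices have degree one, then $\Theta(k)$ of them are placed among the $k$ left vertices. Under the uniform second-stage sampling, a birthday-type argument shows that two of them hit the same left vertex with probability $1-o(1)$, giving alternative (2). This needs care when the degree-one vertices concentrate on few left vertices, but concentration only helps collisions.
\item Otherwise the vertices of degree between $2$ and $D$ form a proportional subsample, with $\rho$ a positive constant. Conditionally on its size, this subsample is uniform within the good vertices of degree between $2$ and $D$. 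Since $n_k/k\to\infty$ and $r_k=\Theta(k)$, we have $\Lambda\to\infty$. Theorem~\ref{thm:c4-finite-subgraph}, with $d=D$, then yields a \wcmrt{m_k} with $m_k\to\infty$ and failure probability $O(D/\log\Lambda)=o(1)$.
\end{enumerate}

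Two points remain to be checked. First, the tree must stay induced in $G_k[\Omega_k]$. This holds because the tree contains all neighbors of its right vertices, so removing other columns does not affect it. Second, weight compatibility must refer to the original weights, which is automatic since we never rescale them.

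Finally, $D$ and $\theta$ are sent slowly to their limits along $k$, so that all the error probabilities are $o(1)$.
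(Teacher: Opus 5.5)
Your route is the paper's. You use two-stage exposure and clean four-cycles and heavy columns inside an intermediate sample. You dispose of degree-$0$ and degree-$1$ columns through alternatives (1)--(2); your ``concentration only helps collisions'' is the Maclaurin step of Lemma~\ref{lem:many-degree-one-right-vertices}. You then apply Theorem~\ref{thm:c4-finite-subgraph} to the cleaned intermediate graph.

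\textbf{The gap: the scale of $\Omega'_k$.} This is the one delicate point, and as written it fails. Theorem~\ref{thm:c4-finite-subgraph} is applied to $G_k[J_{\rm sub}]$, where $J_{\rm sub}\subseteq\Omega'_k$ is the cleaned set of vertices of degree in $[2,D]$. So its parameter is $\Lambda=\min(r_{\rm sub}/2,\,|J_{\rm sub}|/k)\asymp\min(k,1/\theta)$. The ratio $n_k/k$ of the original graph plays no role there. Your sentence ``since $n_k/k\to\infty$ and $r_k=\Theta(k)$, we have $\Lambda\to\infty$'' is therefore wrong. With $\theta$ a fixed constant, as in your Step~2, both $K_\star$ and $m_\star$ stay bounded.

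So $|\Omega'_k|/k\to\infty$ is forced, and your four-cycle count must then be redone. It is not $o(k)$: it is of order $\varepsilon_k|\Omega'_k|^2/k$. That is a negligible fraction of $|\Omega'_k|$ only if $|\Omega'_k|=o(k/\varepsilon_k)$, and you also need $|\Omega'_k|\le n_k$. The intermediate size is thus pinned between $k$ and $\min(k/\varepsilon_k,n_k)$. This is exactly the paper's choice $\widetilde n_k=\min\{\lceil c\,\varepsilon_k^{-1}k\rceil,n_k\}$, and the tree width then grows with $L_k=\min\{k,n_k/k,\varepsilon_k^{-1}\}$. Your closing remark about sending $\theta$ slowly to its limit gestures at this, but this tradeoff is the actual content and must be made explicit.

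For instance, take $|\Omega'_k|=k\min\{n_k/k,\varepsilon_k^{-1}\}^{1/2}$. Then the expected number of surviving partner pairs is at most $\tfrac12\sqrt{\varepsilon_k}\,|\Omega'_k|$, and Markov suffices. The paper instead works at the extreme scale $c\,\varepsilon_k^{-1}k$. There the expectation is only a small constant times $\widetilde n_k$, which is why it needs the second-moment argument of Proposition~\ref{prop:produce-c4-good-subgraph}.

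\textbf{Two further steps fail as written.} First, under the local condition there is no global $o(n_k^2/k)$ bound on partner pairs. Each vertex of the exceptional set $B_{\mathrm{fc}}$ may have up to $n_k$ partners. Your count is valid only after you delete $\Omega'_k\cap B_{\mathrm{fc}}$, whose expected size is $o(|\Omega'_k|)$. You then count only pairs of non-exceptional vertices, each of which has at most $\varepsilon_k n_k/k$ partners. Relatedly, your Step~1 class ``lies on a four-cycle of $G_k$'' need not be small at all; replace it by $B_{\mathrm{fc}}$.

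Second, $D$ must stay fixed; the paper uses degree cutoff $40d_{\rm av}$ and weight cutoff $10d_{\rm av}$. A constant fraction $O(d_{\rm av}/D)$ of heavy columns is harmless once you control their number in $\Omega'_k$ by a hypergeometric Chernoff bound rather than Markov. Letting $D\to\infty$ would contradict the bounded degree and bounded weights required in alternative (3).

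Your remaining points are correct: the tree persists in $G_k[\Omega_k]$, the original weights are used throughout, and the sample-based split on degree-one columns works.
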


The first two cases are the price of working with average right degree, rather than a model in which every sampled column has degree $d$. Average control can leave many degree-zero or degree-one columns.
If the sample contains an isolated right vertex, or two degree-one right
vertices supported on the same left vertex, the corresponding sampled matrix is
already singular. If neither low-degree obstruction occurs, the trichotomy argument
truncates to a bounded-degree subgraph and produces the tree case.
The bounded-degree cutoff gives at most $O(m_k)$ third-layer vertices, so
$m_k\to\infty$ forces the desired small singular value.

\subsubsection{Extracting the $m$-matching rooted tree from a $C_4$-free subgraph}

\paragraph{\bf{Witness blocks.}}
The $C_4$-free property helps us find such trees through a simpler combinatorial object.
The rough idea is as follows. We still have the root vertex $j_\star$ and its
left neighbors $I_\star=N_G(j_\star)$. To build an \mrt{m}
directly, one would need to find disjoint right-neighbor subsets
${\mathcal K}_i \subseteq N_G(i)\setminus\{j_\star\}$ for each $i \in I_\star$
so that all the outer left neighborhoods $N_G({\mathcal K}_i)\setminus\{i\}$
are disjoint. Instead, we first search for larger auxiliary right-neighbor
sets ${\mathcal R}_i$ of each $i \in I_\star$, each of size at least
$m(1+(d-1)^2)$, which we call \reservoir{}s.  The $C_4$-freeness forces the
\reservoir{}s to be disjoint regardless of how they are chosen, and
$C_4$-freeness together with the right-degree bound ensures that for every $j \in \cup_{i \in I_\star} {\mathcal R}_i$, the number of other $j'\in \cup_{i \in I_\star} {\mathcal R}_i$, sharing a neighbor with $j$ outside of $I_\star$, is bounded. Thus we can greedily prune the \reservoir{}s to find
${\mathcal K}_i \subseteq {\mathcal R}_i$ for each $i \in I_\star$ so that all
the outer left neighborhoods $N_G({\mathcal K}_i)\setminus\{i\}$ are disjoint,
forming the desired \mrt{m}. See
Lemma~\ref{lem:reservoirs-produce-tree} for the formal statement.

The combination of the root and its \reservoir{}s is the intermediate object
we want the random sample to capture. More precisely, if each
${\mathcal R}_i$, $i\in I_\star$, has size at least
$m(1+(d-1)^2)$, then the right-vertex set
\[
{\mathcal W}(j_\star):=\{j_\star\}\cup
\bigcup_{i \in I_\star}{\mathcal R}_i
\]
is a \witnessblock{}. Capturing this whole block in a uniform sample is easier
to analyze than capturing the already-pruned tree.

\paragraph{\bf Overall strategy in the $C_4$-free case.}
Now suppose that $G=(I,J,E)$ is $C_4$-free, with $|I|=k$ and $|J|=n$, where
$n$ is much larger than $k$. We choose a uniform random subset
$\Omega \subseteq J$ of size $r$, with $r$ of order $k$, and want to show that
with high probability the induced subgraph $G[\Omega]$ contains a
\witnessblock{}.

To justify the existence of many
pairwise disjoint \witnessblock{}s, we rely on the property that typical pair of right vertices don't share common left neighors to guarantee the existence of many root candidates with good properties. Then a multi-matching argument chooses pairwise disjoint heavy \reservoir{}s around
their neighboring left vertices, producing many disjoint \witnessblock{}s.

Suppose $\lambda=n/r$ and we have $K$ disjoint
\witnessblock{}s, each of size at most $B$. A fixed block is then contained in
a uniform $r$-subset with probability roughly $\lambda^{-B}$, and formally at
least $(8\lambda)^{-B}$ in Lemma~\ref{lem:capture-disjoint-blocks}. Because the
blocks are disjoint, sequential exposure gives the heuristic failure bound
\[
(1-c\lambda^{-B})^K,
\]
With compatible choices of $\lambda$, $B$, and $K$, this probability is very small. This is the main idea behind the proof of
Theorem~\ref{thm:c4-finite-subgraph}.

\subsection{Organization}

In Section~\ref{sec:tree-test-vectors} we prove that matching rooted trees force small
least singular values.  In Section~\ref{sec:c4-sampling-subgraph} we prove the
$C_4$-free sampling theorem.  In Section~\ref{sec:bridge-sampling} we prove the
average-degree trichotomy, reducing \lsfci{} to the $C_4$-free subgraph case and the
low-degree singular cases.  In Section~\ref{sec:matrix-theorem} we combine
the graph cases with the test vectors to prove the matrix theorem.
Section~\ref{sec:sparsestack} applies the result to SparseStack and OSI. We
close with open problems.

\bigskip

{\bf Funding acknowledgement.}
K.T. was partially supported by NSF grant DMS 2452120.

{\bf Acknowledgement of AI Assistance.} The authors used ChatGPT for language editing, stylistic suggestions, and assistance in developing and checking some proof arguments during the preparation of this manuscript. All mathematical statements, proofs, and final wording were independently reviewed and verified by the authors, who take full responsibility for the content of the paper.

\section{From matrices to graphs}
\label{sec:tree-test-vectors}

This section records the analytic obstruction used throughout the paper.  The
object is purely local in the support of a weighted bipartite graph: once a
sampled support contains a weight-compatible matching rooted tree, the
corresponding weighted matrix has an explicit test vector with small image.

\subsection{Graph Conventions and Matching Rooted Trees}
A matrix can also be viewed as a weighted bipartite graph. We will use the following conventions throughout the paper.
\begin{definition}[Weighted bipartite graph and support graph]
\label{def:weighted-bipartite-graph}
Let $G=(I,J,M)$ be a weighted bipartite graph, where
\[
M\in\mathbb R^{I\times J}.
\]
We call $I$ the left vertex set and $J$ the right vertex set. We declare that
\begin{align}
  \label{eq:edge-weight}
(i,j) \in I\times J \mbox{ is an edge}
\quad
\Leftrightarrow
\quad
M_{ij}\ne0
\end{align}
and, when $(i,j)$ is an edge, its weight is $M_{ij}$.
With this convention, we define the support of $G$ to be the set of edges
\[
E_M:=\{(i,j)\in I\times J:M_{ij}\ne0\}.
\]
By the support graph of $G$ we mean the unweighted bipartite graph $(I,J,E_M)$.
\end{definition}
Indeed, many technical statements in the paper only depend on the support, and not on the actual weights. We will often blur the distinction between $G$ and its support graph $(I,J,E_M)$. In other words, the majority of the arguments are purely combinatorial, and the weights only come into play when we construct the test vector for the matching rooted tree.

For a vertex $v$, write $N_G(v)$ for its neighbors in this support graph. Degrees always mean support degrees.
For $\Omega\subseteq J$, write
\[
G[\Omega]:=(I,\Omega,M_{\cdot,\Omega})
\]
for the right-induced weighted subgraph on $\Omega$.

The following dictionary will be used repeatedly.
\[
\begin{array}{@{}ll@{}}
\textbf{Matrix language} & \textbf{Graph language} \\
\text{row of }M & \text{left vertex} \\
\text{column of }M & \text{right vertex} \\
\text{sampled columns }M_\Omega & \text{right-induced weighted subgraph }G[\Omega] \\
\text{column sparsity} & \text{right degree} \\
\text{zero sampled column} & \text{isolated sampled right vertex} \\
\text{two degree-one columns on one row} &
  \text{two degree-one right vertices sharing the same left neighbor} \\
\end{array}
\]

\begin{definition}[$m$-Matching Rooted Tree]
\label{def:rooted-matching-tree}

Let $G=(I,J,M)$ be a weighted bipartite graph. Fix a right vertex $j_\star\in J$, 
and set
\[
I_\star:=N_G(j_\star).
\]
We say that $G$ contains an $m$-matching rooted tree at $j_\star$ if, for each $i\in I_\star$, there is a set
\[
\mathcal K_i\subseteq N_G(i)\setminus\{j_\star\},
\qquad
|\mathcal K_i|=m,
\]
such that
\[
N_G(j)\cap I_\star=\{i\}
\qquad
\forall i\in I_\star,\ \forall j\in\mathcal K_i,
\]
the sets $\mathcal K_i$, $i\in I_\star$, are pairwise disjoint, and the outer left neighborhoods
\[
N_G(j)\setminus I_\star,
\qquad
j\in\bigcup_{i\in I_\star}\mathcal K_i,
\]
are pairwise disjoint. Indeed, these vertices form a three-layer rooted tree in the support graph, including all neighbors of the right vertices in the tree:
\begin{itemize}
  \item $j_\star$ is the root,
  \item $I_\star$ is the first left layer, 
  \item the vertices in $\bigcup_i\mathcal K_i$ are the second right layer, and 
  \item the sets $N_G(j)\setminus I_\star$ for $j \in \bigcup_{i\in I_\star}\mathcal K_i$ form the third left layer.
\end{itemize}

\end{definition}

\begin{remark}[Automatic disjointness in the $C_4$-free case]
If $G$ is $C_4$-free, then the condition $N_G(j)\cap I_\star=\{i\}$ and the
pairwise disjointness of the sets $\mathcal K_i$, $i\in I_\star$, are
automatic. Indeed, any violation would give a second-layer right vertex $j$
and the root $j_\star$ two common left neighbors.
\end{remark}

\begin{definition}[Weight-compatible matching rooted tree]
\label{def:weight-compatible-rooted-matching-tree}
Let $G=(I,J,M)$ contain an \mrt{m} at $j_\star$, witnessed by
sets $\mathcal K_i$, $i\in I_\star=N_G(j_\star)$. We say that this rooted tree
is \emph{weight-compatible} if
\[
|M_{i,j}|\ge |M_{i,j_\star}|
\qquad
\forall i\in I_\star,\ \forall j\in\mathcal K_i.
\]
We say that $G$ contains a weight-compatible $m$-matching rooted tree if some
choice of root and witness sets has this property.
\end{definition}

\subsection{Weighted Graphs}

\begin{prop}[Weight-Compatible Matching Rooted Tree Implies Small Singular Value]
\label{prop:weighted-tree-small-smin}

Let $G=(I,J,M)$ be a real weighted bipartite graph. Suppose $G$ contains a
\wcmrt{m}, witnessed by a root $j_\star$ and sets
$\mathcal K_i$, $i\in I_\star=N_G(j_\star)$. 
Define the third left layer
\[
\mathcal L_{\rm out}:=
\bigcup_{i\in I_\star}
\bigcup_{j\in\mathcal K_i}
\left(N_G(j)\setminus I_\star\right),
\]
and define its maximum weight by
\[
A_{\rm out}:=
\max\left\{
|M_{\ell,j}|:
i\in I_\star,\ j\in\mathcal K_i,\ \ell\in N_G(j)\setminus I_\star
\right\},
\]
with the convention $A_{\rm out}=0$ if the set is empty. Then
\[
\smin(M)\le
\frac{A_{\rm out}}{m}\sqrt{|\mathcal L_{\rm out}|}.
\]

\end{prop}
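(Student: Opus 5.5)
We construct an explicit test vector $v\in\mathbb R^J$ supported on the right vertices of the tree and bound $\|Mv\|_2/\|v\|_2$. Here $\smin(M)$ means $\min_{\|v\|_2=1}\|Mv\|_2$, which is the meaning used for the column submatrices $M_\Omega$. Set $v_{j_\star}=1$. For each $i\in I_\star$ and $j\in\mathcal K_i$, set
\[
v_j:=-\frac{M_{i,j_\star}}{m\,M_{i,j}}.
\]
This is well defined since $M_{i,j}\neq0$, because $j\in N_G(i)$. All other coordinates are zero. Since $\|v\|_2\ge|v_{j_\star}|=1$, it suffices to show $\|Mv\|_2\le \frac{A_{\rm out}}{m}\sqrt{|\mathcal L_{\rm out}|}$.

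Next we compute $(Mv)_\ell=\sum_j M_{\ell j}v_j$ row by row, splitting into three cases for $\ell$. First, take $\ell=i\in I_\star$. The nonzero contributions come from $j_\star$ and from those tree vertices $j\in\bigcup_{i'}\mathcal K_{i'}$ with $i\in N_G(j)$. By the condition $N_G(j)\cap I_\star=\{i'\}$ for $j\in\mathcal K_{i'}$, this happens only for $j\in\mathcal K_i$. So
\[
(Mv)_i=M_{i,j_\star}+\sum_{j\in\mathcal K_i}M_{i,j}\Bigl(-\frac{M_{i,j_\star}}{mM_{i,j}}\Bigr)=M_{i,j_\star}-M_{i,j_\star}=0,
\]
using $|\mathcal K_i|=m$. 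Second, take $\ell\notin I_\star\cup\mathcal L_{\rm out}$. Then $\ell$ is adjacent to no supported column: not to $j_\star$, since $N_G(j_\star)=I_\star$, and not to any $j\in\mathcal K_i$, by the definition of $\mathcal L_{\rm out}$. Hence $(Mv)_\ell=0$.

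Third, take $\ell\in\mathcal L_{\rm out}$. Since the outer neighborhoods $N_G(j)\setminus I_\star$ are pairwise disjoint, $\ell$ lies in exactly one of them, say for $j\in\mathcal K_i$. Moreover $\ell\notin I_\star$, so $M_{\ell j_\star}=0$. Therefore $(Mv)_\ell=M_{\ell j}v_j$, and
\[
|(Mv)_\ell|=\frac{|M_{\ell j}|\,|M_{i,j_\star}|}{m\,|M_{i,j}|}\le\frac{A_{\rm out}}{m}.
\]
This step uses weight-compatibility $|M_{i,j}|\ge|M_{i,j_\star}|$ to get $|v_j|\le 1/m$, together with the definition of $A_{\rm out}$.

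Summing the squares over the at most $|\mathcal L_{\rm out}|$ nonzero rows gives the claimed bound. There is no serious obstacle here. The only delicate point is the bookkeeping in the first case: one must confirm that no column from another branch $\mathcal K_{i'}$, $i'\neq i$, hits row $i$. This is exactly the condition $N_G(j)\cap I_\star=\{i'\}$ in the definition of a matching rooted tree.
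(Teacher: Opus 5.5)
Your proposal is correct and follows the paper's proof essentially step for step. It uses the same test vector $v_{j_\star}=1$, $v_j=-M_{i,j_\star}/(mM_{i,j})$, the cancellation on rows in $I_\star$ via $N_G(j)\cap I_\star=\{i\}$, and the per-row bound $A_{\rm out}/m$ on outer rows, which follows from disjoint outer neighborhoods and weight compatibility, before concluding with $\|v\|_2\ge 1$.
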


\begin{proof}

For $j\in\bigcup_{i\in I_\star}\mathcal K_i$, let $i(j)$ be the unique vertex
of $I_\star$ with $j\in\mathcal K_{i(j)}$. By the definition of an
\mrt{m}, $j$ is adjacent to no other vertex of $I_\star$.

Define $v\in\mathbb R^J$ by
\[
v_{j_\star}=1,\qquad
v_j=-\frac{M_{i(j),j_\star}}{mM_{i(j),j}}
\quad\text{for }j\in\bigcup_{i\in I_\star}\mathcal K_i,
\]
and put $v_j=0$ on all remaining right vertices. The denominators are nonzero
because $j\in N_G(i(j))$. Also $\|v\|_2\ge1$, and weight compatibility gives
$|v_j|\le1/m$ for every selected second-layer right vertex.

For every $i\in I_\star$, the second-layer contributions cancel the root
contribution:
\[
(Mv)_i
=
M_{i,j_\star}
\;+\;
\sum_{j\in\mathcal K_i}M_{i,j}
\left(-\frac{M_{i,j_\star}}{mM_{i,j}}\right)
=0.
\]

For $\ell\notin I_\star$, the root contributes nothing. The outer left
neighborhoods of the selected right vertices are pairwise disjoint, so at most
one selected right vertex contributes to row $\ell$. Hence every nonzero
coordinate of $Mv$ outside $I_\star$ has magnitude at most $A_{\rm out}/m$.

Therefore
\[
\|Mv\|_2^2
\le
|\mathcal L_{\rm out}|\frac{A_{\rm out}^2}{m^2}.
\]
Since $\|v\|_2\ge1$, this proves the claimed bound.

\end{proof}

\begin{remark}[Signed right-regular case]
If the support is right $d$-regular and all nonzero entries of $M$ are signs,
then $A_{\rm out}=1$ unless the third left layer is empty, in which case the
bound is trivial. Moreover, the root has $d$ first-layer neighbors, so there
are $md$ selected second-layer right vertices. Each has at most $d-1$ outer
left neighbors, and therefore
\[
|\mathcal L_{\rm out}|\le md(d-1).
\]
Thus Proposition~\ref{prop:weighted-tree-small-smin} gives
\[
\smin(M)\le \sqrt{\frac{d(d-1)}m}\le \frac d{\sqrt m}.
\]
\end{remark}

\section{The \texorpdfstring{$C_4$}{C4}-free Sampling Theorem}
\label{sec:c4-sampling-subgraph}

\subsection{Setup and Proof Roadmap}
In this section we use the graph conventions from Section~\ref{sec:tree-test-vectors}. The goal is to show that a uniform sample of $r$ right vertices typically contains a \wcmrt{m}, where $m$ grows with $r$ and $n/k$. (See the informal statement of Theorem~\ref{thm:c4-finite-subgraph} in the introduction, and the precise statement later in this section below.)

We say that a bipartite graph $G$ is $C_4$-free if it has no $4$-cycles, or equivalently if no two distinct right vertices have two common left neighbors:
\[
|N_G(j)\cap N_G(j')|\le1
\qquad
\forall j\ne j'\in J.
\]
Thus two right vertices can share at most one left neighbor.
\phantomsection\label{def:overlap-parameter}
When $|J|\ge2$, we measure how often such a common neighbor occurs by the
\overlapparameter{}
\[
\eta(G):=
\mathbb P\bigl(N_G(\mathsf j_1)\cap N_G(\mathsf j_2)\ne\emptyset\bigr),
\]
where $(\mathsf j_1,\mathsf j_2)$ is a uniformly chosen ordered pair of distinct right vertices.
For bounded right degrees and \(n/k\to\infty\), \(C_4\)-freeness forces
\(\eta(G)=o(1)\); see Lemma~\ref{lem:c4-sparse-overlap}.

\begin{definition}[Roots, reservoirs, and witness blocks]
\label{def:c4-witness-block}
In this section a root, or candidate root, is a right vertex $j_\star\in J$.
Its first left layer is
\[
I_\star:=N_G(j_\star).
\]
For $i\in I_\star$, an $L$-\reservoir{} at $i$ is a set
\[
\mathcal R_i\subseteq N_G(i)\setminus\{j_\star\}
\]
of auxiliary right vertices with $|\mathcal R_i|\ge L$. 
After choosing, for each $i\in I_\star$, an $L$-\reservoir{} $\mathcal R_i$,
the associated $L$-\witnessblock{} at $j_\star$ is the subset of right vertices
\[
{\mathcal W}(j_\star):=\{j_\star\}\cup\bigcup_{i\in I_\star}\mathcal R_i.
\]
The \reservoir{}s $\mathcal R_i$, $i\in I_\star$, witness this block.
\end{definition}
\begin{remark}
  Due to the $C_4$-free condition, the \reservoir{}s ${\mathcal R}_i$, $i \in I_\star$, are pairwise disjoint. 
\end{remark}

For a given positive integer $d \ge 2$, let
$$
  \beta_d := 1+(d-1)^2.
$$
If we have an $L$-\witnessblock{} at $j_\star$ with $L\ge \beta_d m$, then the pruning lemma, Lemma~\ref{lem:reservoirs-produce-tree}, relies on the $C_4$-free condition to greedily choose subfamilies
$\mathcal K_i\subseteq\mathcal R_i$ and produces the required \mrt{m} at $j_\star$. The weighted refinement below chooses \light{} roots and heavy \reservoir{}s so that this tree is automatically weight-compatible.
Informally, a heavy \reservoir{} at a left vertex consists of right neighbors with
large incident weights, while a \light{} root is not among those heavy neighbors
at any adjacent left vertex; the precise definition is
Definition~\ref{def:heavy-neighbor-light-root}.

A high level overview of the proof is as follows. We first show that a random $\lambda r$-subset of the right vertices contains many \light{} candidate roots with large heavy \reservoir{}s, where $\lambda>2$ is a parameter that grows slowly with $r$ and $n/k$. Then we show that a uniform $r$-subset of the right vertices captures at least one complete \witnessblock{} with high probability. Finally, the pruning lemma and the heavy-reservoir corollary produce a \wcmrt{m} from this captured block,
establishing the main result. 

The proof uses four scales. First, a small \overlapparameter{} produces many candidate right roots. The number of candidate roots is denoted by $K$; its allowable size is limited by the \overlapparameter{} $\eta(G)$ and by the final sample size $r$.

Second, for a target width $m$, we build heavy \reservoir{}s around the left neighbors of each candidate root. The \reservoir{} threshold is
\[
L=m\beta_d = m\bigl(1+(d-1)^2\bigr).
\]

Third, each root together with its \reservoir{}s forms a \witnessblock{}. The size of one block is at most
\[
B=1+dL.
\]
Thus $B$ is of order $d^3 m$.

Fourth, the random sample must capture one complete \witnessblock{}. If the ambient right side has size about $\lambda r$, then a fixed block of size $B$ is captured with probability at least $(8\lambda)^{-B}$. Intuitively, with $K$ disjoint attempts, the failure probability is therefore bounded by
\[
(1-(8\lambda)^{-B})^K\le \exp\{-K(8\lambda)^{-B}\}.
\]
The convenient finite condition
\[
(8\lambda)^{2B}\le K
\]
turns this into the cleaner error bound $\exp(-\sqrt K)$.

The exact parameter configurations in this section are chosen only to make these four steps compatible.

\begin{lemma}[Basic consequences of $C_4$-freeness]
\label{lem:c4-basic}

Let $G=(I,J,M)$ be $C_4$-free.

\begin{enumerate}
\item If $i\in I$ and $j,j'\in N_G(i)$ are distinct, then
\[
(N_G(j)\setminus\{i\})\cap (N_G(j')\setminus\{i\})=\emptyset.
\]

\item If $j_\star\in J$, $I_\star=N_G(j_\star)$, and
\[
j\ne j_\star,
\qquad
N_G(j)\cap I_\star\ne\emptyset,
\]
then $j$ is adjacent to a unique vertex of $I_\star$.

\item If $J'\subseteq J$, then $G[J']$ is $C_4$-free.

\end{enumerate}

\end{lemma}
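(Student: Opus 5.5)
All three items follow directly from the characterization of $C_4$-freeness stated at the start of this section: $|N_G(j)\cap N_G(j')|\le 1$ for all distinct $j,j'\in J$. The plan is to argue each item by contradiction, exhibiting two distinct right vertices with two common left neighbors.

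For item (1), suppose some $\ell$ lies in both $N_G(j)\setminus\{i\}$ and $N_G(j')\setminus\{i\}$. Then $\ell\ne i$, and both $i$ and $\ell$ are common neighbors of $j$ and $j'$. Hence $|N_G(j)\cap N_G(j')|\ge2$, so $i-j-\ell-j'-i$ is a $4$-cycle, contradicting $C_4$-freeness.

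For item (2), the hypothesis $N_G(j)\cap I_\star\ne\emptyset$ gives at least one neighbor of $j$ in $I_\star$. Since $I_\star=N_G(j_\star)$, we have $N_G(j)\cap I_\star=N_G(j)\cap N_G(j_\star)$. Because $j\ne j_\star$, $C_4$-freeness says this intersection has at most one element, so it has exactly one, which is the required uniqueness.

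For item (3), note that $G[J']=(I,J',M_{\cdot,J'})$ keeps exactly the columns indexed by $J'$. So for $j\in J'$ the neighborhood $N_{G[J']}(j)$ equals $N_G(j)$. Therefore for distinct $j,j'\in J'$ we get $|N_{G[J']}(j)\cap N_{G[J']}(j')|=|N_G(j)\cap N_G(j')|\le1$, which is $C_4$-freeness of $G[J']$.

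None of the steps is a real obstacle. The only care needed is in item (3): one must observe that right-induced subgraphs keep the full left vertex set $I$ and do not alter right neighborhoods. This is what makes $C_4$-freeness inherit to $G[J']$, which is how the lemma will be applied to sampled subgraphs $G[\Omega]$.
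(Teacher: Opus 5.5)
Your proposal is correct and follows essentially the same argument as the paper. Each item reduces to the fact that two distinct right vertices share at most one left neighbor, and item (3) uses that passing to $G[J']$ leaves right neighborhoods unchanged.
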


\begin{proof}
For the first assertion, if a left vertex $x\ne i$ belonged to both $N_G(j)$ and $N_G(j')$, then $j$ and $j'$ would share both $i$ and $x$, contradicting $C_4$-freeness.

For the second assertion, if $j$ were adjacent to two distinct vertices of $I_\star$, then $j$ and $j_\star$ would have two common left neighbors. This again contradicts $C_4$-freeness.

The third assertion is immediate because deleting right vertices cannot create a new pair of right vertices with two common left neighbors.

\end{proof}

\begin{lemma}[$C_4$-free graphs have sparse overlap]
\label{lem:c4-sparse-overlap}

Let $G=(I,J,M)$ be $C_4$-free with $|I|=k$, $|J|=n\ge2$, and every right degree between $2$ and $d$. Then
\[
\eta(G)\le 2d\,\frac{k}{n}.
\]

Consequently, if $d$ is fixed and $G^{(k)}$ is a sequence of such graphs with $n_k/k\to\infty$, then $\eta(G^{(k)})=o(1)$.

\end{lemma}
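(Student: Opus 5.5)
We bound $\eta(G)$ by counting ordered pairs of distinct right vertices that share a left neighbor. For each left vertex $i\in I$, let $D_i:=|N_G(i)|$ be its degree. An ordered pair $(j,j')$ with $j\ne j'$ and $N_G(j)\cap N_G(j')\neq\emptyset$ has, by $C_4$-freeness, exactly one common left neighbor $i$. Assigning the pair to that $i$ gives
\[
\#\{(j,j'):j\ne j',\ N_G(j)\cap N_G(j')\ne\emptyset\}=\sum_{i\in I}D_i(D_i-1).
\]
In fact only an upper bound is needed here, so uniqueness of $i$ is not even required. Hence
\[
\eta(G)=\frac{1}{n(n-1)}\sum_{i\in I}D_i(D_i-1).
\]

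Next, we control $\sum_i D_i^2$ using $C_4$-freeness. Fix $i$. The $D_i$ right neighbors of $i$ have, by Lemma~\ref{lem:c4-basic}(1), pairwise disjoint outer neighborhoods $N_G(j)\setminus\{i\}$. Each of these is nonempty, because every right degree is at least $2$, and each lies in $I\setminus\{i\}$. Therefore $D_i\le k-1<k$.

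Combining this with $\sum_i D_i=|E|\le dn$ gives
\[
\sum_i D_i(D_i-1)\le \max_i D_i\cdot\sum_i D_i\le k\cdot dn.
\]
It follows that $\eta(G)\le \frac{dkn}{n(n-1)}=\frac{dk}{n-1}\le 2d\frac{k}{n}$, using $n-1\ge n/2$ for $n\ge2$.

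The consequence is then immediate: if $d$ is fixed and $n_k/k\to\infty$, the bound $2dk/n_k$ tends to $0$, so $\eta(G^{(k)})=o(1)$.

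The only nontrivial step is the degree bound $D_i<k$. This is where both hypotheses are genuinely used: $C_4$-freeness supplies the disjointness of the outer neighborhoods, and the minimum right degree $2$ guarantees each outer neighborhood is nonempty. Without that lower bound, one left vertex could be adjacent to all $n$ right vertices, and $\eta$ would not be small.
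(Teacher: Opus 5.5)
Your proof is correct and follows essentially the same route as the paper: you bound the number of overlapping ordered pairs by $\sum_i D_i(D_i-1)$, use $C_4$-freeness together with minimum right degree $2$ to get $D_i\le k-1$, and combine this with $\sum_i D_i\le dn$ and $n-1\ge n/2$. Your remark that $C_4$-freeness makes the pair count an exact equality is also correct, though, as you note, only the upper bound is needed.
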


\begin{proof}
For each left vertex $i\in I$, let $D_i:=|N_G(i)|$. An ordered pair $(j_1,j_2)$ with $j_1\ne j_2$ shares a left neighbor only if both vertices belong to $N_G(i)$ for some $i$. Hence the number of such ordered pairs is at most
\[
\sum_{i\in I}D_i(D_i-1).
\]

We claim $D_i\le k-1$ for every $i$. Indeed, by the first basic consequence of $C_4$-freeness, the sets $N_G(j)\setminus\{i\}$, $j\in N_G(i)$, are pairwise disjoint. Since every right vertex has degree at least $2$, each of these sets is nonempty, so $D_i\le k-1$.

Since every right vertex has degree at most $d$, $\sum_iD_i\le dn$. Therefore
\[
\sum_iD_i(D_i-1)\le \sum_iD_i^2\le (k-1)dn.
\]
Dividing by the total number $n(n-1)$ of distinct ordered pairs gives
\[
\eta(G)\le \frac{d(k-1)}{n-1}\le 2d\,\frac{k}{n}.
\]

\end{proof}

\subsection{Three Finite Ingredients}

The proof of the sampling theorem uses three finite ingredients: a reservoir-pruning lemma, a root-selection lemma, and a block-capture lemma.

The first ingredient is the local $C_4$-free pruning step. Once every
neighbor of the root has a large \reservoir{}, it extracts the disjoint outer
neighborhoods required in the matching rooted tree.  

\begin{lemma}[Reservoirs produce an $m$-matching rooted tree]
\label{lem:reservoirs-produce-tree}

Fix integers $d\ge1$ and $m\ge1$, and set
\[
\beta_d:=1+(d-1)^2.
\]
Let $G=(I,J,M)$ be $C_4$-free and suppose every right vertex has degree at most $d$. Fix $j_\star\in J$, set $I_\star=N_G(j_\star)$, and assume that for every $i\in I_\star$ there is a \reservoir{}
\[
\mathcal R_i\subseteq N_G(i)\setminus\{j_\star\}
\]
with
\[
|\mathcal R_i|\ge m\beta_d.
\]
Then $G$ contains an \mrt{m} at $j_\star$ with $\mathcal K_i\subseteq\mathcal R_i$ for every $i\in I_\star$.

\end{lemma}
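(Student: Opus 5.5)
The plan is a greedy selection argument, processed one reservoir vertex at a time. First I check which parts of Definition~\ref{def:rooted-matching-tree} hold automatically. Any $j\in\mathcal R_i$ satisfies $j\ne j_\star$ and $i\in N_G(j)\cap I_\star$. So by Lemma~\ref{lem:c4-basic}(2), $N_G(j)\cap I_\star=\{i\}$. For the same reason the reservoirs $\mathcal R_i$, $i\in I_\star$, are pairwise disjoint, and hence so are any subsets $\mathcal K_i\subseteq\mathcal R_i$. The one remaining requirement is that the outer neighborhoods $O(j):=N_G(j)\setminus I_\star=N_G(j)\setminus\{i\}$ be pairwise disjoint over all selected $j\in\bigcup_i\mathcal K_i$.

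Next I bound a conflict graph. Put $U:=\bigcup_{i\in I_\star}\mathcal R_i$, and call $j,j'\in U$ in conflict if $O(j)\cap O(j')\neq\emptyset$. Take $j\in\mathcal R_i$ and count its conflicts.

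Inside its own reservoir, $j$ has no conflicts. Indeed, $j,j'\in\mathcal R_i\subseteq N_G(i)$ are distinct neighbors of $i$, so Lemma~\ref{lem:c4-basic}(1) shows $N_G(j)\setminus\{i\}$ and $N_G(j')\setminus\{i\}$ are disjoint.

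Across reservoirs, take $j'\in\mathcal R_{i'}$ with $i'\ne i$ in conflict with $j$. Then $j'$ contains some $x\in O(j)$, and $|O(j)|\le d-1$. Fix such an $x$. The vertices $j'\in U\setminus\mathcal R_i$ adjacent to $x$ lie in reservoirs at distinct $i'$: two of them in the same $\mathcal R_{i'}$ would share both $i'$ and $x$, contradicting $C_4$-freeness. Also $i'$ ranges over $I_\star\setminus\{i\}$, which has at most $d-1$ elements. So $x$ contributes at most $d-1$ conflicting vertices, and the total number of conflicts of $j$ is at most $(d-1)^2$.

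Now I run the greedy selection. Process $I_\star$ in any order. While some $\mathcal K_i$ has fewer than $m$ elements, pick an unselected $j\in\mathcal R_i$ that conflicts with no already selected vertex, add it to $\mathcal K_i$, and mark its conflict neighbours as forbidden.

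The counting step that makes this work is the following. Conflicts only occur across reservoirs. Each selected vertex forbids at most $(d-1)^2$ vertices in total, but it could forbid all of them inside a single $\mathcal R_i$, so a naive bound gives up to $(d-1)^2\cdot m(d-1)$ forbidden vertices there. That is too many, so I reverse the count: bound how many selected vertices a candidate $j\in\mathcal R_i$ conflicts with, rather than how many candidates a selected vertex kills.

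A cleaner route avoids this issue. Select $\mathcal K_i$ simultaneously via a sequential round-robin, or equivalently a Hall/greedy argument on the conflict graph, whose maximum degree is at most $(d-1)^2$. Order $U$ arbitrarily and choose greedily until each $\mathcal K_i$ reaches size $m$. When $\mathcal K_i$ currently has $t<m$ elements, the vertices of $\mathcal R_i$ already excluded are the $t$ chosen ones plus those in conflict with selected vertices elsewhere. To control the latter I use the reverse count: each of the at most $dm$ total selected vertices excludes at most $(d-1)^2$ vertices overall. To make that fit within $\beta_d m$, I select in rounds, adding one vertex to each $\mathcal K_i$ per round. Before round $t+1$, at most $t\,|I_\star|$ vertices are selected. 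Ideally I would charge the exclusions in $\mathcal R_i$ so that each round costs $\mathcal R_i$ at most $1+(d-1)^2=\beta_d$ vertices, giving $t\beta_d<m\beta_d\le|\mathcal R_i|$ and leaving an available choice.

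The main obstacle is this per-reservoir accounting, since a single selected vertex might kill up to $(d-1)^2$ vertices of one reservoir. If the round-robin charging fails, my fallback is to check it via the per-outer-vertex bound. A selected $j'\in\mathcal R_{i'}$ excludes a vertex of $\mathcal R_i$ only through some $x\in O(j')$, and each $x$ meets $\mathcal R_i$ in at most one vertex (by $C_4$-freeness with $i$). So $j'$ excludes at most $d-1$ vertices of each $\mathcal R_i$. Per round, the at most $d-1$ vertices selected in the other reservoirs then exclude at most $(d-1)^2$ vertices of $\mathcal R_i$, and together with $\mathcal K_i$'s own pick this costs at most $\beta_d$ per round, which is exactly what is needed.
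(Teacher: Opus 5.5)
Your final (fallback) argument is correct and rests on the same key fact as the paper's proof: a selected $j'\in\mathcal R_{i'}$ excludes at most $d-1$ vertices of any other reservoir $\mathcal R_i$, because each $x\in O(j')$ is adjacent to at most one vertex of $\mathcal R_i$; the paper simply fills the reservoirs one after another rather than in rounds. That same fact also shows your earlier worry, that one selected vertex could kill $(d-1)^2$ vertices of a single reservoir, is unfounded. So the plain sequential greedy already closes with at most $(d-1)m\cdot(d-1)=m(d-1)^2$ exclusions, and the round-robin detour is unnecessary (its count before $\mathcal R_i$'s pick in round $t+1$ is really $t+(t+1)(d-1)^2\le m\beta_d-1$ rather than $t\beta_d$, which still suffices).
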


\begin{figure}[H]
\centering
\begin{tikzpicture}[
  scale=0.82,
  transform shape,
  x=1cm,
  y=1cm,
  rightv/.style={circle, draw=blue!65, fill=blue!10, thick, minimum size=6.5mm, inner sep=0pt},
  chosen/.style={circle, draw=blue!80!black, fill=blue!22, very thick, minimum size=4.8mm, inner sep=0pt},
  unused/.style={circle, draw=gray!60, fill=gray!10, thick, minimum size=3.1mm, inner sep=0pt},
  leftv/.style={circle, draw=teal!70!black, fill=teal!10, thick, minimum size=6mm, inner sep=0pt},
  outer/.style={circle, draw=gray!70, fill=gray!12, thick, minimum size=3.2mm, inner sep=0pt},
  collision/.style={circle, draw=gray!70!black, fill=gray!12, thick, minimum size=3.2mm, inner sep=0pt},
  edge/.style={draw=gray!70, line width=0.55pt},
  goodedge/.style={draw=blue!55, line width=0.75pt},
  badedge/.style={draw=red!70!black, dashed, line width=0.75pt},
  fadededge/.style={draw=gray!45, line width=0.35pt},
  box/.style={draw=gray!55, dashed, rounded corners=2pt, fill=gray!4},
  label/.style={font=\scriptsize}
]

\node[rightv] (root) at (0,0) {$j_\star$};
\node[leftv] (i1) at (-5.4,-1.1) {$i_1$};
\node[leftv] (i2) at (0,-1.1) {$i_2$};
\node[leftv] (i3) at (5.4,-1.1) {$i_3$};

\foreach \i in {1,2,3} {\draw[edge] (root) -- (i\i);}

\foreach \base/\idx/\inode in {-5.4/1/i1,0/2/i2,5.4/3/i3} {
  \draw[box] ({\base-1.7},-4.05) rectangle ({\base+1.7},-1.75);
  \node[label, anchor=south west] at ({\base-1.65},-1.72) {$\mathcal R_{i_\idx}$};

  \foreach \row in {1,2,3,4} {
    \foreach \col in {1,2,3,4,5} {
      \node[unused] (u\idx-\row-\col) at ({\base-1.2+0.6*(\col-1)},{-2.0-0.34*(\row-1)}) {};
      \draw[fadededge] (\inode) -- (u\idx-\row-\col);
    }
  }

  \foreach \col in {1,2,3,4,5} {
    \node[chosen] (a\idx\col) at ({\base-1.2+0.6*(\col-1)},-3.55) {};
    \draw[goodedge] (\inode) -- (a\idx\col);
  }
}

\foreach \aux/\x in {
  a11/-6.6,a12/-6.0,a13/-5.4,a14/-4.8,a15/-4.2,
  a21/-1.2,a22/-0.6,a23/0,a24/0.6,a25/1.2,
  a31/4.2,a32/4.8,a33/5.4,a34/6.0,a35/6.6
} {
  \node[outer] (\aux-o1) at ({\x-0.17},-4.65) {};
  \node[outer] (\aux-o2) at ({\x+0.17},-4.65) {};
  \draw[goodedge] (\aux) -- (\aux-o1);
  \draw[goodedge] (\aux) -- (\aux-o2);
}

\node[collision] (c1) at (-4.03,-4.65) {};
\draw[badedge] (u2-3-1) -- (c1);
\node[collision] (c2) at (1.38,-4.65) {};
\draw[badedge] (u3-3-1) -- (c2);
\node[label, red!70!black] at (0,-5.08) {discarded candidates may collide with previously exposed outer neighborhoods};

\node[label, above] at (0,0.35) {root};
\node[label, left] at (-7.45,-2.9) {gray reservoir};
\node[label, left] at (-7.45,-3.55) {blue selected $\mathcal K_i$};
\node[label, below] at (0,-5.45) {selected vertices have disjoint outer left neighborhoods, giving the same $d=3,m=5$ tree shape as in Figure~\ref{fig:rooted-matching-tree}};

\end{tikzpicture}
\caption{Reservoir pruning around a root for $d=3$ and $m=5$. Each left neighbor of the root has a reservoir of size $m(1+(d-1)^2)=25$. The pruning lemma selects five blue vertices from each reservoir and avoids candidates whose outer left neighborhoods collide with previously selected vertices.}
\label{fig:reservoir-pruning}
\end{figure}

\begin{proof}
Because $G$ is $C_4$-free, every $j\in\mathcal R_i$ satisfies
\[
N_G(j)\cap I_\star=\{i\}.
\]
Otherwise $j$ and $j_\star$ would have two common left neighbors. In particular, the \reservoir{}s $\mathcal R_i$ are pairwise disjoint as $i$ varies.

For $j\in\bigcup_{i\in I_\star}\mathcal R_i$, call $N_G(j)\setminus I_\star$ its outer left neighborhood. Enumerate $I_\star=\{i_1,\dots,i_q\}$, where $q\le d$, and choose $\mathcal K_{i_s}\subseteq\mathcal R_{i_s}$ greedily.

Suppose $\mathcal K_{i_1},\dots,\mathcal K_{i_{s-1}}$ have already been chosen. Fix a previously chosen vertex $j'$. Its outer left neighborhood has size at most $d-1$. For each $x\in N_G(j')\setminus I_\star$, there is at most one vertex $j\in\mathcal R_{i_s}$ with $x\in N_G(j)$; two such vertices would share both $i_s$ and $x$, creating a $4$-cycle.

Thus each previously chosen vertex rules out at most $d-1$ candidates from $\mathcal R_{i_s}$. Since there are at most $(s-1)m\le(d-1)m$ previously chosen vertices, at most $m(d-1)^2$ candidates are ruled out. The \reservoir{} size assumption
\[
|\mathcal R_{i_s}|\ge m\beta_d=m+m(d-1)^2
\]
leaves at least $m$ available candidates; take these as $\mathcal K_{i_s}$.
The outer left neighborhoods of distinct vertices inside the same $\mathcal R_{i_s}$ are automatically disjoint: otherwise two such right vertices would share both $i_s$ and an outer left vertex, contradicting $C_4$-freeness.

After all \reservoir{}s are processed, the chosen subfamilies have size $m$ and have pairwise disjoint outer left neighborhoods. Hence they form an \mrt{m} at $j_\star$.
See Figure \ref{fig:reservoir-pruning} illustrating the reservoir pruning procedure.
\end{proof}

\begin{definition}[Heavy-neighbor sets and light roots]
\label{def:heavy-neighbor-light-root}
For a $G=(I,J,M)$ with maximum right degree $d$. 
For $i\in I$, write
\[
D_i:=|N_G(i)|.
\]
Let
\(
N^\dagger_G(i)\subseteq N_G(i)
\)
be any set of cardinality $\lceil \tfrac{1}{32d} D_i\rceil$, 
such that
\[
|M_{ij}|\ge |M_{ij'}|
\qquad
\forall j\in N^\dagger_G(i),\ 
\forall j'\in N_G(i)\setminus N^\dagger_G(i).
\]
A right vertex $j_\star\in J$ is \light{} if
\[
j_\star\notin N^\dagger_G(i)
\qquad
\forall i\in N_G(j_\star).
\]
\end{definition}
\begin{remark}[Graph combinatorics versus matrix weights]
The choice of the sets $N^\dagger_G(i)$ is the only place where the proof uses
information beyond the combinatorics of the support graph. Indeed, after these
sets are fixed, the rest of the argument is purely graph-theoretic. In
particular, for $\{0,\pm1\}$ matrices this weighted choice is unnecessary:
all nonzero entries have the same magnitude, so any support-theoretic choice of
the corresponding auxiliary neighbor sets already gives the required
weight-compatibility.
\end{remark}
\begin{cor}[Heavy reservoirs produce a weight-compatible tree]
\label{cor:heavy-reservoirs-weight-compatible}
Fix integers $d\ge1$ and $m\ge1$, and set
\[
\beta_d:=1+(d-1)^2.
\]
Let $G=(I,J,M)$ be $C_4$-free and suppose every right vertex has degree at
most $d$. Fix a \light{} right vertex $j_\star\in J$. Set
$I_\star=N_G(j_\star)$, and assume that for every $i\in I_\star$ there is a
\reservoir{}
\[
\mathcal R_i\subseteq N^\dagger_G(i)\setminus\{j_\star\}
\]
with
\[
|\mathcal R_i|\ge m\beta_d.
\]
Then $G$ contains a \wcmrt{m} at $j_\star$
with $\mathcal K_i\subseteq\mathcal R_i$ for every $i\in I_\star$.
\end{cor}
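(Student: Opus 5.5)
The plan is to apply Lemma~\ref{lem:reservoirs-produce-tree} directly to the given reservoirs and then check weight compatibility using lightness of the root. The hypotheses of the lemma hold: $G$ is $C_4$-free, right degrees are at most $d$, and each $\mathcal R_i\subseteq N^\dagger_G(i)\setminus\{j_\star\}\subseteq N_G(i)\setminus\{j_\star\}$ has size at least $m\beta_d$. The lemma therefore produces an \mrt{m} at $j_\star$, witnessed by sets $\mathcal K_i\subseteq\mathcal R_i$ for every $i\in I_\star$.

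It remains to verify the condition of Definition~\ref{def:weight-compatible-rooted-matching-tree}, namely $|M_{i,j}|\ge|M_{i,j_\star}|$ for all $i\in I_\star$ and $j\in\mathcal K_i$. Fix such $i$ and $j$. Since $j\in\mathcal K_i\subseteq\mathcal R_i\subseteq N^\dagger_G(i)$, the vertex $j$ lies in the heavy-neighbor set at $i$. Since $j_\star$ is \light{} and $i\in N_G(j_\star)$, we have $j_\star\notin N^\dagger_G(i)$. We also have $j_\star\in N_G(i)$, so $j_\star\in N_G(i)\setminus N^\dagger_G(i)$. The defining property of $N^\dagger_G(i)$ in Definition~\ref{def:heavy-neighbor-light-root} then gives $|M_{ij}|\ge|M_{ij_\star}|$, which is exactly the required inequality.

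No step is a genuine obstacle. The one point needing care is that the defining inequality of $N^\dagger_G(i)$ compares members of $N^\dagger_G(i)$ only against vertices of $N_G(i)$ outside it. So we must confirm that $j_\star$ is a neighbor of $i$, which holds because $i\in I_\star=N_G(j_\star)$, and that lightness excludes $j_\star$ from $N^\dagger_G(i)$ precisely for these $i$. Both facts follow immediately from the definitions.
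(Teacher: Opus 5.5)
Your proposal is correct and follows the paper's proof: apply Lemma~\ref{lem:reservoirs-produce-tree} to obtain $\mathcal K_i\subseteq\mathcal R_i\subseteq N^\dagger_G(i)$, then use lightness of $j_\star$ to place it in $N_G(i)\setminus N^\dagger_G(i)$ and conclude $|M_{ij}|\ge|M_{i,j_\star}|$. Your explicit check that $j_\star\in N_G(i)$, so the defining comparison of $N^\dagger_G(i)$ actually applies to it, is a detail the paper leaves implicit.
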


\begin{proof}
By Lemma~\ref{lem:reservoirs-produce-tree}, the \reservoir{}s contain subfamilies
$\mathcal K_i\subseteq\mathcal R_i$, $i\in I_\star$, which form an
\mrt{m} at $j_\star$. For every $i\in I_\star$ and every
$j\in\mathcal K_i$, we have $j\in N^\dagger_G(i)$, while
$j_\star\notin N^\dagger_G(i)$ because $j_\star$ is \light{}. Therefore the
defining property of $N^\dagger_G(i)$ gives
\[
|M_{ij}|\ge |M_{i,j_\star}|.
\]
This is exactly the weight-compatibility condition.
\end{proof}

The second ingredient combines sparse right-pair overlap with the light-root
deletion. It produces many candidate right roots whose left neighborhoods are
disjoint, whose neighboring left vertices have large degree, and which are
\light{} relative to the heavy-neighbor sets.

\begin{lemma}[Sparse overlap yields many light candidate roots]
\label{lem:many-candidate-roots}

Let $G=(I,J,M)$ have $|I|=k$ and $|J|=n$ satisfying $ k/n \le 1/32$
, and suppose every right degree
is at most $d$. For $i\in I$, write $D_i:=|N_G(i)|$, and let
$\eta:=\eta(G)$ be the \overlapparameter{} of $G$. Let $K\ge1$ satisfy
\[
K(8\eta+2/n)\le1.
\]
Then there exist distinct right vertices $j_1,\dots,j_K\in J$ such that:

\begin{enumerate}
\item the neighborhoods $I_s:=N_G(j_s)$ are pairwise disjoint;
\item for every $s\in[K]$ and every $i\in I_s$,
\(
D_i\ge \frac{n}{8k}.
\)
\item every $j_s$ is \light{}.

\end{enumerate}

\end{lemma}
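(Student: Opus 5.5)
The plan is a counting-plus-greedy argument in three steps. First discard the right vertices that violate (2) or (3). Then show that the remaining set $J_g$ is still a constant fraction of $J$. Finally, extract an independent set of size $K$ in the ``overlap graph'' on $J_g$ using the smallness of $\eta$.

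\emph{Step 1: removing vertices that violate (2).} Let $I_{\rm low}:=\{i\in I: D_i<n/(8k)\}$. A right vertex violates (2) only if it is adjacent to some $i\in I_{\rm low}$. The number of such right vertices is at most
\[
\sum_{i\in I_{\rm low}}D_i<k\cdot\frac{n}{8k}=\frac n8 .
\]

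\emph{Step 2: removing vertices that violate (3).} A right vertex fails to be \light{} only if it lies in $N^\dagger_G(i)$ for some $i$. Since $|N^\dagger_G(i)|=\lceil D_i/(32d)\rceil\le D_i/(32d)+1$ and $\sum_i D_i\le dn$, the number of non-light vertices is at most
\[
\frac{dn}{32d}+k\le\frac n{32}+\frac n{32}=\frac n{16},
\]
where we used the hypothesis $k/n\le1/32$. Hence the set $J_g$ of right vertices satisfying both (2) and (3) has $|J_g|\ge n-\frac n8-\frac n{16}\ge \frac n2$. Vertices with empty neighborhood satisfy (2) and (3) vacuously and cause no trouble.

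\emph{Step 3: disjoint neighborhoods.} Form the overlap graph $H$ on $J_g$, joining $j\ne j'$ when $N_G(j)\cap N_G(j')\neq\emptyset$. A set of right vertices with pairwise disjoint neighborhoods is exactly an independent set in $H$. By definition of $\eta$, the number of ordered overlapping pairs in all of $J$ is $\eta n(n-1)$, so
\[
2|E(H)|\le\eta n(n-1)\le \eta n^2 .
\]
The Caro--Wei bound, or equivalently the greedy ``pick a minimum-degree vertex and delete its neighbors'' argument combined with Cauchy--Schwarz, gives an independent set of size at least
\[
\frac{|J_g|^2}{|J_g|+2|E(H)|}\ \ge\ \frac{(n/2)^2}{n/2+\eta n^2}=\frac{1}{2/n+4\eta}.
\]
Here we used that $x\mapsto x^2/(x+c)$ is increasing, so $|J_g|\ge n/2$ may be substituted. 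Since $2/n+4\eta\le 2/n+8\eta$, the hypothesis $K(8\eta+2/n)\le1$ gives an independent set of size at least $K$. Any $K$ of its elements serve as $j_1,\dots,j_K$, and they satisfy (1)--(3).

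The main point needing care is Step 2. That is where the hypothesis $k/n\le1/32$ enters, absorbing the ceiling's $+1$ per left vertex, and where the factor $\frac1{32d}$ in Definition~\ref{def:heavy-neighbor-light-root} is used against the right-degree bound $\sum_iD_i\le dn$. The remaining steps are routine; the slack between $4\eta$ and $8\eta$ leaves room even if one uses a cruder bound for $|J_g|$.
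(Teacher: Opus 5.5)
Your proof is correct. The first two steps match the paper's exactly:
\begin{itemize}
\item at most $n/8$ right vertices touch a left vertex of degree below $n/(8k)$;
\item $\sum_i\lceil D_i/(32d)\rceil\le n/32+k\le n/16$ bounds the non-light vertices, with $k/n\le 1/32$ absorbing the ceilings.
\end{itemize}

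The difference is in how you extract $K$ vertices with pairwise disjoint neighborhoods. The paper introduces a third bad set. Writing $h(j)$ for the number of overlap partners of $j$, the average of $h$ is $\eta(n-1)$, so by Markov at most $n/4$ vertices have $h(j)>4\eta n$. After discarding these, at least $9n/16$ vertices remain. A plain greedy then succeeds, because each chosen root blocks at most $4\eta n+1$ others and the hypothesis is equivalent to $K(4\eta n+1)\le n/2$.

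You instead apply Caro--Wei (in the Tur\'an form $N^2/(N+2|E|)$) to the overlap graph on $J_g$. This absorbs the uneven overlap degrees with no truncation, and it only requires $K(4\eta+2/n)\le 1$, which is weaker than the stated hypothesis.

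Your route is therefore a bit shorter and a factor of $2$ sharper in the $\eta$ term, at the cost of citing a named bound. The paper's Markov-plus-greedy argument is fully self-contained and amounts to a weak proof of the same Tur\'an-type estimate.
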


\begin{proof}
We will define three undesirable sets of right vertices. 

First, let
\[
I_{\rm bad}:=\{i\in I:D_i<n/(8k)\}.
\]
Then
\[
\sum_{i\in I_{\rm bad}}D_i\le k\cdot \frac{n}{8k}=\frac n8.
\]
Hence at most $n/8$ right vertices touch $I_{\rm bad}$. Let $\mathcal V$ be the set of right vertices $j$ such that every $i\in N_G(j)$ satisfies $D_i\ge n/(8k)$. Then
\(
|\mathcal V|\ge \frac78 n.
\)

For a right vertex $j$, let
\[
h(j):=
\bigl|\{j'\in J\setminus\{j\}:N_G(j')\cap N_G(j)\ne\emptyset\}\bigr|.
\]
Since $\eta$ is the overlap probability for an ordered pair of distinct right vertices,
\[
\frac1n\sum_{j\in J}h(j)=\eta(n-1)\le \eta n.
\]
Let
\[
\mathcal L:=\{j\in J:h(j)\le 4\eta n\}.
\]
If $\eta=0$, then $\mathcal L=J$. If $\eta>0$, Markov's inequality gives $|J\setminus\mathcal L|\le n/4$.

Let
\[
\mathcal H
:=
\{j\in J:\ j\text{ is not \light{}}\}.
\]
Then every vertex in $\mathcal H$ belongs to $N^\dagger_G(i)$ for at least
one $i\in I$. Therefore
\[
|\mathcal H|
\le
\sum_{i\in I}|N^\dagger_G(i)|
=
\sum_{i\in I}\left\lceil\frac{D_i}{32d}\right\rceil
\le
\frac1{32d}\sum_{i\in I}D_i+k
\le
\left(\frac1{32}+\frac{k}{n}\right)n
\le
\frac n{16}.
\]
Set
\[
\mathcal U:=\mathcal V\cap\mathcal L\cap(J\setminus\mathcal H).
\]
Our assumptions imply
\[
|\mathcal U|\ge \frac9{16}n.
\]

Choose roots greedily from $\mathcal U$. Suppose $j_1,\dots,j_{s-1}$ have already been chosen. Each previous root $j_t$ forbids itself and at most $h(j_t)\le4\eta n$ right vertices whose neighborhoods meet $N_G(j_t)$. Thus the number of forbidden vertices is at most
\[
(s-1)(4\eta n+1)\le K(4\eta n+1).
\]
The hypothesis gives
\[
K(4\eta n+1)\le \frac n2.
\]
Since $|\mathcal U|\ge9n/16$, some vertex $j_s\in\mathcal U$ remains available. By construction, $j_s$ is distinct from the earlier roots and $N_G(j_s)$ is disjoint from every earlier root neighborhood. Repeating this $K$ times gives $j_1,\dots,j_K$ with pairwise disjoint neighborhoods.

Finally, every chosen root lies in $\mathcal V$, so every left neighbor of every chosen root has degree at least $n/(8k)$.
Every chosen root also lies outside $\mathcal H$, so it is \light{}.

\end{proof}

The third ingredient is purely probabilistic: among many disjoint witness
blocks, a uniform right-vertex sample captures one complete block with high
probability.

\begin{lemma}[Capturing one of many disjoint witness blocks]
\label{lem:capture-disjoint-blocks}

Fix integers $B\ge1$ and $K\ge1$, and real $\lambda>2$. Let $r$ be a positive integer and $n_0=\lceil\lambda r\rceil$. Assume
\[
(8\lambda)^{2B}\le K,
\qquad
K\log K\le \frac r2.
\]
Let $J$ be a set of size $n_0$ and assume there are pairwise disjoint subsets
\[
\mathcal W_1,\dots,\mathcal W_K\subseteq J,
\]
each of size at most $B$. If $\Omega\subseteq J$ is chosen uniformly from $\binom Jr$, then
\[
\mathbb P\big(\forall s\in[K]:\mathcal W_s\not\subseteq\Omega\big)
\le
\exp(-\sqrt K).
\]

The condition $K\log K\le r/2$ keeps the sequential exposure of the $K$ blocks from depleting the sample. The condition $(8\lambda)^{2B}\le K$ ensures that the capture probability of one block, repeated $K$ times, gives exponent at least $\sqrt K$.

\end{lemma}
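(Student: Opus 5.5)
We expose the $K$ blocks one at a time and show that, at each stage, the current block is captured with conditional probability at least $(8\lambda)^{-B}$ as long as not too much of the sample has been revealed. The failure probability is then at most $(1-(8\lambda)^{-B})^K$, and the hypothesis $(8\lambda)^{2B}\le K$ turns this into $\exp(-\sqrt K)$.

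\emph{Reduction to an independent model.} The uniform $r$-subset model has mild dependence between blocks. One way around it is to realize $\Omega$ as the first $r$ distinct elements of an i.i.d.\ uniform sequence in $J$. A cleaner route is to work directly with hypergeometric conditional probabilities. Condition on the event that $\mathcal W_1,\dots,\mathcal W_{s-1}\not\subseteq\Omega$. Rather than conditioning on this event itself, we reveal the whole intersection $\Omega\cap(\mathcal W_1\cup\dots\cup\mathcal W_{s-1})$; call its size $t$. Then $t\le (s-1)B\le KB$. Given the revealed data, $\Omega$ restricted to the complement $J'$ of the revealed blocks is a uniform $(r-t)$-subset of $J'$, where $|J'|\le n_0$. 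Hence
\[
\mathbb P(\mathcal W_s\subseteq\Omega\mid\text{revealed})
=\frac{\binom{|J'|-b}{r-t-b}}{\binom{|J'|}{r-t}}
\ge\Bigl(\frac{r-t-B}{n_0}\Bigr)^{B},
\qquad b:=|\mathcal W_s|\le B .
\]

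\emph{Controlling the depletion.} We need $t+B\le r/2$, so that the bound above is at least $(2n_0/r)^{-B}\ge(2(\lambda+1))^{-B}\ge(8\lambda)^{-B}$, using $n_0\le\lambda r+1$ and $\lambda>2$. This is the main obstacle, because $KB$ is not directly bounded by the hypotheses. The hypothesis $(8\lambda)^{2B}\le K$ gives $2B\log(8\lambda)\le\log K$, so $B\le\log K/(2\log 16)<\log K$. Combined with $K\log K\le r/2$, this yields $(K+1)B\le r/2$ in the relevant range; the case $K=1$ is degenerate, since it forces $(8\lambda)^{2B}\le1$, which is impossible. Therefore $t+B\le KB\le r/2$ at every stage.

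\emph{Conclusion.} Multiplying the conditional failure probabilities $1-(8\lambda)^{-B}$ over the $K$ stages gives
\[
\mathbb P\bigl(\forall s\in[K]:\mathcal W_s\not\subseteq\Omega\bigr)\le\bigl(1-(8\lambda)^{-B}\bigr)^K\le\exp\bigl(-K(8\lambda)^{-B}\bigr)\le\exp(-\sqrt K),
\]
where the last step uses $(8\lambda)^{-B}\ge K^{-1/2}$.

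To make the conditioning rigorous, we use a martingale/tower argument over the revealed intersections. The uniform lower bound on the conditional capture probability holds for every possible revealed configuration, including those consistent with failure on the earlier blocks, so the product bound follows from the tower property.
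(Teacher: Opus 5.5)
Your proposal is correct and follows the paper's proof essentially step for step: you expose $\Omega\cap(\mathcal W_1\cup\cdots\cup\mathcal W_{s-1})$ sequentially, lower-bound the hypergeometric conditional capture probability using $B\le\log K$ and $KB\le K\log K\le r/2$, and conclude with $(1-(8\lambda)^{-B})^K\le\exp(-\sqrt K)$ via the tower property. The only cosmetic difference is that you bound the per-element ratio directly by $r/(2n_0)\ge 1/(2\lambda+2)$, whereas the paper uses $q_s-b_s\ge r/4$ and $N_s\le 2\lambda r$.
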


\begin{remark}
In this paper we repeatedly estimate the probability that a uniformly sampled
set captures a fixed subset. If $W\subseteq J$ has size $b$ and $\Omega$ is
chosen uniformly from $\binom Jr$, then
\[
\mathbb P(W\subseteq\Omega)
=
\frac{\binom{|J|-b}{r-b}}{\binom{|J|}{r}}
=
\frac{\ff{r}{b}}{\ff{|J|}{b}},
\]
where, for integers $a,q\ge0$, we write
\begin{equation}
\label{eq:falling-factorial}
(a)_q:=a(a-1)\cdots(a-q+1)
\end{equation}
for the falling factorial, with the convention $(a)_0=1$.
\end{remark}

\begin{proof}
For each $s\in[K]$, let $\Gamma_s$ be the event $\mathcal W_s\subseteq\Omega$, and put $b_s:=|\mathcal W_s|\le B$.

We first record the scale bounds used below. From $(8\lambda)^{2B}\le K$,
\[
2B\log(8\lambda)\le \log K.
\]
Since $\lambda>2$, we have $\log(8\lambda)>1$, and hence
\[
B\le \log K.
\]
Also
\[
K\ge (8\lambda)^{2B}\ge 16^2>2.
\]
Hence
\[
KB\le K\log K\le \frac r2.
\]
Also, since $K\log K\le r/2$ and $\log K\ge1$, we have $K\le r/2$.

Reveal $\Omega$ block by block in the order $\mathcal W_1,\dots,\mathcal W_K$. After the first $s-1$ blocks have been examined, let $a_{s-1}$ be the number of sampled elements already found in $\mathcal W_1\cup\cdots\cup\mathcal W_{s-1}$. Conditional on the past, the restriction of $\Omega$ to the complement of these blocks is a uniform subset of size
\[
q_s:=r-a_{s-1}
\]
inside a ground set of size
\[
N_s:=n_0-|\mathcal W_1\cup\cdots\cup\mathcal W_{s-1}|.
\]
The scale bounds give
\[
q_s\ge r-KB\ge \frac r2
\]
and
\[
N_s\le n_0\le2\lambda r.
\]
Moreover,
\[
q_s-b_s
\ge
r-KB-B
\ge
\frac r2-\log K
\ge
\frac r4,
\]
because $K\le r/2$ implies $\log K\le r/4$ for $K\ge2$.

Therefore, conditional on the past,
\[
\mathbb P(\Gamma_s\mid\text{past})
=
\frac{\binom{N_s-b_s}{q_s-b_s}}{\binom{N_s}{q_s}}
=
\frac{q_s!\,(N_s-b_s)!}{(q_s-b_s)!\,N_s!}
=
\prod_{\ell=0}^{b_s-1}\frac{q_s-\ell}{N_s-\ell}
\ge
\left(\frac{q_s-b_s}{N_s}\right)^{b_s}
\ge
\left(\frac1{8\lambda}\right)^{b_s}
\ge
\left(\frac1{8\lambda}\right)^B.
\]

Set $p_\star:=(8\lambda)^{-B}$. Then, by sequential conditioning,
\[
\mathbb P\left(\bigcap_{s=1}^K\Gamma_s^c\right)
\le
(1-p_\star)^K
\le
\exp(-p_\star K).
\]

The hypothesis $(8\lambda)^{2B}\le K$ gives $p_\star K=K/(8\lambda)^B\ge\sqrt K$. Thus
\[
\mathbb P\left(\bigcap_{s=1}^K\Gamma_s^c\right)
\le
\exp(-\sqrt K).
\]

\end{proof}

\subsection{The Constant-Ratio \texorpdfstring{$C_4$}{C4}-free Case}

The next proposition combines the three ingredients when the ambient right side has size comparable to the final sample size.

\begin{prop}[Constant-ratio $C_4$-free sampling theorem]
\label{prop:c4-constant-ratio-subgraph}

Fix an integer $d\ge2$, a real $\varepsilon\in(0,1]$, and a number
$\eta\ge0$. Set
\[
\beta_d:=1+(d-1)^2.
\]
Let $k,r\ge1$ satisfy
\[
\varepsilon k\le r\le k.
\]
Let
\[
K:=
\max\left\{
\ell\in\mathbb N_{\ge1}:
\ell\log \ell\le \frac r2
\quad\text{and}\quad
\ell\le \frac1{16\eta}
\right\},
\]
where the second condition is omitted when $\eta=0$. Assume this set is nonempty.

Let $\lambda>2$, and define
\[
m:=\left\lfloor \frac{\varepsilon\lambda}{512d^2\beta_d}\right\rfloor,
\qquad
L:=m\beta_d,
\qquad
B:=1+dL.
\]
Assume
\[
\varepsilon\lambda\ge512d^2\beta_d,
\qquad
(8\lambda)^{2B}\le K.
\]
Let $n:=\lceil\lambda r\rceil$, and let $G=(I,J,M)$ be $C_4$-free with
\[
|I|=k,\qquad |J|=n,
\]
and right degrees satisfying
\[
2\le |N_G(j)|\le d
\qquad\forall j\in J.
\]
Assume
\[
\eta(G)\le\eta.
\]
If $\Omega$ is chosen uniformly from $\binom Jr$, then
\[
\mathbb P\left(
\text{$G[\Omega]$ contains a \wcmrt{m}}
\right)
\ge 1-\exp(-\sqrt K).
\]

\end{prop}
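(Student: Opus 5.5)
The plan is to combine the three finite ingredients in order. First I get $K$ light candidate roots from Lemma~\ref{lem:many-candidate-roots}. Then I build pairwise disjoint heavy witness blocks around them. Next I capture one block with Lemma~\ref{lem:capture-disjoint-blocks}. Finally I prune the captured block with Lemma~\ref{lem:reservoirs-produce-tree} inside $G[\Omega]$.

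\textbf{Step 1: roots.} I check the hypotheses of Lemma~\ref{lem:many-candidate-roots}. We have $n\ge\lambda r\ge\lambda\varepsilon k\ge 512d^2\beta_d k$, so $k/n\le 1/32$. Next, $K\le 1/(16\eta)$ gives $8\eta K\le 1/2$. Also $K\le K\log K\le r/2\le n/4$ (since $K\ge 2$), so $2K/n\le 1/2$. Hence $K(8\eta+2/n)\le 1$. The lemma then gives light roots $j_1,\dots,j_K$ with pairwise disjoint first layers $I_s$, and every $i\in I_s$ has $D_i\ge n/(8k)$.

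\textbf{Step 2: disjoint heavy reservoirs (main obstacle).} Inside one block the reservoirs are automatically disjoint by $C_4$-freeness. Across different blocks, however, a right vertex may lie in $N^\dagger_G(i)$ and $N^\dagger_G(i')$ for $i\in I_s$, $i'\in I_t$. I would handle this with a Hall-type multi-matching. Let $I^\ast=\bigcup_s I_s$; these left vertices are all distinct. Each $i\in I^\ast$ demands $L$ vertices from $N^\dagger_G(i)$.

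Since roots are light, no root lies in any $N^\dagger_G(i)$. So the supply of $i$ is
\[
|N^\dagger_G(i)|\ge \frac{D_i}{32d}\ge \frac{n}{256dk}\ge \frac{\lambda\varepsilon}{256d}\ge 2dm\beta_d\ge dL .
\]
For any $S\subseteq I^\ast$, every right vertex has at most $d$ left neighbors, so
\[
\Bigl|\bigcup_{i\in S}N^\dagger_G(i)\Bigr|\ge \frac1d\sum_{i\in S}|N^\dagger_G(i)|\ge |S|L .
\]
Hall's theorem, applied after replicating each $i$ into $L$ copies, gives pairwise disjoint sets $\mathcal R_i\subseteq N^\dagger_G(i)\setminus\{\text{roots}\}$ with $|\mathcal R_i|=L$. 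The witness blocks $\mathcal W_s=\{j_s\}\cup\bigcup_{i\in I_s}\mathcal R_i$ are therefore pairwise disjoint, and each has size at most $1+dL=B$.

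\textbf{Step 3: capture.} The set $J$ has size $n=\lceil\lambda r\rceil$. The assumptions $(8\lambda)^{2B}\le K$ and $K\log K\le r/2$ are exactly those of Lemma~\ref{lem:capture-disjoint-blocks}. So with probability at least $1-\exp(-\sqrt K)$ some $\mathcal W_s\subseteq\Omega$.

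\textbf{Step 4: pruning in the sample.} On this event I work in $G[\Omega]$. It is $C_4$-free by Lemma~\ref{lem:c4-basic}, and its right degrees are at most $d$. Moreover $N_{G[\Omega]}(j_s)=I_s$, and each $\mathcal R_i\subseteq N_{G[\Omega]}(i)\setminus\{j_s\}$ has size $m\beta_d$. Lemma~\ref{lem:reservoirs-produce-tree} then yields an \mrt{m} at $j_s$ with $\mathcal K_i\subseteq\mathcal R_i$.

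For weight compatibility I do not invoke Corollary~\ref{cor:heavy-reservoirs-weight-compatible} in $G[\Omega]$, because the heavy sets there may differ. Instead I use the sets $N^\dagger_G(i)$ of the original $G$. Each $j\in\mathcal K_i$ lies in $N^\dagger_G(i)$, while $j_s\notin N^\dagger_G(i)$ since $j_s$ is light. Hence $|M_{ij}|\ge|M_{ij_s}|$, which is the required weight-compatibility.
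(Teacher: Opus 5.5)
Your proposal is correct and follows the paper's proof essentially step for step: light roots from Lemma~\ref{lem:many-candidate-roots}, disjoint heavy reservoirs via the Hall-type multi-matching, block capture via Lemma~\ref{lem:capture-disjoint-blocks}, and pruning with weight-compatibility coming from lightness of the root. The only difference is cosmetic: you prune inside $G[\Omega]$ and check weight-compatibility directly against the original sets $N^\dagger_G(i)$, whereas the paper applies Corollary~\ref{cor:heavy-reservoirs-weight-compatible} in $G$ and then notes that all right vertices of the tree lie in $\Omega$. One small fix: $K\le K\log K$ needs $\log K\ge 1$, which follows from $(8\lambda)^{2B}\le K$ (giving $K\ge 256$), not from $K\ge 2$.
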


\begin{remark}[Scale of the admissibility condition]
Since
\[
B=1+d\beta_d m\asymp d^3m
\quad
\mbox{and}
\quad
m\asymp \frac{\varepsilon\lambda}{d^4},
\]
the exact condition
\[
(8\lambda)^{2B}\le K
\]
needed in Lemma~\ref{lem:capture-disjoint-blocks} should be read heuristically as
\[
\left(\frac{Cd^4m}{\varepsilon}\right)^{Cd^3m}\le K
\]
for a universal constant $C$. 
\end{remark}

\begin{proof}

\emph{Step 1: choose candidate roots.} For $i\in I$, write $D_i:=|N_G(i)|$. Lemma~\ref{lem:many-candidate-roots} requires
\[
K(8\eta(G)+2/n)\le1.
\]
The assumed overlap bound gives $8K\eta(G)\le8K\eta\le1/2$ when $\eta>0$, while $\eta(G)=0$ when $\eta=0$. Also $(8\lambda)^{2B}\le K$ implies $\log K\ge1$, so $K\log K\le r/2$ gives $K\le r/2$. Since $n=\lceil\lambda r\rceil$ and $\lambda>2$, we get $2K/n\le1/2$. Moreover,
\[
\frac{k}{n}
\le
\frac1{\varepsilon\lambda}
\le
\frac1{32}.
\]
Hence Lemma~\ref{lem:many-candidate-roots} applies.

We obtain $K$ distinct right vertices $j_1,\dots,j_K$ such that their neighborhoods
\[
I_s:=N_G(j_s)
\]
are pairwise disjoint and every $i\in I_s$ satisfies
\[
D_i\ge \frac n{8k}.
\]
Moreover, every $j_s$ is \light{}.

\emph{Step 2: build disjoint heavy reservoirs.} Let
\[
I':=\bigsqcup_{s=1}^K I_s,
\qquad
J':=J\setminus\{j_1,\dots,j_K\}.
\]
For $i\in I'$, let $s_i$ be the unique index with $i\in I_{s_i}$. Since
$j_{s_i}$ is \light{} and the root neighborhoods are pairwise disjoint,
\[
N^\dagger_G(i)\subseteq J'
\qquad
\forall i\in I'.
\]
Let $H$ be the bipartite graph on $(I',J')$ whose edges are
\[
i\sim_H j
\qquad\Longleftrightarrow\qquad
j\in N^\dagger_G(i).
\]
The right degrees in $H$ are at most $d$. For $i\in I'$, the left degree in
$H$ is $|N^\dagger_G(i)|=\left\lceil D_i/(32d)\right\rceil$, and hence
\[
\deg_H(i)\ge \frac{D_i}{32d}\ge \frac n{256dk}.
\]

The degree estimate shows that any vertex in $I'$ has a large neighborhood in the heavy-neighbor graph $H$, but these neighborhoods can overlap. 
We need to select, for each left vertex $i\in I'$, a subset
$\mathcal R_i$ of its neighbors in  $H$ of cardinality
$L$, with all $\mathcal R_i$ pairwise disjoint. This is a $b$-matching
problem: the left vertices have demand $L$, while the right vertices have capacity
$1$.
If $L$ were $1$, this would have been the standard perfect matching problem whose solvability is described by Hall's Matching  theorem.
The standard cloning reduction  reduces the problem to a
perfect matching by replacing each left vertex by $L$
copies with the same neighborhood. Application of Hall's Matching theorem to the cloned graph shows that  a required multi-matching exists if and only if
for every subset $T\subseteq I'$,
\[
|N_H(T)|\ge L|T|
\]
(see, for example, \cite{HalmosVaughan1950,Schrijver2003}). We verify this
Hall-type condition. For any $T\subseteq I'$, double-counting edges between
$T$ and $N_H(T)$ gives
\[
\frac n{256dk}|T|\le d\,|N_H(T)|.
\]
Therefore
\[
|N_H(T)|
\ge \frac{n}{256d^2k}|T|
\ge \frac{\varepsilon\lambda}{256d^2}|T|
\ge L|T|,
\]
where the second inequality uses $n\ge\lambda r\ge\varepsilon\lambda k$, and
the last follows from
\[
L=m\beta_d\le\frac{\varepsilon\lambda}{512d^2}.
\]
Thus Hall's condition holds, and the disjoint heavy \reservoir{}s exist.

For each $s\in[K]$, define the \witnessblock{}
\[
\mathcal W_s:=\{j_s\}\cup\bigcup_{i\in I_s}\mathcal R_i.
\]
The blocks are pairwise disjoint, and each has size at most
\[
1+dL=B.
\]

\emph{Step 3: capture a witness block.} The $K$ disjoint \witnessblock{}s of size at most $B$, together with the hypotheses $K\log K\le r/2$ and $(8\lambda)^{2B}\le K$, are exactly the input to Lemma~\ref{lem:capture-disjoint-blocks}. Hence
\[
\mathbb P\big(\exists s\in[K]:\mathcal W_s\subseteq\Omega\big)
\ge
1-\exp(-\sqrt K).
\]
On this event, fix $s$ with $\mathcal W_s\subseteq\Omega$. Inside $G[\Omega]$, each $i\in I_s$ has a \reservoir{}
\[
\mathcal R_i\subseteq N^\dagger_G(i)\setminus\{j_s\}
\]
of size $L=m\beta_d$. Since $j_s$ is \light{},
Corollary~\ref{cor:heavy-reservoirs-weight-compatible} gives a \wcmrt{m} at
$j_s$. All right vertices used by this tree lie in
$\Omega$, so the tree is contained in $G[\Omega]$.

\end{proof}

\subsection{The Finite \texorpdfstring{$C_4$}{C4}-free Case}

The previous proposition assumes that the ambient right side has size $\lceil\lambda r\rceil$ and that the overlap parameter is explicitly small. The next theorem removes these restrictions for an arbitrary large $C_4$-free right side. The proof first thins the right side to an intermediate set $J_0$ of size $\lceil\lambda_\star r\rceil$. Uniform thinning preserves the expected overlap parameter, and the bound on the \overlapparameter{} for $C_4$-free graphs makes the bad thinning event negligible.

\begin{lemma}[Uniform thinning preserves sparse overlap]
\label{lem:thinning-preserves-overlap}

Let $G=(I,J,M)$ be finite with $|J|=n\ge2$. Choose $J_0\subseteq J$ uniformly from $\binom J{n_0}$, where $2\le n_0\le n$, and let $G_0:=G[J_0]$. Then
\[
\mathbb E\,\eta(G_0)=\eta(G).
\]
Consequently, for every $\alpha>0$,
\[
\mathbb P\big(\eta(G_0)>\alpha\big)\le\frac{\eta(G)}{\alpha}.
\]

\end{lemma}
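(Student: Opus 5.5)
The plan is to write $\eta(G_0)$ as an average of pair indicators and use a symmetry (double-counting) argument, followed by Markov's inequality. Let $P\subseteq\{(j,j')\in J^2:j\ne j'\}$ be the set of ordered pairs of distinct right vertices with $N_G(j)\cap N_G(j')\ne\emptyset$. Then $\eta(G)=|P|/(n(n-1))$. The key observation is that neighborhoods in the right-induced subgraph $G_0=G[J_0]$ are unchanged: for $j\in J_0$ we have $N_{G_0}(j)=N_G(j)$, since $G[J_0]$ keeps all rows $I$ and the full columns indexed by $J_0$. Hence a pair $(j,j')$ of distinct elements of $J_0$ overlaps in $G_0$ exactly when $(j,j')\in P$. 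This gives the exact identity
\[
\eta(G_0)=\frac{1}{n_0(n_0-1)}\sum_{(j,j')\in P}\mathbf 1\{j\in J_0,\ j'\in J_0\},
\]
which is well defined because $n_0\ge2$.

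Next I take expectations. For a uniform $J_0\in\binom J{n_0}$ and any fixed pair of distinct elements, the remark on capture probabilities gives
\[
\mathbb P(\{j,j'\}\subseteq J_0)=\frac{(n_0)_2}{(n)_2}=\frac{n_0(n_0-1)}{n(n-1)}.
\]
By linearity,
\[
\mathbb E\,\eta(G_0)=\frac{1}{n_0(n_0-1)}\cdot|P|\cdot\frac{n_0(n_0-1)}{n(n-1)}=\frac{|P|}{n(n-1)}=\eta(G).
\]
Equivalently, a uniformly random ordered pair of distinct elements of a uniform $J_0$ is a uniformly random ordered pair of distinct elements of $J$.

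The tail bound follows from Markov's inequality, since $\eta(G_0)\ge0$:
\[
\mathbb P(\eta(G_0)>\alpha)\le\frac{\mathbb E\,\eta(G_0)}{\alpha}=\frac{\eta(G)}{\alpha}.
\]

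I expect no real obstacle here. The only point needing care is confirming that overlap in $G[J_0]$ coincides with overlap in $G$, which holds because restriction only removes right vertices and never edges incident to the surviving ones.
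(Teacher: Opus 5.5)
Your proposal is correct and takes the same approach as the paper. The paper argues that a uniform ordered pair drawn from a uniform $J_0$ is a uniform ordered pair of distinct elements of $J$, which is the equivalent form you note after your linearity computation with $(n_0)_2/(n)_2$, and then applies Markov's inequality; your explicit check that $N_{G_0}(j)=N_G(j)$ for $j\in J_0$ is a detail the paper leaves implicit.
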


\begin{proof}
Conditional on $J_0$, the overlap parameter $\eta(G_0)$ is the probability that a uniformly chosen ordered pair of distinct right vertices from $J_0$ overlaps. If we first choose $J_0$ uniformly and then choose such an ordered pair from $J_0$, the resulting ordered pair is uniform over all distinct ordered pairs in $J$. Therefore the expected overlap parameter is exactly $\eta(G)$. The tail bound is Markov's inequality.

\end{proof}

\begin{theor}[Finite $C_4$-free sampling theorem]
\label{thm:c4-finite-subgraph}

There exists a universal constant
$C_{\tref{thm:c4-finite-subgraph}}\ge1$ such that the following holds.
Fix an integer $d\ge2$, a real $\rho\in(0,1]$, and positive integers $k,r,n$ with  
 $$2 \le \rho k \le r\le k\le n.$$

Let $G=(I,J,M)$ be $C_4$-free with $|I|=k$, $|J|=n$, and right degrees satisfying
\[
2\le |N_G(j)|\le d
\qquad\forall j\in J.
\]

Define
\[
K_\star:=
\max\left\{
K\in\mathbb N_{\ge1}:
K\log K\le \Lambda 
\right\}, \quad \mbox{ where } \Lambda :=\min\left(\frac r2,\frac nk\right). 
\]
and let $m_\star$ be a positive integer satisfying
\[
\left(\frac{C_{\tref{thm:c4-finite-subgraph}}d^4m_\star}{\rho}\right)^{C_{\tref{thm:c4-finite-subgraph}}d^3m_\star}
\le K_\star.
\]
Assume such an $m_\star$ exists.
If $\Omega\subseteq J$ is chosen uniformly from $\binom Jr$, then
\[
\mathbb P\bigl(\text{$G[\Omega]$ contains a \wcmrt{m_\star}}\bigr)
\ge
1-\exp(-\sqrt{K_\star})-\frac{32d}{\log K_\star}.
\]

\end{theor}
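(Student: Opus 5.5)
The proof reduces to Proposition~\ref{prop:c4-constant-ratio-subgraph} by a two-stage realization of the uniform sample. First choose an intermediate set $J_0\subseteq J$ uniformly from $\binom{J}{n_0}$ with $n_0:=\lceil\lambda r\rceil$, and then choose $\Omega\subseteq J_0$ uniformly from $\binom{J_0}{r}$. The resulting $\Omega$ is uniform in $\binom Jr$. The parameter $\lambda$ is fixed as follows. Take the smallest $\lambda>2$ with $\lfloor \rho\lambda/(512d^2\beta_d)\rfloor = m_\star$, namely $\lambda:=512d^2\beta_d m_\star/\rho$ (it exceeds $2$ since $\beta_d\ge2$). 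Then set $\varepsilon:=\rho$. With these choices $\varepsilon\lambda\ge512 d^2\beta_d$ holds, the width $m$ of the proposition equals $m_\star$, and $B=1+d\beta_d m_\star\le 2d^3 m_\star$.

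We need $n_0\le n$. Since $\Lambda\le n/k$, we have $K_\star\log K_\star\le n/k$. The hypothesis on $m_\star$ forces $K_\star\ge (C d^4 m_\star/\rho)^{C}$, which is huge, so $n/k\ge K_\star \gg \lambda$ and hence $n\ge \lambda k\ge\lambda r$. If $n_0=n$, no thinning is needed.

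The graph $G_0:=G[J_0]$ is $C_4$-free by Lemma~\ref{lem:c4-basic}, and its right degrees still lie in $[2,d]$. By Lemma~\ref{lem:c4-sparse-overlap}, $\eta(G)\le 2dk/n\le 2d/\Lambda$. Set the threshold $\eta:=1/(16K_\star)$. Lemma~\ref{lem:thinning-preserves-overlap} then gives
\[
\mathbb P\bigl(\eta(G_0)>\eta\bigr)\le 16K_\star\eta(G)\le \frac{32dK_\star}{\Lambda}\le\frac{32d}{\log K_\star},
\]
where the last step uses $K_\star\log K_\star\le\Lambda$. This matches the second error term exactly, which confirms the choice of threshold.

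On the complementary event, apply Proposition~\ref{prop:c4-constant-ratio-subgraph} conditionally to $G_0$ with $\varepsilon=\rho$ and $\eta=1/(16K_\star)$. The integer $K$ defined there is at least $K_\star$: indeed $K_\star\log K_\star\le\Lambda\le r/2$ and $K_\star\le 1/(16\eta)$. The maximum in the proposition may be larger, but that only improves the bound, since every hypothesis involving $K$ is monotone. The other hypotheses hold: $\rho k\le r\le k$, and $(8\lambda)^{2B}\le K_\star\le K$ follows by choosing the universal constant $C_{\tref{thm:c4-finite-subgraph}}$ large enough that
\[
(8\lambda)^{2B}\le \left(\frac{4096 d^2\beta_d m_\star}{\rho}\right)^{4d^3m_\star}\le \left(\frac{C d^4 m_\star}{\rho}\right)^{C d^3 m_\star}.
\]
The proposition then yields a weight-compatible $m_\star$-matching rooted tree in $G_0[\Omega]=G[\Omega]$ with conditional probability at least $1-\exp(-\sqrt{K})\ge1-\exp(-\sqrt{K_\star})$. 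A union bound over the two failure events gives the stated probability.

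The main obstacle is the bookkeeping: we must check that the single constant $C$ simultaneously gives $(8\lambda)^{2B}\le K_\star$, $n_0\le n$, and $m=m_\star$ exactly, and we must handle the proposition's $K$ being defined as a maximum rather than as $K_\star$ itself. Both points come down to monotonicity and to choosing $\lambda$ proportional to $d^4m_\star/\rho$.
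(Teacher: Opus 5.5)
Your proposal is correct and follows the paper's proof essentially step for step. It uses the same two-stage sampling with $\lambda_\star=512d^2\beta_d m_\star/\rho$, and the same overlap threshold $1/(16K_\star)$, controlled through Lemma~\ref{lem:thinning-preserves-overlap} and Lemma~\ref{lem:c4-sparse-overlap}. It then applies Proposition~\ref{prop:c4-constant-ratio-subgraph} conditionally and finishes with a union bound.

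One small remark: with $\eta=1/(16K_\star)$, the cap $\ell\le 1/(16\eta)=K_\star$ together with $K_\star\log K_\star\le r/2$ forces the proposition's $K$ to equal $K_\star$ exactly, so your monotonicity hedge is unnecessary. Your check $n\ge K_\star k\ge\lambda k\ge\lambda r$ for $n_0\le n$ is, if anything, slightly cleaner than the paper's.
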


\begin{remark}
\label{rem:c4-finite-admissible-scale}
 The constant $C_{\tref{thm:c4-finite-subgraph}}$ is chosen only so that this simpler displayed condition implies the exact admissibility condition
\[
(8\lambda_\star)^{2B_\star}\le K_\star
\]
needed in Proposition~\ref{prop:c4-constant-ratio-subgraph}, for the $\lambda_\star$ and $B_\star$ defined in the proof below.
Thus $K_\star$ is the number of witness-block attempts available after accounting for both the final sample size $r$ and the ambient ratio $\theta=n/k$, while $m_\star$ is the largest tree width whose witness blocks can still be captured with these attempts.

After increasing $C_{\tref{thm:c4-finite-subgraph}}$ if necessary, the existence of $m_\star$ guarantees that both $K_\star$ and $\Lambda$ are greater than some large universal constants, so that taking logarithms twice is valid and remains positive.

Taking logarithms, the admissibility condition for an integer $m$ is equivalent to
\[
C_{\tref{thm:c4-finite-subgraph}}d^3m
\,
\log\left(\frac{C_{\tref{thm:c4-finite-subgraph}}d^4m}{\rho}\right)
\le \log K_\star.
\]
For $\Lambda$ large, the definition of $K_\star$ gives
\[
\log K_\star\asymp \log\Lambda.
\]
Set $x=C_{\tref{thm:c4-finite-subgraph}}d^3m$ and $y=\log K_\star$. The boundary equation associated with the logarithmic admissibility condition is
\[
y = x\log\left(x\frac d\rho\right).
\]
For $x$ greater than a sufficiently large universal constant, we have
\[
\log\left(\frac d\rho y\right)
=
\log\left(\frac d\rho x\log\left(x\frac d\rho\right)\right)
\asymp
\log\left(\frac d\rho x\right),
\]
which in turn implies that
\[
x \asymp \frac{y}{\log\left(\frac d\rho y\right)}.
\]
Thus, the largest admissible integer $m_\star$ satisfies
\[
m_\star
\asymp
\frac{\log\Lambda}{d^3\log(\tfrac{d}{\rho}\log\Lambda)}.
\]

\end{remark}

\begin{proof}

\emph{Step 1: choose the intermediate scale.}
We start with selecting the intermediate sample size $n_0= \lceil \lambda_* n \rceil$ which will allow us to apply Proposition~\ref{prop:c4-constant-ratio-subgraph} and checking that the assumptions on $K=K_\star$ appearing in this proposition are satisfied.
 Set
\[
\lambda_\star:=\frac{512d^2\beta_d m_\star}{\rho},
\qquad
B_\star:=1+d\beta_d m_\star
\qquad 
\theta:=\frac nk.
\]
By choosing the universal constant $C_{\tref{thm:c4-finite-subgraph}}$
sufficiently large, the defining inequality for $m_\star$, together with
$\beta_d\le d^2$, $\rho\le1$, and $m_\star\ge1$, implies
\[
(8\lambda_\star)^{2B_\star}\le K_\star,
\qquad
8\rho\lambda_\star\le K_\star,
\qquad
\rho\lambda_\star\ge512d^2\beta_d.
\]
Indeed, $m_\star\ge1$ and $\rho\le1$ give
$K_\star\ge (C_{\tref{thm:c4-finite-subgraph}}d^4)^{C_{\tref{thm:c4-finite-subgraph}}d^3}$.
After increasing $C_{\tref{thm:c4-finite-subgraph}}$ if necessary, the
existence of $m_\star$ therefore implies $K_\star\ge3$ and
\[
\theta
\ge
K_\star\log K_\star
\ge
32d\beta_d.
\]
Since $K_\star\log K_\star\le \theta$, the second inequality gives
\[
\lambda_\star r=\rho\lambda_\star k
\le
\frac18K_\star k
\le
\frac18\theta k
=
\frac n8.
\]
With
\[
n_0:=\lceil\lambda_\star r\rceil
,
\]
the bound $\lambda_\star r\le n/8$ gives
\[
n_0
\le
\lambda_\star r+1
\le
\frac n8+1
\le n,
\]
where the last inequality follows from $n \ge 2$. 

\emph{Step 2: control overlap after thinning.} Choose $J_0\subseteq J$ uniformly from $\binom J{n_0}$ and set $G_0:=G[J_0]$. Then $G_0$ is $C_4$-free and its right degrees still lie between $2$ and $d$. Let
\[
\mathcal E:=\left\{\eta(G_0)\le\frac1{16K_\star}\right\}.
\]
By Lemma~\ref{lem:thinning-preserves-overlap},
\[
\mathbb P(\mathcal E^c)\le16K_\star\eta(G).
\]

\emph{Step 3: apply the constant-ratio theorem.} Condition on any realization of $J_0$ in $\mathcal E$. Also $\lambda_\star>2$, since
\[
\lambda_\star
=
\frac{512d^2\beta_d m_\star}{\rho}
\ge
512d^2\beta_d
>2,
\]
where $\rho\le1$ and $m_\star\ge1$. Proposition~\ref{prop:c4-constant-ratio-subgraph} applies to $G_0$ with $\varepsilon=\rho$, $\lambda=\lambda_\star$, $\eta=1/(16K_\star)$, and $K=K_\star$: the overlap condition follows from $\mathcal E$, the bound $K_\star\log K_\star\le r/2$ follows from the definition of $K_\star$, and the remaining scale conditions were checked in Step 1. Moreover
\[
\left\lfloor\frac{\rho\lambda_\star}{512d^2\beta_d}\right\rfloor=m_\star.
\]
Thus, if $\Omega$ is chosen uniformly from $\binom{J_0}r$, then
\[
\mathbb P\left(
\begin{gathered}
\text{$G_0[\Omega]$ contains}\\
\text{a \wcmrt{m_\star}}
\end{gathered}
\ \middle|\ J_0\right)
\ge
1-\exp(-\sqrt{K_\star}).
\]

\emph{Step 4: return to the original right side.} For the nested sampling procedure,
\[
\mathbb P\left(
\begin{gathered}
\text{$G[\Omega]$ fails to contain}\\
\text{a \wcmrt{m_\star}}
\end{gathered}
\right)
\le
\mathbb P(\mathcal E^c)+\exp(-\sqrt{K_\star})
\le
16K_\star\eta(G)+\exp(-\sqrt{K_\star}).
\]
By Lemma~\ref{lem:c4-sparse-overlap},
\[
\eta(G)\le2d\,\frac{k}{n}=\frac{2d}{\theta}.
\]
Hence
\[
16K_\star\eta(G)
\le
32d\,\frac{K_\star}{\theta}
\le
\frac{32d}{\log K_\star},
\]
where the last inequality uses $K_\star\log K_\star\le \theta$.

Finally, nested uniform sampling gives a uniform $r$-subset of $J$: if first $J_0$ is uniform in $\binom J{n_0}$ and then $\Omega$ is uniform in $\binom{J_0}r$, then $\Omega$ is uniform in $\binom Jr$. This proves the stated bound.

\end{proof}

\section{Trichotomy and Sampling}
\label{sec:bridge-sampling}

This section reduces \lsfci{} to the $C_4$-free sampling theorem proved in Section~\ref{sec:c4-sampling-subgraph}. The reduction has two parts. First, we choose an intermediate right set and delete the few sampled vertices that participate in sampled four-cycles; the remaining good set is $C_4$-free and still has asymptotically full size. Second, a transfer lemma says that a uniform sample from the original graph contains the same matching rooted tree patterns as a uniform sample from the good $C_4$-free subgraph, up to a negligible loss.

\subsection{From Local Four-Cycle Sparsity to a Good \texorpdfstring{$C_4$}{C4}-free Subgraph}
\begin{definition}[Double-overlap]
Column $j'$ of a matrix $M$ 
is a \keyterm{double-overlap partner} of $j$ if
\begin{equation}
\label{def:double-overlap-partner}
\bigl|\operatorname{supp}M_{\cdot j}
\cap
\operatorname{supp}M_{\cdot j'}\bigr|\ge2.
\end{equation}

\label{ass:local-sparse-double-overlap}
For a deterministic sequence of matrices  
\(
M^{(k)} \in \mathbb{R}^{k \times n_k}
\)
for $k \in \mathbb{N}$, 
we say that it satisfies
the \keyterm{local sparse double-overlap condition} if
there are sets
\[
\mathcal G_k\subseteq[n_k]
\quad \mbox{ and } \quad \varepsilon_k \searrow 0
\]
such that
\[
|\mathcal G_k|=(1-o(1))n_k
\]
and, for every $j\in\mathcal G_k$,
\[
\left|
\left\{
j'\in[n_k]\setminus\{j\}:
\bigl|\operatorname{supp}M^{(k)}_{\cdot j}
\cap
\operatorname{supp}M^{(k)}_{\cdot j'}\bigr|\ge2
\right\}
\right|
\le
\varepsilon_k\,\frac{n_k}k.
\]
\end{definition}

Observe that the assumption of the main theorem that the number of pairs of columns overlapping on two
or more rows is of order $o(n_k^2/k)$, immediately implies \lsdoubleoverlap{} for some sequence of numbers $\varepsilon_k\to0$. 

In graph language, if a pair of columns $(j,j')$ are \doubleoverlappartners, then the corresponding right vertices in the support graph have at least two common left neighbors. In other words, they form a four-cycle $C_4$ in the support graph. So from the graph-theoretic perspective, we call it the \lsfci{} condition:

\begin{definition}[Local sparse four-cycle incidence]
\label{def:local-sparse-four-cycle-incidence}

Let $G^{(k)}=(I^{(k)},J^{(k)},E^{(k)})$ be a sequence of bipartite graphs with
\[
|I^{(k)}|=k,\qquad |J^{(k)}|=n_k.
\]
For right vertices $j,j'\in J^{(k)}$, we say that $j'$ is a \fcp{} of $j$ if
\[
j'\ne j
\qquad\text{and}\qquad
|N_{G^{(k)}}(j)\cap N_{G^{(k)}}(j')|\ge2.
\]
Equivalently, the two right vertices lie together in at least one $4$-cycle.

The sequence $G^{(k)}$ has \keyterm{local sparse four-cycle incidence} if there are numbers $\varepsilon_k\to0$ such that the bad set
\[
B_{\mathrm{fc}}^{(k)}
:=
\Big\{ j\in J^{(k)}:\ j\text{ has more than }\varepsilon_k\frac{n_k}{k}\,\text{\fcp{}s}\Big\}
\]
satisfies $|B_{\mathrm{fc}}^{(k)}|=o(n_k)$.
\end{definition}

Thus the condition is local in the right vertex $j$: it allows some exceptional right vertices, but a typical right vertex has very few \fcp{}s at the scale $n_k/k$.

\begin{remark}[Local versus global four-cycle count]
\label{rem: local-sparse-four-cycle-incidence}

A stronger global hypothesis would be
\[
\mbox{the number of }4\text{-cycles in }G^{(k)}
=
o\left(\frac{n_k^2}{k}\right).
\]
This global condition implies \lsfci{} by Markov's inequality.

\end{remark}

The next proposition shows that the \lsfci{} can be upgraded to $C_4$-freeness by passing to a subset of a large size in a uniformly chosen sample.
\begin{prop}[Producing a large $C_4$-free good subgraph]
\label{prop:produce-c4-good-subgraph}

Let $G^{(k)}=(I^{(k)},J^{(k)},E^{(k)})$ be a sequence of finite bipartite graphs with
\[
|I^{(k)}|=k,\qquad |J^{(k)}|=n_k,\qquad \frac{n_k}{k}\to\infty.
\]
Assume $G^{(k)}$ has \lsfci{}, with corresponding bad sets
$B_{\mathrm{fc}}^{(k)}\subseteq J^{(k)}$ and numbers $\varepsilon_k\to0$.
There is an absolute constant
$c_{\tref{prop:produce-c4-good-subgraph}}>0$ with the following property.
Let
\[
\widetilde n_k
:=
\min\left\{
\left\lceil c_{\tref{prop:produce-c4-good-subgraph}}\varepsilon_k^{-1} k\right\rceil,
n_k
\right\},
\]
and let
$\widetilde J^{(k)}\subseteq J^{(k)}$ be uniform of size $\widetilde n_k$.
With probability $1-o(1)$, there is a subset
$\good{\widetilde J}^{(k)}\subseteq\widetilde J^{(k)}$ such that
\[
\good{\widetilde J}^{(k)}
\subseteq
\widetilde J^{(k)}\setminus B_{\mathrm{fc}}^{(k)},
\qquad
G^{(k)}[\good{\widetilde J}^{(k)}]\text{ is }C_4\text{-free}
\qquad\text{and}\qquad
|\good{\widetilde J}^{(k)}|\ge\frac9{10}\widetilde n_k.
\]

\end{prop}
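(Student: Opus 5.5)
The plan is a first-moment deletion argument. Write $\widetilde J=\widetilde J^{(k)}$, $\widetilde n=\widetilde n_k$, $B=B_{\mathrm{fc}}^{(k)}$, and define
\[
\good{\widetilde J}:=\bigl\{j\in\widetilde J\setminus B:\ j\text{ has no \fcp{} in }\widetilde J\bigr\}.
\]
With this choice the right-induced graph $G^{(k)}[\good{\widetilde J}]$ is automatically $C_4$-free, since any $4$-cycle in it would give two vertices of $\good{\widetilde J}$ that are \fcp{}s of each other. It is also contained in $\widetilde J\setminus B$ by definition. So everything reduces to showing that, with probability $1-o(1)$,
\[
|\widetilde J\cap B| + \#\{j\in\widetilde J\setminus B:\ j \text{ has a \fcp{} in }\widetilde J\}\le \tfrac1{10}\widetilde n .
\]

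The first term is easy. A uniform $\widetilde n$-subset meets $B$ in expectation in $\widetilde n|B|/n_k=o(\widetilde n)$ vertices, because $|B|=o(n_k)$. Markov's inequality then gives $|\widetilde J\cap B|\le \widetilde n/20$ with probability $1-o(1)$.

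For the second term, fix $j\notin B$. Conditional on $j\in\widetilde J$, the remaining $\widetilde n-1$ elements form a uniform subset of $J^{(k)}\setminus\{j\}$. Each particular partner $j'$ is therefore included with probability $(\widetilde n-1)/(n_k-1)\le \widetilde n/n_k$. Since $j$ has at most $\varepsilon_k n_k/k$ partners, a union bound gives
\[
\mathbb P(j\text{ has a partner in }\widetilde J\mid j\in\widetilde J)\le \varepsilon_k\frac{n_k}{k}\cdot\frac{\widetilde n}{n_k}=\varepsilon_k\frac{\widetilde n}{k}\le \varepsilon_k\cdot\frac{c\,\varepsilon_k^{-1}k+1}{k}\le 2c
\]
for large $k$. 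Multiplying by $\mathbb P(j\in\widetilde J)=\widetilde n/n_k$ and summing over $j\notin B$, the expected number of such vertices is at most $2c\,\widetilde n$. Choosing $c=c_{\tref{prop:produce-c4-good-subgraph}}$ small, say $c=1/800$, Markov's inequality bounds this count by $\widetilde n/20$ except on an event of probability at most $40\cdot 2c=1/10$.

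That is only a constant failure probability, not $o(1)$, and this is the main obstacle: the naive Markov bound does not reach $o(1)$. The fix I would try first is to keep the expectation vanishing relative to $\widetilde n$. For the proposition as stated, I would use a slightly smaller sample scale internally: the expected partner count is actually $\varepsilon_k\widetilde n/k$, and it is only at the cap $\widetilde n\approx c\varepsilon_k^{-1}k$ that this becomes a constant. Alternatively, I would bound the variance of the count, using the limited dependence of sampling without replacement, to get concentration around a mean $\le 2c\widetilde n$. Since $\widetilde n\to\infty$, Chebyshev's inequality should then give deviation $o(\widetilde n)$ with probability $1-o(1)$. I expect this second-moment estimate to be the delicate step. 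If it resists, I would fall back to replacing $c$ by a sequence $c_k\to0$ slowly, which gives $o(1)$ directly from Markov; this is compatible with how $\widetilde n_k$ is used downstream, since only $\widetilde n_k/k\to\infty$ matters.

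In the case $\widetilde n=n_k$, the sample is all of $J^{(k)}$ and there is no randomness. The same counting applies deterministically: the number of non-bad vertices having a partner is at most the number of (non-bad vertex, partner) pairs, which is at most $n_k\cdot\varepsilon_k n_k/k=o(n_k)$ because $n_k\le c\varepsilon_k^{-1}k$ in this regime.
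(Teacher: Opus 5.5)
Your reduction and first-moment computations are correct, but they only give failure probability $O(c)$. None of your three proposed fixes proves the proposition as stated.

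\emph{The first and third fixes change the statement.} Shrinking the sample scale, or replacing $c$ by $c_k\to0$, alters the proposition: $\widetilde n_k$ is fixed with an absolute constant. Since you must keep $\frac9{10}\widetilde n_k$ vertices, you cannot pass to a smaller subsample.

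\emph{The second-moment route cannot work for the count you defined.} You discard $j$ whenever it has a partner anywhere in $\widetilde J$, including partners in $B$. A vertex of $B$ may be a four-cycle partner of every vertex of $J^{(k)}\setminus B$, so there is no limited dependence. For instance, take $n_k=k^{3/2}$ and $\varepsilon_k=k^{-1/4}$. Let $t=\lfloor\varepsilon_kn_k/k\rfloor$ right vertices be adjacent to all of $I^{(k)}$, and give the remaining $n_k-t$ right vertices degree two with pairwise distinct neighbor pairs. Then:
\begin{itemize}
\item each degree-two vertex has exactly $t$ partners, so $B_{\mathrm{fc}}^{(k)}$ consists of the $t$ full-degree vertices and has size $o(n_k)$;
\item $\widetilde n_k=\lceil c\varepsilon_k^{-1}k\rceil<n_k$, and $\mathbb P(\widetilde J\cap B\ne\emptyset)\to1-e^{-c}$.
\end{itemize}
On that event every vertex of $\widetilde J\setminus B$ has a partner in $\widetilde J$, so your set $\widetilde J_{\mathrm{good}}$ is empty. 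Your count is essentially $\widetilde n$ times a Bernoulli variable, its variance is of order $\widetilde n^2$, and no concentration argument can rescue it.

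\emph{The missing idea} is to discard $B$ before looking at partners, so that only the partner graph $H$ on $J^{(k)}\setminus B$ matters. By the definition of $B$, it has maximum degree $\Delta(H)\le\varepsilon_kn_k/k$, which is exactly the limited dependence you need. Let $W$ be the number of edges of $H$ with both endpoints in $\widetilde J$.
\begin{itemize}
\item $\mathbb E W\le\varepsilon_k\widetilde n^2/(2k)\le(c/2+o(1))\widetilde n$.
\item Disjoint edges have nonpositive covariance under sampling without replacement, because $(\widetilde n)_4/(n_k)_4\le\bigl((\widetilde n)_2/(n_k)_2\bigr)^2$.
\item There are at most $n_k\Delta(H)^2$ ordered pairs of distinct edges sharing a vertex, each present with probability at most $(\widetilde n/n_k)^3$.
\end{itemize}
Hence $\operatorname{Var}W\le\mathbb E W+\varepsilon_k^2\widetilde n^3/k^2=O(\widetilde n)=o(\widetilde n^2)$. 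Chebyshev then gives $W\le\widetilde n/20$ with probability $1-o(1)$ once $c$ is small. Remove $\widetilde J\cap B$, then one endpoint of each edge of $H[\widetilde J\setminus B]$. What remains is $H$-independent, hence $C_4$-free, and has size at least $\frac9{10}\widetilde n$. This is the paper's argument.

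A minor slip: in the case $\widetilde n_k=n_k$ your pair count is $(c+o(1))n_k$, not $o(n_k)$. In fact there $\varepsilon_kn_k/k<1$, so non-bad vertices have no partners at all.
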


\begin{proof}
Let
\[
c:=c_{\tref{prop:produce-c4-good-subgraph}},
\qquad
n:=n_k,\qquad
\varepsilon:=\varepsilon_k,\qquad
B:=B_{\mathrm{fc}}^{(k)}.
\]
Set $m:=\widetilde n_k$ and choose $\widetilde J:=\widetilde J^{(k)}$
uniformly from $\binom{J^{(k)}}{m}$. Since
$m\le c\varepsilon^{-1}k+1$, we have
\[
\varepsilon\frac{m}{k}\le c+o(1).
\]

First remove the four-cycle-bad vertices. Let
\(
Z:=|\widetilde J\cap B|.
\)
Then
\[
\mathbb E Z
=
m\,\frac{|B|}{n}
=o(m),
\]
so $Z=o(m)$ with probability $1-o(1)$ by Markov's inequality.

Let $H$ be the graph on $J^{(k)}\setminus B$ in which two vertices are
adjacent if they are \fcp{}s. By the definition of $B$,
\[
\Delta(H)\le\varepsilon\frac nk,
\qquad
|E(H)|\le \frac12 n\varepsilon\frac nk,
\]
where $\Delta(H)$ denotes the maximal degree, and $E(H)$ is the set of edges of $H$.
Let $W$ be the number of edges of $H$ contained in $\widetilde J$. Then
\[
\mathbb E W
=
|E(H)|\frac{\ff{m}{2}}{\ff{n}{2}}
\le
C\varepsilon\frac{m^2}{k}
\le
C(c+o(1))m,
\]
where we recall that $\ff{a}{q}:=a(a-1)\cdots(a-q+1)$ is the falling factorial (see \eqref{eq:falling-factorial}).
Choosing $c_{\tref{prop:produce-c4-good-subgraph}}$ sufficiently small, we
may assume that $\mathbb E W\le m/100$ for all sufficiently large $k$.

We next bound the variance of $W$. Write
$W=\sum_{e\in E(H)}X_e$, where $X_e$ is the indicator of the event
$e\subseteq\widetilde J$. If $e$ and $f$ are disjoint edges, then sampling
without replacement gives
\[
\mathbb E X_eX_f
=
\frac{\ff{m}{4}}{\ff{n}{4}}
\le
\left(\frac{\ff{m}{2}}{\ff{n}{2}}\right)^2
=
\mathbb E X_e\,\mathbb E X_f.
\]
Thus disjoint edge pairs have nonpositive covariance. Hence
\[
\operatorname{Var}(W)
\le
\mathbb E W
+
\sum_{\substack{e,f\in E(H):\,e\ne f\\ e\cap f\ne\emptyset}}
\mathbb E X_eX_f.
\]
The number of ordered pairs of distinct adjacent edges in $H$ is at most
$n\Delta(H)^2$, and for each such pair
\[
\mathbb E X_eX_f
=
\frac{\ff{m}{3}}{\ff{n}{3}}
\le
C\left(\frac mn\right)^3.
\]
Therefore
\[
\operatorname{Var}(W)
\le
C m
+
C n\left(\varepsilon\frac nk\right)^2\left(\frac mn\right)^3
\le
C m+C\varepsilon^2\frac{m^3}{k^2}
=o(m^2),
\]
where the last step uses $m\to\infty$ and $m\le c\varepsilon^{-1}k+1$.
By Chebyshev's inequality,
\[
\mathbb P\left(W>\frac1{20}m\right)=o(1).
\]

On the event $Z=o(m)$ and $W\le m/20$, remove from $\widetilde J$ all
vertices in $B$ and then delete one endpoint from each remaining edge of
$H[\widetilde J\setminus B]$. The resulting set
$\good{\widetilde J}^{(k)}$ is contained in
$\widetilde J^{(k)}\setminus B_{\mathrm{fc}}^{(k)}$, has size at least
$m-o(m)-m/20\ge 9m/10$ for all large $k$, and is independent in $H$. Hence no
two of its vertices are \fcp{}s, so
$G^{(k)}[\good{\widetilde J}^{(k)}]$ is $C_4$-free.

\end{proof}

\subsection{Average-Degree Trichotomy}
The next proposition is the main structural result.
It states that if a sequence of graphs satisfies the conditions of Theorem \ref{thm:weighted-local-sparse}, then, with high probability, a random sample contains either an empty column, or two columns with only one entry in each row, or  a \wcmrt{m_k} with $m_k \to \infty$.
\begin{prop}[Average-degree trichotomy]
\label{prop:average-degree-bridge-alternatives}

Fix $\rho\in(0,1]$ and a constant $d_{\rm av}>0$. Let
\[
G^{(k)}=(I^{(k)},J^{(k)},M^{(k)})
\]
be a deterministic sequence of weighted bipartite graphs, where
$M^{(k)}\in\mathbb R^{I^{(k)}\times J^{(k)}}$.
Assume
\[
|I^{(k)}|=k,\qquad |J^{(k)}|=n_k,\qquad \frac{n_k}{k}\to\infty.
\]
Assume the following properties hold for all sufficiently large $k$:
\begin{itemize}
\item The support graph of $G^{(k)}$ has \lsfci{} with parameter $\varepsilon_k\to0$.
\item The average right degree is at most $d_{\rm av}$:
\[
\frac1{n_k}\sum_{j\in J^{(k)}}
|N_{G^{(k)}}(j)|
\le d_{\rm av}.
\]
\item The support is nonempty, and the average magnitude of the nonzero weights is at most $1$:
\[
\frac1{|E^{(k)}|}\sum_{(i,j)\in E^{(k)}} |M^{(k)}_{ij}|
\le 1,
\]
where $E^{(k)} = \{(i,j)\in I^{(k)}\times J^{(k)}:M^{(k)}_{ij}\ne0\}$ is the edge set of $G^{(k)}$; see Definition \ref{def:weighted-bipartite-graph}.
\end{itemize}
Let $r(k)$ be an integer sequence with $\rho k\le r(k)\le k$.
Then there is a sequence $m_k\to\infty$ such that, for a uniform sample
\[
\Omega_k\subseteq J^{(k)},\qquad \text{of size } |\Omega_k|=r(k),
\]
with probability $1-o(1)$ at least one of the following holds:

\begin{enumerate}
\item $G^{(k)}[\Omega_k]$ contains an isolated right vertex.

\item $G^{(k)}[\Omega_k]$ contains distinct degree-one right vertices $j,j'$ with the same unique left neighbor:
\[
N_{G^{(k)}}(j)=N_{G^{(k)}}(j')=\{i\}
\]
for some $i\in I^{(k)}$.

\item $G^{(k)}[\Omega_k]$ contains a \wcmrt{m_k}
whose right vertices have support degree at most $40d_{\rm av}$ and whose
incident nonzero weights have absolute value at most $10d_{\rm av}$.

\end{enumerate}
\end{prop}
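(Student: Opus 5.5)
We plan to reduce to Theorem~\ref{thm:c4-finite-subgraph} by two-stage sampling. First we truncate. Call a right vertex \emph{bad} if its support degree exceeds $40d_{\rm av}$, or if some incident nonzero weight has absolute value exceeding $10d_{\rm av}$. By Markov's inequality, at most $n_k/40$ right vertices have degree above $40d_{\rm av}$. The total weight is at most $|E^{(k)}|\le d_{\rm av}n_k$, so at most $n_k/10$ edges carry weight exceeding $10d_{\rm av}$. Hence at most a constant fraction (about $1/8$) of right vertices are bad.

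This is only a constant fraction, so bad vertices cannot simply be discarded from the sample. Instead we use that the final sample is $\Omega_k$, and we only need \emph{some} subgraph $G'$ of $G[\Omega_k]$, obtained by deleting right vertices, that contains the tree. Indeed, an $m$-matching rooted tree in $G[\Omega']$ with $\Omega'\subseteq\Omega_k$ remains a tree in $G[\Omega_k]$, because the definition only involves neighborhoods of the tree's own right vertices, which are unchanged. Weight-compatibility also persists, since it only involves those vertices' weights. So deleting vertices from the sample is harmless.

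Concretely, we sample $\Omega_k$ in two stages. First choose $\widetilde J$ uniform of size $\widetilde n_k$ as in Proposition~\ref{prop:produce-c4-good-subgraph}, with $\widetilde n_k/k\to\infty$ since $\varepsilon_k\to0$; if needed we enlarge $\varepsilon_k$ slowly so that also $\widetilde n_k\le n_k$. Then choose $\Omega_k$ uniform of size $r(k)$ inside $\widetilde J$. With probability $1-o(1)$ we obtain $\good{\widetilde J}$, which is $C_4$-free and has size at least $\tfrac9{10}\widetilde n_k$. Within it we keep the good vertices of degree between $2$ and $40d_{\rm av}$; concentration (hypergeometric, Chebyshev) shows the bad fraction is still at most about $1/8+o(1)$. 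Call the resulting set $J_1$.

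The vertices of degree $0$ or $1$ are handled separately by the first two alternatives. Let $p_0$ and $p_1$ be the fractions of right vertices of degree $0$ and $1$ in $\widetilde J$.
\begin{itemize}
\item If $p_0 \ge \delta_k$ with $\delta_k\to0$ slowly (say $\delta_k k\to\infty$), then $\Omega_k$ contains an isolated vertex w.h.p., since its expected number is $r p_0\to\infty$ and the count concentrates.
\item For degree-one vertices, group them by their unique neighbor $i$. If $p_1\widetilde n_k$ is much larger than $k$ (say $\ge k\omega_k$), then the sampled degree-one vertices, about $r p_1\gtrsim \omega_k\rho k^2/\widetilde n_k$ of them, are distributed among at most $k$ boxes. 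A birthday/second-moment argument on collision pairs should give a collision w.h.p. when the expected number of collisions $\sum_i \binom{c_i}{2}(r/\widetilde n)^2\ge (p_1\widetilde n)^2/(2k)\cdot (r/\widetilde n)^2\to\infty$, i.e.\ when $p_1\gg \sqrt{k}/r$, which holds since $r\asymp k$.
\end{itemize}
Otherwise, the degree-$\le1$ vertices form an $o(1)$ fraction. Note that deleting degree-one vertices is also harmless.

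With the restricted set $J_1$, of size $\gtrsim \tfrac34\widetilde n_k$ with $|J_1|/k\to\infty$, we condition on $|\Omega_k\cap J_1|=r'$. This intersection is uniform in $J_1$, and $r'\ge\rho k/2$ w.h.p. We then apply Theorem~\ref{thm:c4-finite-subgraph} to $G[J_1]$ with $d=\lfloor 40d_{\rm av}\rfloor$, $\rho/2$, $r'$, and $n=|J_1|$. Then $\Lambda\to\infty$, so $K_\star\to\infty$, and we may take $m_\star=m_k\to\infty$ slowly. The failure probability $\exp(-\sqrt{K_\star})+32d/\log K_\star$ is $o(1)$. The weights in the resulting tree are at most $10d_{\rm av}$ by construction.

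The main obstacle is the low-degree dichotomy in the second step, especially the degree-one case. When the degree-one vertices are neither negligible nor numerous enough to force a collision, we need a clean threshold. Here $p_1\widetilde n_k$ is at most about $k$ times a slowly growing factor. Then $p_1\le \omega_k k/\widetilde n_k\to0$ provided $\omega_k$ grows slower than $\widetilde n_k/k$, so these vertices are negligible and can be deleted. Keeping $\omega_k$ and $\delta_k$ compatible with the growth of $\widetilde n_k/k$ is the point that needs care.
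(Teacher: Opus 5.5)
Your plan follows the paper's proof closely. It uses two-stage exposure through $\widetilde J$, Proposition~\ref{prop:produce-c4-good-subgraph} for a $C_4$-free subset, and deletion of the high-degree and large-weight vertices (at most $n_k/40+n_k/10$). Degree-$0$ and degree-$1$ vertices go to alternatives (1) and (2), and the rest is conditioning on $|\Omega_k\cap J_1|$ followed by Theorem~\ref{thm:c4-finite-subgraph}. Two variations are harmless: you measure $p_0,p_1$ inside the random set $\widetilde J$ rather than in $J^{(k)}$, and you use a second-moment collision count instead of the paper's Maclaurin/birthday bound. Both are fine because your conditional bounds are uniform in the realization of $\widetilde J$.

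The degree-one step, however, is wrong as stated. The expected number of sampled collisions is about $p_1^2r^2/(2k)$, so alternative (2) needs $p_1\sqrt k\to\infty$. Your threshold $p_1\widetilde n_k\ge k\omega_k$ only guarantees $p_1\widetilde n_k\gg k$, which is what makes the Cauchy--Schwarz correction (or the paper's factor $\frac1{2k}-\frac1T$) negligible. It gives $p_1\ge k\omega_k/\widetilde n_k$, and this is not $\gg 1/\sqrt k$ when $\widetilde n_k\gg k^{3/2}\omega_k$. The hypotheses allow that regime: for instance $\varepsilon_k=1/k$ and $n_k\ge k^2$ give $\widetilde n_k\asymp k^2$.

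Concretely, take $\widetilde n_k\asymp k^2$ with about $\omega_k$ degree-one vertices attached to each left vertex. Your threshold is met, but only about $\rho\omega_k$ of them are sampled, and for $\omega_k\ll\sqrt k$ there is no collision with high probability. So ``which holds since $r\asymp k$'' is false, and this branch is not covered by alternative (2).

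In this regime $p_1\to0$, so the repair is just to move the threshold, as the paper does. Take $\delta_k\to0$ with both $\delta_k\widetilde n_k/k\to\infty$ and $\delta_k^2k\to\infty$, e.g.\ $\delta_k=\min(\widetilde n_k/k,\sqrt k)^{-1/2}$. Equivalently, pick $\omega_k\to\infty$ with $\widetilde n_k/k^{3/2}\ll\omega_k\ll\widetilde n_k/k$; this window is nonempty because $k\to\infty$. If $p_1\ge\delta_k$, the birthday argument forces a collision. If $p_1<\delta_k$, the degree-one vertices are an $o(1)$ fraction of $\widetilde J$ and can be deleted.

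Alternatively, replace $\varepsilon_k$ by $\max(\varepsilon_k,k^{-1/2})$. This preserves local sparse four-cycle incidence, since the bad set only shrinks, and caps $\widetilde n_k\lesssim k^{3/2}$, after which your original threshold works.

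With this correction the rest of your argument goes through. Note also that $d=\lfloor40d_{\rm av}\rfloor\ge2$, needed for Theorem~\ref{thm:c4-finite-subgraph}, follows from $J_1\ne\emptyset$.
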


The proof uses the following deterministic bad right-vertex sets. The first
two are the low-degree obstruction sets. For a degree cutoff $\Delta\ge2$,
the third set is the high-degree bad set. For a weight cutoff $A>0$, the
fourth set is the large-entry bad set. After fixing a witness
$\varepsilon_k\to0$ for \lsfci{}, the fifth set is the four-cycle-bad set:
\begin{equation}
\label{eq:bridge-deterministic-sets}
\begin{aligned}
B_0^{(k)}
&:=\{j\in J^{(k)}:|N_{G^{(k)}}(j)|=0\},\\
B_1^{(k)}
&:=\{j\in J^{(k)}:|N_{G^{(k)}}(j)|=1\},\\
B_{\mathrm{deg}}^{(k)}(\Delta)
&:=\{j\in J^{(k)}:|N_{G^{(k)}}(j)|>\Delta\},\\
B_{\mathrm{amp}}^{(k)}(A)
&:=\{j\in J^{(k)}:\exists i\in N_{G^{(k)}}(j)\text{ with }|M^{(k)}_{ij}|>A\},\\
B_{\mathrm{fc}}^{(k)}
&:=\Big\{j\in J^{(k)}:\ j\text{ has more than }
\varepsilon_k\frac{n_k}k\text{ \fcp{}s}\Big\}.
\end{aligned}
\end{equation}

For all the above bad sets, the last three can be chosen to have relatively small sizes. The size of $B_0^{(k)}$ and $B_1^{(k)}$ can be large, but they create the first two alternatives in the conclusion of Proposition~\ref{prop:average-degree-bridge-alternatives}, as shown in the next two lemmas. 

\begin{lemma}[Many isolated right vertices]
\label{lem:many-isolated-right-vertices}
Work under the assumptions and notation of Proposition~\ref{prop:average-degree-bridge-alternatives}. Let $\Omega_k\subseteq J^{(k)}$ be uniform of size $r(k)$, and let $\delta_k\downarrow0$ satisfy $\delta_k k\to\infty$.
If $|B_0^{(k)}|\ge\delta_k n_k$, then $G^{(k)}[\Omega_k]$ contains an isolated right vertex with probability $1-o(1)$.
\end{lemma}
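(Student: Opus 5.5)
The plan is a direct capture estimate. An isolated sampled right vertex in $G^{(k)}[\Omega_k]$ is exactly a vertex of $\Omega_k\cap B_0^{(k)}$, because an isolated right vertex has empty neighborhood in $G^{(k)}$ and restricting to $\Omega_k$ does not change right neighborhoods. So it suffices to show
\[
\mathbb P\bigl(\Omega_k\cap B_0^{(k)}=\emptyset\bigr)=o(1).
\]

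Write $n=n_k$, $r=r(k)$ and $b=|B_0^{(k)}|\ge\delta_k n$. Sampling without replacement gives the exact formula
\[
\mathbb P\bigl(\Omega_k\cap B_0^{(k)}=\emptyset\bigr)
=\frac{\binom{n-b}{r}}{\binom nr}
=\prod_{\ell=0}^{r-1}\frac{n-b-\ell}{n-\ell}
\le\Bigl(1-\frac bn\Bigr)^{r},
\]
where each factor is bounded using $\frac{n-b-\ell}{n-\ell}=1-\frac{b}{n-\ell}\le 1-\frac bn$. If $b>n-r$ the probability is simply zero. It follows that
\[
\mathbb P\bigl(\Omega_k\cap B_0^{(k)}=\emptyset\bigr)\le\exp(-r b/n)\le\exp(-\delta_k r(k))\le\exp(-\rho\,\delta_k k).
\]

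Since $r(k)\ge\rho k$ and $\delta_k k\to\infty$ with $\rho>0$ fixed, the right-hand side tends to $0$. There is no real obstacle here. The only points to check are that sampled right vertices keep their full $G^{(k)}$-neighborhoods, and that the hypothesis $\delta_k k\to\infty$, not $\delta_k n_k\to\infty$, is what drives the bound, which is why the lower bound $r(k)\ge\rho k$ is used.
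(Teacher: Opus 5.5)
Your proposal is correct and is essentially the paper's proof: both write the avoidance probability as $\prod_{\ell=0}^{r-1}\bigl(1-\frac{b}{n-\ell}\bigr)\le\exp(-rb/n)$, use $r(k)\ge\rho k$ to get $\exp(-\rho\,\delta_k k)=o(1)$, and dispose of the case $b+r>n$ separately. You also make explicit that the constant is $\rho$ and that right-induced subgraphs preserve right neighborhoods, both of which the paper uses implicitly.
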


\begin{proof}
If $|B_0^{(k)}| + r(k) > n_k$, then $\Omega_k$ must contain an isolated right vertex. Assume now that $|B_0^{(k)}| + r(k) \le n_k$. Then the probability that $\Omega_k$ contains no isolated right vertex is
\[
\mathbb P(\Omega_k\cap B_0^{(k)}=\emptyset)
=
\prod_{s=0}^{r(k)-1}\left(1-\frac{|B_0^{(k)}|}{n_k-s}\right)
\le
\left(1-\frac{|B_0^{(k)}|}{n_k}\right)^{r(k)}
\le
\exp\left(-\frac{r(k)|B_0^{(k)}|}{n_k}\right)
\le
\exp(-c\delta_k k)
=o(1),
\]
where $c>0$ depends only on $\rho$.
\end{proof}

For the next lemma, we need a stronger condition on $\delta_k$, instead of $\delta_kk \to \infty$, we require $\delta_k^2k \to \infty$.

\begin{lemma}[Many degree-one right vertices]
\label{lem:many-degree-one-right-vertices}
Work under the assumptions and notation of Proposition~\ref{prop:average-degree-bridge-alternatives}. Let $\Omega_k\subseteq J^{(k)}$ be uniform of size $r(k)$, and let $\delta_k\downarrow0$ satisfy
\[
\delta_k \frac{n_k}{k}\to\infty,
\qquad
\delta_k^2k\to\infty.
\]
If $|B_1^{(k)}|\ge\delta_k n_k$, then with probability $1-o(1)$, the graph $G^{(k)}[\Omega_k]$ contains distinct degree-one right vertices $j,j'$ with the same unique left neighbor.
\end{lemma}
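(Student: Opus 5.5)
The plan is to reduce the statement to a birthday-problem estimate. For each left vertex $i$, let $C_i:=\{j\in B_1^{(k)}:N_{G^{(k)}}(j)=\{i\}\}$. These classes partition $B_1^{(k)}$ into at most $k$ pieces, and the desired event is exactly that $\Omega_k$ contains two vertices from the same class $C_i$. Throughout we use $|B_1^{(k)}|\ge\delta_k n_k$ and the fact that, by the hypothesis $\delta_k n_k/k\to\infty$, the average class size $|B_1^{(k)}|/k$ tends to infinity.

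\emph{Step 1: regularize the class sizes.} Set $s:=\lfloor |B_1^{(k)}|/(2k)\rfloor$; note $s\to\infty$. The classes with $|C_i|<s$ contain in total at most $ks\le |B_1^{(k)}|/2$ vertices. Each remaining class has $|C_i|\ge s$, and we split it into sub-boxes whose sizes lie in $[s,2s)$. A collision inside a sub-box is in particular a collision inside a class. Let $U$ be the union of the sub-boxes. Then $|U|\ge |B_1^{(k)}|/2\ge \delta_k n_k/2$, every sub-box has size at most $2s\le |B_1^{(k)}|/k$, and every sub-box has size at least $s$.

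\emph{Step 2: the sample puts many points in $U$.} The quantity $X:=|\Omega_k\cap U|$ is hypergeometric with mean $r(k)|U|/n_k\ge \rho\delta_k k/2\to\infty$. By Chebyshev's inequality (the variance of a hypergeometric variable is at most its mean), we get $X\ge x_0:=\rho\delta_k k/4$ with probability $1-o(1)$. Conditionally on $X$, the set $\Omega_k\cap U$ is a uniform $X$-subset of $U$. We reveal its first $x_0$ elements one at a time, each chosen uniformly among the not-yet-chosen elements of $U$.

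\emph{Step 3: sequential collision bound.} Suppose the first $t\le x_0$ revealed points lie in $t$ distinct sub-boxes. Each of these sub-boxes still has at least $s-1\ge s/2$ unchosen elements. Hence the $(t+1)$-st point lands in an occupied sub-box with probability at least $ts/(2|U|)$. Since $|U|\le |B_1^{(k)}|\le 2k(s+1)$, this probability is at least $ct/k$ for an absolute constant $c>0$. Multiplying over $t<x_0$, the probability that no collision occurs is at most
\[
\prod_{t<x_0}\Bigl(1-\frac{ct}{k}\Bigr)\le \exp\Bigl(-\frac{c\,x_0^2}{4k}\Bigr)=\exp\bigl(-c'\rho^2\delta_k^2k\bigr)=o(1),
\]
using the hypothesis $\delta_k^2k\to\infty$. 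A collision gives two distinct degree-one sampled vertices $j,j'$ with the same unique neighbor $i$, which is exactly the conclusion.

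The main obstacle is that the class sizes $|C_i|$ can be very unequal. That is why Step 1 truncates the small classes and splits the large ones into boxes of comparable size $\asymp s$: this guarantees that every occupied box has a uniformly large remaining mass. Without it, a naive second-moment count of colliding pairs would require controlling $\sum_i|C_i|^3$. The rest is routine: the hypergeometric concentration in Step 2, and the check that at most $x_0\ll s$ draws never deplete a box below $s/2$.
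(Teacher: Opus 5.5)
Your argument is correct, but it handles the unequal class sizes differently from the paper.

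\textbf{The paper's route.} The paper keeps all of $B_1^{(k)}$ and conditions on $S=|\Omega_k\cap B_1^{(k)}|=s$. It writes the exact no-collision probability as $e_s(t_1,\dots,t_k)/\binom{T}{s}$, where $t_i=|C_i|$ and $T=|B_1^{(k)}|$. It then applies Maclaurin's inequality $e_s(t)\le\binom ks(T/k)^s$, which reduces everything to the equal-size configuration. The resulting birthday product $\prod_{i<s}(1-i/k)\cdot\prod_{i<s}T/(T-i)$ is estimated directly, with $T/k\to\infty$ used to absorb the second factor.

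\textbf{Your route.} You discard the classes of size below $s\approx T/(2k)$, losing at most half the mass. You then use sequential exposure: given no collision among the first $t$ draws, the $t$ occupied classes still hold at least $t(s-1)$ unchosen points. Since $|U|\le|B_1^{(k)}|<2k(s+1)$, the next draw therefore collides with probability $\gtrsim t/k$.

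\textbf{Comparison.} Your argument is more elementary and self-contained. It uses Chebyshev in place of Hoeffding's comparison theorem and needs no symmetric-function inequality. It is also robust: it only requires a constant fraction of $B_1^{(k)}$ to lie in classes of size $\gtrsim T/k$. The paper's argument is shorter and identifies the equal-size configuration as the exact extremal case for the no-collision probability.

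\textbf{Two small remarks.}
\begin{enumerate}
\item Splitting the large classes into sub-boxes is superfluous. Step~3 uses only the lower bound $s$ on box sizes and the bound on $|U|$, so intact large classes work equally well.
\item Your closing claim that $x_0\ll s$ is not true in general. For example, $n_k=k^{3/2}$ and $\delta_k=k^{-1/4}$ satisfy both hypotheses, yet $s\lesssim k^{1/2}$ while $x_0\asymp k^{3/4}$. The claim is also not needed: on the no-collision event each occupied box has lost exactly one point. Step~3 as you wrote it already uses only this, so you should simply drop that remark.
\end{enumerate}
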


\begin{proof}
For $i\in I^{(k)}$, let
\[
t_i:=|\{j\in B_1^{(k)}:N_{G^{(k)}}(j)=\{i\}\}|,
\qquad
T:=\sum_i t_i=|B_1^{(k)}|.
\]
Since $\delta_k R_k\to\infty$, we have $T\gg k$. Let
\[
S:=|\Omega_k\cap B_1^{(k)}|.
\]
Then
\[
\mathbb E S
=
r(k)\frac{T}{n_k}
\ge
\rho\delta_k k
\to\infty.
\]
By Hoeffding's comparison theorem for sampling without replacement
\cite[Theorem~4]{Hoeffding1963} (see also
\cite{BoucheronLugosiMassart2013} for a modern proof), the binomial Chernoff
lower-tail bound applies to the hypergeometric random variable $S$. Hence
\[
\mathbb P\left(S\le \frac12\mathbb E S\right)
\le
\exp(-c\mathbb E S)
=o(1).
\]
In particular, $S\ge c'\delta_k k$ with probability $1-o(1)$.

Conditional on $S=s$, the set $\Omega_k\cap B_1^{(k)}$ is a uniform $s$-subset of $B_1^{(k)}$. The no-collision event is the event that this subset contains at most one vertex from each block
\[
\{j\in B_1^{(k)}:N_{G^{(k)}}(j)=\{i\}\},
\qquad i\in I^{(k)}.
\]
Thus
\[
\mathbb P(\text{no collision}\mid S=s)
=
\frac{\sum_{A\subseteq I^{(k)},\,|A|=s}\prod_{i\in A}t_i}{\binom Ts},
\]
where the numerator counts the choices of $s$ blocks and one degree-one vertex from each chosen block. To bound this probability, note that the numerator is the $s$-th elementary symmetric sum of the numbers $(t_i:i\in I^{(k)})$. Under the constraint that $T = \sum_i t_i$ is fixed and $t_i\ge0$, the elementary symmetric sums are maximized when all $t_i$ are equal, which can be established by showing that averaging any two $t_i$ and applying the AM-GM inequality will only increase the symmetric sums. This is known as Maclaurin's inequality for elementary symmetric means \cite[p.~52]{HardyLittlewoodPolya1952}, which yields
\[
\sum_{A\subseteq I^{(k)},\,|A|=s}\prod_{i\in A}t_i
\le
\binom ks\left(\frac Tk\right)^s.
\]
The right-hand side is exactly the value when $t_i=T/k$ for all $i$.

An estimate of the conditional probability of the lack of collisions follows the standard birthday paradox argument.
More precisely,
\[
\mathbb P(\text{no collision}\mid S=s)
\le
\frac{\binom ks(T/k)^s}{\binom Ts}
=
\prod_{i=0}^{s-1}\frac{k-i}{k}
\prod_{i=0}^{s-1}\frac{T}{T-i},
\]
where
\[
\prod_{i=0}^{s-1} \frac{k-i}{k}
= \prod_{i=0}^{s-1} \left(1 - \frac{i}{k}\right)
\le \exp\left(-\sum_{i=0}^{s-1} \frac{i}{k}\right)
= \exp\left(-\frac{s(s-1)}{2k}\right).
\]
Our assumption on $\delta_k$ implies that $s=o(T)$, because $s\le r(k)\le k$, while
\[
\frac{T}{k}\ge \frac{\delta_k n_k}{k} \to \infty.
\]
Hence,
\[
\prod_{i=0}^{s-1} \frac{T}{T-i}
=
\prod_{i=0}^{s-1} \left( 1 + \frac{i}{T-i} \right)
\le
\exp\left(\sum_{i=0}^{s-1} 2\frac{i}{T}\right)
= \exp\left(\frac{s(s-1)}{T}\right).
\]

This shows that for every $c'\delta_k k \le s\le r(k)$,
\[
\mathbb P(\text{no collision}\mid S=s)
\le \exp \left(- s(s-1) \left[ \frac{1}{2k}- \frac{1}{T} \right] \right)
\le
\exp\left(-c''\frac{s^2}{k}\right)
\]
for some universal constant $c''>0$.

Therefore, on the likely event $S\ge c'\delta_k k$, the exponent is at least a constant times $\delta_k^2k\to\infty$. Thus a collision occurs with probability $1-o(1)$.
\end{proof}

\begin{lemma}[Large-entry right vertices]
\label{lem:large-entry-right-vertices}
Work under the assumptions and notation of Proposition~\ref{prop:average-degree-bridge-alternatives}. Then
\[
|B_{\mathrm{amp}}^{(k)}(10d_{\rm av})|
\le
\frac1{10}n_k.
\]
\end{lemma}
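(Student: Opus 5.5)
This is a Markov-type counting argument on the edge weights, followed by a union bound over edges. Write $E=E^{(k)}$ for the edge set and $A:=10d_{\rm av}$.

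\emph{Step 1: the edge count.} By the average right degree hypothesis,
\[
|E|=\sum_{j}|N_{G^{(k)}}(j)|\le d_{\rm av}n_k .
\]

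\emph{Step 2: bound the number of heavy edges.} By the average magnitude hypothesis, $\sum_{(i,j)\in E}|M^{(k)}_{ij}|\le|E|$. Markov's inequality on the set of edges then gives
\[
\bigl|\{(i,j)\in E:\ |M^{(k)}_{ij}|>A\}\bigr|\;\le\;\frac{1}{A}\sum_{(i,j)\in E}|M^{(k)}_{ij}|\;\le\;\frac{|E|}{A}\;\le\;\frac{d_{\rm av}n_k}{10d_{\rm av}}\;=\;\frac{n_k}{10}.
\]

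\emph{Step 3: pass from edges to right vertices.} Each $j\in B_{\mathrm{amp}}^{(k)}(A)$ is incident to at least one heavy edge. Map each such $j$ to one of its heavy edges; distinct $j$ give distinct edges. Therefore $|B_{\mathrm{amp}}^{(k)}(A)|$ is at most the number of heavy edges, which is at most $n_k/10$ by Step 2.

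There is no real obstacle here. The only point to check is that the hypotheses are used as stated: the average-magnitude hypothesis is normalized by $|E|$, not by $n_k$, so combining it with the degree bound $|E|\le d_{\rm av}n_k$ is exactly what produces the factor $d_{\rm av}$ that cancels against the cutoff $10d_{\rm av}$. The nonemptiness of the support, which is assumed, is what makes the average well defined.
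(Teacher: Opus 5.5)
Your proof is correct and is essentially the paper's argument: the paper also chooses one heavy edge for each vertex of $B_{\mathrm{amp}}^{(k)}(10d_{\rm av})$ and compares $10d_{\rm av}\,|B_{\mathrm{amp}}^{(k)}(10d_{\rm av})|$ with $\sum_{(i,j)\in E^{(k)}}|M^{(k)}_{ij}|\le|E^{(k)}|\le d_{\rm av}n_k$, which simply merges your Steps 2 and 3 into one chain. Dividing by $d_{\rm av}$ is legitimate because nonempty support together with $|E^{(k)}|\le d_{\rm av}n_k$ forces $d_{\rm av}>0$.
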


\begin{proof}
Recall that
$E^{(k)} = \{(i,j)\in I^{(k)}\times J^{(k)}:M^{(k)}_{ij}\ne0\}$.
For each $j\in B_{\mathrm{amp}}^{(k)}(10d_{\rm av})$, choose one edge
$(i_j,j)$ with $|M^{(k)}_{i_jj}|>10d_{\rm av}$. The chosen edges are
distinct. Hence
\[
10d_{\rm av}|B_{\mathrm{amp}}^{(k)}(10d_{\rm av})|
\le
\sum_{j\in B_{\mathrm{amp}}^{(k)}(10d_{\rm av})} |M^{(k)}_{i_jj}|
\le
\sum_{(i,j)\in E^{(k)}}|M^{(k)}_{ij}|
\le
|E^{(k)}|,
\]
where the last inequality follows from the average magnitude assumption of Proposition~\ref{prop:average-degree-bridge-alternatives}.
Further, the average right-degree bound implies
\[
|E^{(k)}|
=
\sum_{j\in J^{(k)}} |N_{G^{(k)}}(j)|
\le d_{\rm av}n_k.
\]
Combining the last two inequalities and dividing by $d_{\rm av}$ gives the desired bound.
\end{proof}

\begin{lemma}[Weighted subgraph after trimming]
\label{lem:weighted-subgraph-after-trimming}
Work under the assumptions and notation of Proposition~\ref{prop:average-degree-bridge-alternatives}. Let $\delta_k\downarrow0$ and assume
\[
|B_0^{(k)}|<\delta_k n_k,
\qquad
|B_1^{(k)}|<\delta_k n_k.
\]
Fix a witness $\varepsilon_k\to0$ for \lsfci{}.
Set
\[
\widetilde n_k
:=
\min\left\{
\left\lceil c_{\tref{prop:produce-c4-good-subgraph}}\varepsilon_k^{-1} k\right\rceil,
n_k
\right\}.
\]
Let
\[
\widetilde J^{(k)}\subseteq J^{(k)}
\]
be uniform with $|\widetilde J^{(k)}|=\widetilde n_k$.

Then, with probability $1-o(1)$, there is a set
$J_{\rm sub}^{(k)}\subseteq\widetilde J^{(k)}$ such that
\[
|J_{\rm sub}^{(k)}|\ge\frac23\widetilde n_k,
\]
the graph $G^{(k)}[J_{\rm sub}^{(k)}]$ is $C_4$-free, all right support
degrees in this subgraph lie between $2$ and $40d_{\rm av}$, and all its nonzero
weights have absolute value at most $10d_{\rm av}$.
\end{lemma}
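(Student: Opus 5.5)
The plan is to start from Proposition~\ref{prop:produce-c4-good-subgraph} and then delete, from the resulting $C_4$-free good set, every right vertex lying in one of the bad deterministic sets $B_0^{(k)}$, $B_1^{(k)}$, $B_{\mathrm{deg}}^{(k)}(40d_{\rm av})$ and $B_{\mathrm{amp}}^{(k)}(10d_{\rm av})$. Deleting right vertices preserves $C_4$-freeness by Lemma~\ref{lem:c4-basic}(3). The remaining right vertices then have support degree between $2$ and $40d_{\rm av}$, and all their incident weights are at most $10d_{\rm av}$ in absolute value. The degrees are not changed by passing to a right-induced subgraph, since $G[\Omega]$ keeps the whole left side $I$. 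So the only task is to show that the deleted portion of $\widetilde J^{(k)}$ is small, namely at most $\bigl(\tfrac{9}{10}-\tfrac23\bigr)\widetilde n_k$ with probability $1-o(1)$.

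First I bound the deterministic sizes. By hypothesis, $|B_0^{(k)}|+|B_1^{(k)}|<2\delta_k n_k=o(n_k)$. By Markov's inequality on the average degree, $|B_{\mathrm{deg}}^{(k)}(40d_{\rm av})|\le n_k/40$. By Lemma~\ref{lem:large-entry-right-vertices}, $|B_{\mathrm{amp}}^{(k)}(10d_{\rm av})|\le n_k/10$. Their union $U$ therefore has size at most $(\tfrac1{40}+\tfrac1{10}+o(1))n_k=(\tfrac18+o(1))n_k$.

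Second, I control the random intersection. The quantity $Y:=|\widetilde J^{(k)}\cap U|$ is hypergeometric with mean at most $(\tfrac18+o(1))\widetilde n_k$. Since $\widetilde n_k\to\infty$ (because $\varepsilon_k\to0$ and $n_k\to\infty$), the variance bound for sampling without replacement, $\operatorname{Var}Y\le \widetilde n_k/4$, together with Chebyshev gives $Y\le(\tfrac18+\tfrac1{100})\widetilde n_k$ with probability $1-o(1)$.

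Third, I intersect this event with the $1-o(1)$ event from Proposition~\ref{prop:produce-c4-good-subgraph}, which provides $\good{\widetilde J}^{(k)}$ of size at least $\tfrac9{10}\widetilde n_k$ with $G^{(k)}[\good{\widetilde J}^{(k)}]$ $C_4$-free. Setting $J_{\rm sub}^{(k)}:=\good{\widetilde J}^{(k)}\setminus U$ gives
\[
|J_{\rm sub}^{(k)}|\ge\bigl(\tfrac9{10}-\tfrac18-\tfrac1{100}\bigr)\widetilde n_k>\tfrac23\widetilde n_k .
\]

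The main obstacle is only this constant bookkeeping: the amplitude bad set costs a fixed fraction $1/10$ and cannot be made negligible, so the degree cutoff $40d_{\rm av}$ and the concentration margin must fit inside the slack $\tfrac9{10}-\tfrac23$. With the bounds $\tfrac1{40}$ and $\tfrac1{10}$ above, they do fit. The remaining point to check is that $\widetilde n_k\to\infty$, so that concentration applies; this holds since $\widetilde n_k\ge\min(c\varepsilon_k^{-1}k,n_k)$.
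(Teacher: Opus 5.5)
Your proposal is correct and follows the paper's proof: the paper also deletes $B_0^{(k)}\cup B_1^{(k)}\cup B_{\mathrm{deg}}^{(k)}(40d_{\rm av})\cup B_{\mathrm{amp}}^{(k)}(10d_{\rm av})$ from the $C_4$-free good set of Proposition~\ref{prop:produce-c4-good-subgraph}, and it bounds these sets with the same Markov estimate and Lemma~\ref{lem:large-entry-right-vertices}. The only difference is cosmetic: you control $|\widetilde J^{(k)}\cap U|$ via the hypergeometric variance and Chebyshev, whereas the paper uses Hoeffding's comparison with a Chernoff bound and looser constants ($3/20$ and $1/5$); both work because $\widetilde n_k\to\infty$.
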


\begin{remark}[Two-stage exposure]
\label{rem:bridge-two-stage-exposure}
The original uniform choice of $\Omega_k\in\binom{J^{(k)}}{r(k)}$ is realized equivalently in
two stages. First choose
$\widetilde J^{(k)}\in\binom{J^{(k)}}{\widetilde n_k}$ uniformly, and then
choose $\Omega_k\in\binom{\widetilde J^{(k)}}{r(k)}$ uniformly. 
\end{remark}

\begin{proof}
Since the support is nonempty and the average right support degree is at most
 $d_{\rm av}$, the case $d_{\rm av}=0$ is impossible.
Set
\[
B_{\mathrm{deg}}^{(k)}
:=
B_{\mathrm{deg}}^{(k)}(40d_{\rm av})
\quad\mbox{and}\quad
B_{\mathrm{amp}}^{(k)}
:=
B_{\mathrm{amp}}^{(k)}(10d_{\rm av}).
\]
Markov's inequality and the average right support-degree bound give
\[
\left|B_{\mathrm{deg}}^{(k)}\right|
\le
\frac1{40}n_k.
\]
By Lemma~\ref{lem:large-entry-right-vertices},
\[
|B_{\mathrm{amp}}^{(k)}|
\le
\frac1{10}n_k.
\]
The same witness $\varepsilon_k$ defines the four-cycle-bad set
$B_{\mathrm{fc}}^{(k)}$ in \eqref{eq:bridge-deterministic-sets}. Set
\[
B_{\mathrm{trim}}^{(k)}
:=
B_0^{(k)}\cup B_1^{(k)}\cup B_{\mathrm{deg}}^{(k)}
\cup B_{\mathrm{amp}}^{(k)}.
\]
The assumptions on $B_0^{(k)}$ and $B_1^{(k)}$, together with the degree bound
and the large-entry lemma, imply that for all sufficiently large $k$,
\[
|B_{\mathrm{trim}}^{(k)}|
\le\frac3{20} n_k.
\]

Applying Proposition~\ref{prop:produce-c4-good-subgraph} to the support graph of $G^{(k)}$,
with probability $1-o(1)$ there is a set
\[
J_{C_4}^{(k)}\subseteq\widetilde J^{(k)}
\]
such that
\[
|J_{C_4}^{(k)}|\ge\frac9{10}\widetilde n_k,
\qquad
G^{(k)}[J_{C_4}^{(k)}]\text{ is }C_4\text{-free}.
\]
Also, since $|B_{\mathrm{trim}}^{(k)}|\le 3n_k/20$, Hoeffding's comparison theorem for sampling without replacement \cite[Theorem~4]{Hoeffding1963} (see also \cite{BoucheronLugosiMassart2013}) and the binomial Chernoff bound give
\[
|\widetilde J^{(k)}\cap B_{\mathrm{trim}}^{(k)}|
\le
\frac15\widetilde n_k
\]
with probability $1-o(1)$.

On the intersection of these two high-probability events, define
\[
J_{\rm sub}^{(k)}
:=
J_{C_4}^{(k)}
\setminus
B_{\mathrm{trim}}^{(k)}.
\]
Then in fact
\[
|J_{\rm sub}^{(k)}|
\ge
\frac7{10}\widetilde n_k,
\]
and hence $|J_{\rm sub}^{(k)}|\ge(2/3)\widetilde n_k$ for all large $k$. The
remaining structural conclusions follow from the definitions of
$B_{\mathrm{trim}}^{(k)}$ and $J_{C_4}^{(k)}$.
\end{proof}

\begin{proof}[Proof of Proposition~\ref{prop:average-degree-bridge-alternatives}]
Choose $\delta_k\downarrow0$ so that
\[
\delta_k \frac{n_k}{k}\to\infty,
\qquad
\delta_k^2 k\to\infty.
\]
For instance, one can take $\delta_k=\min \left(\frac{n_k}{k},\sqrt k\right)^{-1/2}$.

If $|B_0^{(k)}|\ge\delta_k n_k$, then Lemma~\ref{lem:many-isolated-right-vertices} gives the first case with probability $1-o(1)$. If $|B_1^{(k)}|\ge\delta_k n_k$, then Lemma~\ref{lem:many-degree-one-right-vertices} gives the second case with probability $1-o(1)$.
It remains to consider the complementary case
\[
|B_0^{(k)}|<\delta_k n_k,
\qquad
|B_1^{(k)}|<\delta_k n_k.
\]
In particular, this case implies
\begin{equation}
  \label{eq:average-degree-trichotomy-40d}
40d_{\rm av}\ge2.
\end{equation}
Indeed, if $d_{\rm av}<1/20$, then a standard Markov's inequality argument gives $|B_0^{(k)}|>19n_k/20$, contradicting
$|B_0^{(k)}|<\delta_k n_k$ for all large $k$.

Let $\varepsilon_k\to0$ be the diminishing sequence for the \lsfci{} condtion for $M^{(k)}$.
Set
\[
\widetilde n_k
:=
\min\left\{
\left\lceil c_{\tref{prop:produce-c4-good-subgraph}}\varepsilon_k^{-1} k\right\rceil,
n_k
\right\},
\qquad
\theta_k:=\frac{\widetilde n_k}{k}.
\]
Since $n_k/k\to\infty$ and $\varepsilon_k\to0$, we have
$\theta_k\to\infty$.
Since $r(k)\le k$, this also gives $\widetilde n_k\ge r(k)$ for all
sufficiently large $k$.
Set
\begin{equation}\label{eq: Lk defin}
L_k:=\min\left\{k,\frac{n_k}{k},\frac1{\varepsilon_k}\right\}.
\end{equation}
Then $L_k\to\infty$.

Use the two-stage exposure from Remark~\ref{rem:bridge-two-stage-exposure}, with
$\widetilde n_k=\theta_k k$. The final set $\Omega_k$ still has the
original uniform law on $\binom{J^{(k)}}{r(k)}$. Applying
Lemma~\ref{lem:weighted-subgraph-after-trimming} to the first stage, with probability
$1-o(1)$ there is a set
$J_{\rm sub}^{(k)}\subseteq\widetilde J^{(k)}$ such that
\[
|J_{\rm sub}^{(k)}|\ge \frac23\widetilde n_k,
\]
the graph $G^{(k)}[J_{\rm sub}^{(k)}]$ is $C_4$-free, every right support
degree in this subgraph lies between $2$ and $40d_{\rm av}$, and every nonzero weight in
this subgraph has absolute value at most $10d_{\rm av}$.
Writing
\[
Z:=|\Omega_k\cap J_{\rm sub}^{(k)}|,
\]
we have, that conditionally on the subgraph, $Z$ is a hypergeometric random variable with mean
at least $2r(k)/3$. Hoeffding's comparison theorem \cite[Theorem~4]{Hoeffding1963}
and the binomial Chernoff bound give $$Z\in[r(k)/2,r(k)]$$ with probability
$1-o(1)$. Conditionally on the subgraph and on this value of $Z$, the set
$\Omega_k\cap J_{\rm sub}^{(k)}$ is a uniform $Z$-subset of
$J_{\rm sub}^{(k)}$. It remains to show that such a uniform subset contains a
large \wcmrt{m_k}.

\smallskip
\noindent\emph{Finite-subgraph parameters.}
Let us fix any such realization $J_{\rm sub}^{(k)}$ and
$Z = r_{\rm sub} \in [r(k)/2,r(k)]$. The conditional law of $\Omega_k\cap J_{\rm sub}^{(k)}$ is a subset of size $r_{\rm sub}$ chosen uniformly from $J_{\rm sub}^{(k)}$. We will apply Theorem~\ref{thm:c4-finite-subgraph} to this conditional law, with the subgraph $G^{(k)}[J_{\rm sub}^{(k)}]$ and sample size $r_{\rm sub}$.

Since $J_{\rm sub}^{(k)}$ has cardinality
at least $\frac{2}{3}\theta_k k$, the ratio $\theta_{\rm sub}:=\frac{|J_{\rm sub}^{(k)}|}{k}$ satisfies
\[
\frac{2}{3}\theta_k
\le \theta_{\rm sub}\le \theta_k.
\]
Set
\[
\Lambda_{\rm sub}:=\min\left(r_{\rm sub}/2,\theta_{\rm sub}\right).
\]
The corresponding parameter $K_\star$ in Theorem~\ref{thm:c4-finite-subgraph} is the largest integer $K$ such that
\[
K\log K\le \Lambda_{\rm sub}.
\]
Moreover,
\[
\Lambda_{\rm sub}
\asymp \min\left(r(k)/2,\theta_k\right)
\asymp L_k,
\]
where $\asymp$ denotes comparability up to a constant factor depending only on
$\rho$ and $c_{\tref{prop:produce-c4-good-subgraph}}$.
Consequently,
\[
\log K_\star\asymp\log L_k \rightarrow \infty.
\]

Set $d_\star:=\lfloor40d_{\rm av}\rfloor$. By
\eqref{eq:average-degree-trichotomy-40d}, $d_\star\ge2$, and every right
degree in $G^{(k)}[J_{\rm sub}^{(k)}]$ is at most $d_\star$. From
Remark~\ref{rem:c4-finite-admissible-scale}, applied with $d=d_\star$ and
$\rho_{\rm sub}=r_{\rm sub}/k$, the finite theorem's parameter $m_\star$ is
uniformly bounded below by a quantity comparable to
\[
\frac{\log L_k}{d_{\rm av}^3\log (\tfrac{d_{\rm av}}{\rho}\log L_k)}
\to\infty,
\]
because $\rho_{\rm sub}\ge\rho/2$, and
$\log K_\star\asymp\log L_k$. Hence we may choose a deterministic integer
sequence $m_k\to\infty$ such that, for all relevant finite-subgraph
applications,
\[
m_k\le m_\star(r_{\rm sub},J_{\rm sub}^{(k)}).
\]
Thus, applying Theorem~\ref{thm:c4-finite-subgraph} with $m_k$, the  conditional probability that a uniform $r_{\rm sub}$-subset of $J_{\rm sub}^{(k)}$
contains a \wcmrt{m_k} is at least
\[
1 - \exp(-\sqrt{K_\star})-\frac{32d_\star}{\log K_\star}
= 1-o(1),
\]
for all relevant $r_{\rm sub}$ and subgraph realizations.

Combining this with the conditional distribution of $\Omega_k\cap J_{\rm sub}^{(k)}$, we obtain that the third case occurs with probability $1-o(1)$ in the two-stage construction.
In particular, we conclude that with high probability $\Omega_k$ contains a \wcmrt{m_k} with right support degrees at most $40d_{\rm av}$ and incident nonzero weights bounded by $10d_{\rm av}$.
Since the two-stage construction gives the original uniform law of $\Omega_k$,
the complementary case is proved. Combining the three cases proves the
proposition.

\end{proof}

\section{Proof of Theorem \ref{thm:weighted-local-sparse}}
\label{sec:matrix-theorem}

This section combines the graph cases with the matrix test-vector step.

\begin{proof}[Proof of Theorem~\ref{thm:weighted-local-sparse}]
Denote by $d_{\rm av}$ the uniform upper bound on the average degree.
We can also assume without loss of generality that the
average magnitude of non-zero entries is at most one.
If the support of $M^{(k)}$ is empty along a further subsequence, then
$M^{(k)}_{\Omega_k}$ is the zero matrix and the conclusion is immediate. Thus
we may also assume the support is nonempty.

Let $G^{(k)}=([k],[n_k],M^{(k)})$ be the weighted bipartite graph associated with $M^{(k)}$.

The average column-sparsity bound is exactly the average right support-degree bound for $G^{(k)}$, the weight hypothesis is exactly the average-magnitude hypothesis in Proposition~\ref{prop:average-degree-bridge-alternatives}, and the \lsdoubleoverlap{} is exactly the \lsfci{} condition for its support graph. By Proposition~\ref{prop:average-degree-bridge-alternatives}, there is a sequence $m_k\to\infty$ 
such that, with probability $1-o(1)$, one of the three trichotomy cases occurs for $G^{(k)}[\Omega_k]$.

If the sample contains an isolated right vertex, then $M^{(k)}_{\Omega_k}$ has a zero column, so its least singular value is $0$.

If the sample contains distinct degree-one right vertices $j,j'$ with the same unique left neighbor $i$, then the two sampled columns are both nonzero multiples of $e_i$. Hence they are linearly dependent, so the sampled matrix has a nontrivial kernel and again has least singular value $0$.

It remains to consider the tree case. The \wcmrt{m_k} has right support degrees at most $40d_{\rm av}$ and incident nonzero weights bounded by $10d_{\rm av}$. Thus this tree has $A_{\rm out}\le10d_{\rm av}$, and its third left layer has size at most $(40d_{\rm av})m_k(40d_{\rm av}-1)$. Use the test vector from Proposition~\ref{prop:weighted-tree-small-smin} on the selected tree columns and set all other sampled coordinates to zero. Extra sampled columns outside the tree have coefficient zero. Therefore
\[
s_{\min}\big(M^{(k)}_{\Omega_k}\big)
\le
10d_{\rm av}\sqrt{\frac{(40d_{\rm av})(40d_{\rm av}-1)}{m_k}}
\le
\frac{400d_{\rm av}^2}{\sqrt{m_k}}
=o(1)
\]
with probability $1-o(1)$.

\end{proof}

\begin{remark}[Slowly growing average degree]
\label{rem:slowly-growing-average-degree}
For readability, the main theorems are stated for fixed average degree
$d_{\rm av}$. However, the same proof gives
Proposition~\ref{prop:average-degree-bridge-alternatives} and
Theorem~\ref{thm:weighted-local-sparse} with $d_{\rm av}$ replaced by a sequence
of non-negative integers $d_k$, provided that
\[
d_k^4 
\ll
m_k
\asymp
\frac{\log L_k}{d_k^3\log( \tfrac{d_k}{\rho})\log L_k},
\]
where we need $d_k^4 = o(m_k)$ to ensure that the test vector from Proposition~\ref{prop:weighted-tree-small-smin} gives a vanishing upper bound on the least singular value. In particular, this works when $d_k \le \log^c L_k$ for some small constant $c>0$, where $L_k$ is defined in \eqref{eq: Lk defin}.
\end{remark}

\section{SparseStack and OSI}
\label{sec:sparsestack}

We now apply Theorem~\ref{thm:weighted-local-sparse}, together with
Remark~\ref{rem:slowly-growing-average-degree} for growing sparsity, to the
SparseStack construction from Definition~\ref{def:sparsestack} and the OSI property from Definition~\ref{def:osi}. The
orientation is different from the matrix theorem: SparseStack is an
$n\times k$ matrix $S$, while the theorem applies to $M:=S^\top\in\mathbb R^{k\times n}$.
Thus rows of $S$ become columns of $M$. Restricting $S^\top$ to a fixed set of
$r$ coordinates in $\mathbb R^n$ is the same as selecting $r$ columns of $M$.
The SparseStack rows are exchangeable, so a deterministic coordinate set can
be considered in place of the uniform column sample used in the matrix theorem. The next proposition immediately implies corollary from the introduction. We deliberately consider a more general setting allowing for a growing sparsity parameter:

\begin{prop}[SparseStack satisfies the local sparse double-overlap condition]
\label{prop:sparsestack-double-overlap}

Let $\zeta_k\ge2$ be an integer sequence such that $k$ is
an integer multiple of $\zeta_k$, set
$b(k):=k/\zeta_k$, and let $n_k$ satisfy $n_k/k\to\infty$.
Further, assume that $\zeta_k^4/k\to 0$, and
let $a_k$ be any sequence such that
\[
\varepsilon_k:=a_k\frac{\zeta_k^4}{k}\to0;\quad a_k\to\infty.
\]
Let $S^{(k)}\in\mathbb R^{n_k\times k}$ be a SparseStack matrix with row
sparsity $\zeta_k$ and block size $b(k)$, and define
$M^{(k)}:=(S^{(k)})^\top\in\mathbb R^{k\times n_k}$.
Then the matrix $M^{(k)}$ has
column sparsity $\zeta_k$ and, with probability $1-o(1)$, satisfies
the \lsdoubleoverlap{} with parameter $\varepsilon_k$.

\end{prop}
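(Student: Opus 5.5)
The plan is a direct first-moment computation, followed by two applications of Markov's inequality. The column-sparsity claim is deterministic. Column $j$ of $M^{(k)}$ is row $j$ of $S^{(k)}$. By Definition~\ref{def:sparsestack}, this row has exactly one nonzero entry in each of the $\zeta_k$ blocks, namely $\varrho_{j,t}/\sqrt{\zeta_k}\neq 0$ at position $(t-1)b+p_{j,t}$. So $|\operatorname{supp}M^{(k)}_{\cdot j}|=\zeta_k$ for every $j$.

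\textbf{Pairwise overlap probability.} Fix distinct $j,j'\in[n_k]$. The supports of columns $j$ and $j'$ meet in block $t$ exactly when $p_{j,t}=p_{j',t}$. These events are independent across $t$, each with probability $1/b(k)=\zeta_k/k$. Hence
\[
\mathbb P\bigl(|\operatorname{supp}M^{(k)}_{\cdot j}\cap\operatorname{supp}M^{(k)}_{\cdot j'}|\ge2\bigr)
\le\binom{\zeta_k}{2}\frac{\zeta_k^2}{k^2}\le\frac{\zeta_k^4}{2k^2}.
\]
Let $X_j$ be the number of \doubleoverlappartners{} of $j$. Then $\mathbb E X_j\le n_k\zeta_k^4/(2k^2)$.

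\textbf{Markov, twice.} Call $j$ bad if $X_j>\varepsilon_k n_k/k$. By Markov's inequality,
\[
\mathbb P(j\text{ bad})\le\frac{n_k\zeta_k^4/(2k^2)}{\varepsilon_k n_k/k}=\frac{\zeta_k^4}{2k\varepsilon_k}=\frac1{2a_k}.
\]
Consequently, the bad set $B_k$ satisfies $\mathbb E|B_k|\le n_k/(2a_k)$. A second application of Markov gives $|B_k|\le n_k/\sqrt{a_k}$ with probability at least $1-1/(2\sqrt{a_k})=1-o(1)$. On this event we set $\mathcal G_k:=[n_k]\setminus B_k$. Then $|\mathcal G_k|\ge(1-a_k^{-1/2})n_k=(1-o(1))n_k$, and every $j\in\mathcal G_k$ has at most $\varepsilon_k n_k/k$ double-overlap partners. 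This is exactly the \lsdoubleoverlap{} with parameter $\varepsilon_k$; the definition asks for $\varepsilon_k\searrow0$, which holds (after replacing $\varepsilon_k$ by a monotone majorant if necessary) since $\varepsilon_k\to0$.

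There is no genuine obstacle. The one point needing care is that the random set $\mathcal G_k$ is allowed to depend on the realization; the definition only requires its existence for the realized matrix, so this is fine. The hypothesis $a_k\to\infty$ is exactly what makes both Markov steps vanish, while $\varepsilon_k\to0$ is the separate requirement $\zeta_k^4/k\ll 1/a_k$.
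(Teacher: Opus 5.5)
Your proposal is correct and follows the paper's proof step for step. The steps are the union bound over pairs of blocks, giving $\binom{\zeta_k}{2}b(k)^{-2}\le \zeta_k^4/(2k^2)$, then Markov's inequality for each fixed index $j$, giving $1/(2a_k)$, then a second Markov step on the number of bad indices. Your explicit threshold $n_k/\sqrt{a_k}$, with failure probability $1/(2\sqrt{a_k})$, simply makes quantitative the paper's statement that $Y_k=o(n_k)$ with probability $1-o(1)$.
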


\begin{proof}

Each column of $M^{(k)}$ is the transpose of a row of $S^{(k)}$, so it has
exactly one nonzero in each of the $\zeta_k$ blocks and hence exactly
$\zeta_k$ nonzero entries.

Fix distinct column indices $j,j'\in[n_k]$. The overlap $|\operatorname{supp}M^{(k)}_{\cdot j}\cap \operatorname{supp}M^{(k)}_{\cdot j'}|$ counts the number of blocks in which the corresponding SparseStack rows choose the same coordinate. Therefore
\[
\mathbb P\big(
|\operatorname{supp}M^{(k)}_{\cdot j}\cap \operatorname{supp}M^{(k)}_{\cdot j'}|\ge2
\big)
\le
\binom{\zeta_k}{2}\frac1{b(k)^2}
\le
\frac{\zeta_k^4}{2k^2}.
\]

For each $j\in[n_k]$, let $B_j$ be the number of $j'\ne j$ whose support
intersects the support of $j$ in at least two coordinates. Conditional on the
$j$-th SparseStack row, the rows indexed by $j'\ne j$ remain independent, so
the corresponding indicators are independent and have probability at most
$\zeta_k^4/(2k^2)$. Hence
\[
\mathbb E B_j\le \frac{\zeta_k^4n_k}{2k^2}.
\]
Therefore
\[
\frac{\mathbb E B_j}{\varepsilon_k n_k/k}
\le
\frac1{2a_k}
=o(1)
\]
uniformly in $j$. By Markov's inequality,
$\mathbb P(B_j>\varepsilon_k n_k/k)=o(1)$ uniformly in $j$.

Let $Y_k$ be the number of indices $j$ for which $B_j>\varepsilon_k n_k/k$. Then
$\mathbb E Y_k=o(n_k)$, so $Y_k=o(n_k)$ with probability $1-o(1)$. On this event, the set
\[
\mathcal G_k:=
\left\{
j\in[n_k]:
B_j\le\varepsilon_k\frac{n_k}k
\right\}
\]
has cardinality $(1-o(1))n_k$, and every $j\in\mathcal G_k$ has at most $\varepsilon_k n_k/k$ \doubleoverlappartners{}. This is exactly the \lsdoubleoverlap{}.

\end{proof}

Observe that, for any constant sparsity parameter $\zeta:=\zeta_k$,
and in the regime $n_k/k\to\infty$, the above proposition
shows that the transpose of the SparseStack matrix model satisfies
all the assumptions of Theorem~\ref{thm:weighted-local-sparse} with high probability, and Corollary~\ref{cor:sparsestack-no-osi} follows.
Moreover, combined with a quantitative version of the main theorem
(see Remark~\ref{rem:slowly-growing-average-degree}),
it gives the following result where the sparsity parameter
is allowed to grow with $k$ as a small power of $\log\min(n_k/k,k))$, assuming 
polynomial-order aspect ratio:

\begin{prop}[SparseStack with a polylogarithmic row sparsity]
\label{prop:sparsestack-fails-osi}
There is a universal constant $c>0$ with the following property.
Let $\zeta_k\ge2$ be an integer sequence with $\zeta_k\mid k$, set
$b(k):=k/\zeta_k$, and let $r=r(k)$ and $n_k$ satisfy
\[
\rho k\le r(k)\le k
\quad\text{for some fixed }\rho\in(0,1],
\qquad
\frac{n_k}{r(k)}\to\infty.
\]
Set
\[
s_k:=\min\left\{k,\frac{n_k}{k}\right\},
\]
and assume
\[
\zeta_k=O\big(\log s_k\big)^c.
\]
For each such $k$, let $S^{(k)}\in\mathbb R^{n_k\times k}$ be a SparseStack
matrix with row sparsity $\zeta_k$ and block size $b(k)$. Let
$T_k\subseteq[n_k]$ be any deterministic subset with $|T_k|=r(k)$.
Then
\[
s_{\min}\left(((S^{(k)})^\top)_{T_k}\right)=o(1)
\]
with probability $1-o(1)$. In particular, 
the SparseStack with the given parameters
is not $(r(k),1/2)$-OSI.

\end{prop}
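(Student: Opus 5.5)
Since the rows of $S^{(k)}$ are i.i.d., the law of $((S^{(k)})^\top)_{T_k}$ does not depend on which deterministic $T_k$ of size $r(k)$ is chosen. It therefore equals the law of $M^{(k)}_{\Omega_k}$, where $M^{(k)}=(S^{(k)})^\top$ and $\Omega_k$ is a uniform $r(k)$-subset of $[n_k]$ drawn independently of $S^{(k)}$. So I would condition on $S^{(k)}$ and apply the deterministic machinery with a growing degree, as in Remark~\ref{rem:slowly-growing-average-degree}. The OSI conclusion then follows by taking $V$ to be the coordinate subspace spanned by $T_k$. For $x\in V$ we have $\|S^\top x\|_2=\|((S^{(k)})^\top)_{T_k}x_{T_k}\|_2$, so a smallest singular value $o(1)$ with probability $1-o(1)$ violates injectivity with $\alpha=1/2$ for large $k$.

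\emph{Step 1: the deterministic hypotheses hold with high probability.} Every column of $M^{(k)}$ has exactly $\zeta_k$ nonzeros, so $d_k=\zeta_k$. All nonzero entries have magnitude $\zeta_k^{-1/2}\le1$. Since Proposition~\ref{prop:weighted-tree-small-smin} is homogeneous, we may rescale the entries to $\pm1$ and then rescale back. The rescaled entries satisfy the average-magnitude hypothesis, and the bound $A_{\rm out}\le1$ holds directly, so the cutoff $10d_{\rm av}$ costs nothing. For the overlap condition, the hypothesis $n_k/r(k)\to\infty$ gives $n_k/k\to\infty$. Because $\zeta_k$ is polylogarithmic in $k$, we have $\zeta_k^4/k\to0$, and we can choose $a_k\to\infty$ slowly, for instance $a_k=\log k$. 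Proposition~\ref{prop:sparsestack-double-overlap} then gives local sparse double-overlap with $\varepsilon_k=a_k\zeta_k^4/k$, with probability $1-o(1)$. On this event the support graph has \lsfci{}.

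\emph{Step 2: run the trichotomy with growing degree.} All right degrees equal $\zeta_k\ge2$, so $B_0$ and $B_1$ are empty and cases (1) and (2) of Proposition~\ref{prop:average-degree-bridge-alternatives} cannot occur. The same holds for $B_{\rm deg}$ and $B_{\rm amp}$ once the degree cutoff is taken to be $\zeta_k$ itself rather than $40d_{\rm av}$. Lemma~\ref{lem:weighted-subgraph-after-trimming} therefore reduces to Proposition~\ref{prop:produce-c4-good-subgraph}, which gives a $C_4$-free subgraph $J_{\rm sub}$ whose right degrees are exactly $\zeta_k$. Applying Theorem~\ref{thm:c4-finite-subgraph} with $d=\zeta_k$ gives a \wcmrt{m_k} in the sample with
\[
m_k\asymp\frac{\log L_k}{\zeta_k^3\log\bigl(\tfrac{\zeta_k}{\rho}\log L_k\bigr)},\qquad L_k=\min\{k,n_k/k,\varepsilon_k^{-1}\}.
\]
The failure probability is $\exp(-\sqrt{K_\star})+32\zeta_k/\log K_\star$. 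Here $\log K_\star\asymp\log L_k$. Also $\varepsilon_k^{-1}=k/(a_k\zeta_k^4)\ge k^{1/2}$, so $\log L_k\gtrsim\log s_k$. The error term $32\zeta_k/\log L_k$ therefore tends to $0$ as long as $c<1$.

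\emph{Step 3: the quantitative bound, which is the main obstacle.} Proposition~\ref{prop:weighted-tree-small-smin} gives $\smin\le\sqrt{\zeta_k(\zeta_k-1)/m_k}$, so we need $\zeta_k^2=o(m_k)$. It suffices to have $\zeta_k^5\log(\zeta_k\log s_k/\rho)=o(\log s_k)$, which holds for $\zeta_k=O((\log s_k)^c)$ with, say, $c=1/6$. The delicate part is verifying that the admissibility condition of Theorem~\ref{thm:c4-finite-subgraph} still holds with $d=\zeta_k$ growing. Concretely, we need $(C\zeta_k^4m_k/\rho)^{C\zeta_k^3m_k}\le K_\star$ uniformly over all realizations of $r_{\rm sub}$ and $J_{\rm sub}$, together with the requirement $\theta\ge32d\beta_d$ from Step~1 of that proof. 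I would handle this by choosing $m_k$ as a fixed small fraction of the Remark~\ref{rem:c4-finite-admissible-scale} value computed with the worst-case parameters $\rho/2$ and $\tfrac23\theta_k$. That choice absorbs all constants, and $m_k\to\infty$ remains true by the estimate above.
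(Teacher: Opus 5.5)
Your proposal is correct and follows the route the paper indicates: exchangeability of the SparseStack rows, Proposition~\ref{prop:sparsestack-double-overlap}, and the growing-degree version of the trichotomy from Remark~\ref{rem:slowly-growing-average-degree}; using the exact degree $\zeta_k$ and the $\pm1$ rescaling only sharpens the needed condition from $d_k^4=o(m_k)$ to $\zeta_k^2=o(m_k)$. When conditioning on $S^{(k)}$, fix explicit rates (e.g.\ $|B_{\mathrm{fc}}^{(k)}|\le n_k/\sqrt{a_k}$ off an event of probability $O(a_k^{-1/2})$) so the deterministic bounds hold uniformly over good realizations, and note that $\theta\ge 32d\beta_d$ is already implied by admissibility, as in Step~1 of the proof of Theorem~\ref{thm:c4-finite-subgraph}.
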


We leave proof details to the interested reader.

\section{Open Problems}

The main structural assumption in this note controls double overlaps between typical pairs of columns. The most natural question is whether this assumption is actually needed.

\begin{conjecture}[Unrestricted deterministic sparse sketches]
Fix an integer $d \ge 2$ and a constant $C>0$. Let
$r(k),n_k\in\N$
satisfy
\[
    \frac{k}{r(k)}\to C,
    \qquad
    \frac{n_k}{r(k)}\to\infty.
\]
For each $k$, let
\[
    M^{(k)}\in\{-1,0,1\}^{k\times n_k}
\]
be a deterministic matrix whose every column has exactly $d$ nonzero entries. If
$\Omega_k\subset[n_k]$
is chosen uniformly from $\binom{[n_k]}{r(k)}$,
then
\[
    \smin\bigl(M^{(k)}_{\Omega_k}\bigr)=o(1)
\]
with probability $1-o(1)$.
\end{conjecture}

Our proof does not address the above conjecture
since it uses a locally tree-like cancellation pattern built from typical columns having very few \doubleoverlappartners{}. 
However, we expect that double overlaps cannot create an obstruction to vanishing $\smin$.

Another question is to obtain optimal interdependence between the sparsity parameter (number of non-zero elements per column) and the magnitude of the smallest singular value. The question is open even in the setting of random matrices:
\begin{problem}[Optimal OSI sparsity for combinatorial random matrices]
Assume that $n_k\geq k^{1+c}$, for a small positive constant $c>0$.
For each $k$ let $M^{(k)}$
be a chosen uniformly among
$\{-1/\sqrt{d_k},0,+1/\sqrt{d_k}\}$--matrices of dimension $k\times n_k$, having $d_k$ non-zero elements in every column.
What are optimal necessary and sufficient conditions on 
the growth of sequence $d_k$
to guarantee that uniform random $k\times k/100$ submatrix
of $M^{(k)}$ has $\smin$ of order $\Omega(1)$ with high probability?
\end{problem}

In regard to the last problem, it is natural to expect 
that there is a critical power $\alpha_*>0$ such that whenever
$d_k=O(\log^{\alpha_*-\varepsilon})$ for some $\varepsilon>0$,
the uniform random $k\times k/100$ submatrix of $M^{(k)}$ is poorly invertible,
whereas for $d_k=\Omega(\log^{\alpha_*+\varepsilon})$
it is well invertible (has constant $\smin$) with high probability.
We further expect that the parameter $\alpha_*$ is the same
for the above model of randomness and for SparseStack$^\top$.
Determining the exact value of $\alpha_*$
is an intriguing open question.

\end{document}